\DeclareMathOperator{\erf}{erf}
\DeclareMathOperator{\csch}{csch}
\DeclareMathOperator{\sech}{sech}
\DeclareMathOperator{\Law}{Law}
\DeclareMathOperator{\Var}{Var}
\DeclareMathOperator{\SEQ}{SEQ}
\newtheorem{lemma}{Lemma}
\newtheorem{corollary}{Corollary}
\newtheorem{proposition}{Proposition}
\newtheorem{remark}{Remark}
\newtheorem{theorem}{Theorem}
\newtheorem{definition}{Definition}
\begin{document}

\title{On binomial order avalanches}
\author[1]{Friedrich Hubalek\thanks{Financial and Actuarial Mathematics, Vienna University of Technology,
Wiedner Hauptstr. 8 / 105-01 \& -05 FAM,
1040 Vienna, Austria.
(fhubalek@fam.tuwien.ac.at)}}
\author[1]{Dragana Radoji\v ci\' c\thanks{Financial and Actuarial Mathematics, Vienna University of Technology,
Wiedner Hauptstr. 8 / 105-01 \& -05 FAM,
1040 Vienna, Austria.
(dragana@fam.tuwien.ac.at)}}

\affil[1]{TU Wien,  Austria}

\maketitle

\abstract{This paper introduces a discrete limit order book model where new orders
are placed with a fixed displacement from the mid-price. Further, the trade event occurs whenever the mid-price hits the price level on which there is some volume.  Therefore, the dynamics of the limit order book model leads to two trading mechanisms, namely Type~I trade and Type~II trade.   A Type~I trade takes place whenever the price maximum increases, while a  Type~II trade occurs if the price drops by~$\mu$ or more and
then increases by~$\mu$ again. 
 Our focus is mainly on the distribution of order avalanches length, and by an avalanche length we consider a series  of order executions where the  length of periods with no trade event cannot exceed $\varepsilon$. }

\section{Introduction}
\label{sec:intro}

The emergence of automated high frequency trading in the absence of a Tobin
tax, i.e. a microscopic but nevertheless non-zero tax on financial
transactions, has put financial markets recently at times into high distress.
This is highlighted by the large scale occurrences of \emph{flash crashes},
which are fast and deep falls in security prices, together with an as rapid
upward movement to previous levels.  
Among the most notable of those events, it is worth to mention the one
occurring on 6th of May 2010 when the Dow Jones incurred a loss of 9.5\% just
to recover within 15 minutes (see \cite{kirilenko2017flash}); the April 2013 flash crash which removed about
136 billions of dollars from the S\&P 500, only to be regained within minutes;
as well as the Singapore 2013 flash crash which momentarily wiped out about
6.9 billion dollars of capitalization. 
There is an ongoing debate about the
causes and responsibilities of players involved. However, we are not entering
this discussion which is intensely debated and where there are plenty of
sources easily to be found out, but rather study distributional properties of
this phenomenon.

The interest into \emph{order avalanches} is motivated by the
theory of self-organized criticality (SOC), see \cite{SC}. 

Our study is targeting an avalanche object in the specific context of financial
markets.

In recent years, there has been increasing interest in studies of the dynamics of the limit order book, both from the theoretical and practical point of view, see \cite{abergel2016limit}.  Modeling the dynamics of the order book as an interacting  Markovian queueing system is exhibited in \cite{CL}, and the analytical tractability of the model provides the relation between order flow and price dynamics.  

The stochastic auction model in which buy and sell orders came at the auction by following independent renewal processes is studied in \cite{K}, and the number of present orders are modeled by stochastic processes and further limiting distributions are investigated. Delattre et
al. \cite{dLRR} investigate the efficient price that can be employed in practice, and also they developed price statistical estimation using the order flow.

 The authors  in \cite{abergel2013mathematical} investigated the order book model as a multidimensional continuous-time Markov chain, and further using the functional central limit theorem authors showed that the rescaled price process converges to Brownian motion.  In  \cite{BHQ} the authors investigate a stochastic LOB model that converges to a continuous-time limit, and the limits of the buy and sell volume densities are modeled by stochastic partial differential equations coupled with a two-dimensional Brownian motion.  
  A model in which limit order book dynamics depend on the available prices and volume that corresponds to the current prices is introduced and studied in \cite{horst2017weak}. Especially, in \cite{horst2017weak} the authors concluded that when the order size  
 and tick size is converging to zero, by a weak law of large numbers, the volume densities tend to non-linear PDEs, that are coupled with non-linear ODEs, which correspond to the best ask and best bid price.

The structure of the paper is as follows. 
 Section~\ref{SectionModel} introduces a discrete limit order book model and a key quantity of our research, namely avalanche length. Further, in Section~\ref{sec:simplified}  the probability generating function  for the simplified avalanche length is established. Section~\ref{sec:more} contains limit results for the full avalanche length and the generating function for the full avalanche length. The generating function of the time to the first trade, if we start with the initially empty book, is presented in Section~\ref{SectionInitiallyEmpty}.

\section{A simple limit order book model with discrete time and space}\label{SectionModel}
\label{sec:simple}
In \cite{DDW,R,Sp} a simple order book model driven by arithmetic Brownian
motion has been studied. These studies lead to interesting known and new
Brownian path functionals involving Brownian local time and the theory of
Brownian excursions. In the past it has been shown that such results can be
illustrated, found, and often even proven by limit arguments with or from
corresponding results for random walks.

For example, let us refer to
\cite{Ver1979,Tak1995,Tak1999,Csa1994,CR1992,CH2004,LH2007,CM1986,BCP2003} and
in particular \cite{SD1988,CH2003,PW2014}. 

Further (possibly) related
references are \cite{Ald1998,Foe1994,DT1996,Csa1996,LM2007}.

In this section we follow this approach and define a simple order book process
driven by a simple symmetric random walk.
\subsection{The basic setting}
\label{subsec:basic}
We start with a probability space $(\Omega,\mathcal{F},P)$ that carries a
simple symmetric random walk $\{S_{n}:n\geq0\}$. This process is interpreted
as the \emph{mid price}, i.e. the arithmetic average between best bid and ask
price. See also \cite{dLRR} for a general discussion of various price
concepts, or rather \emph{price substitutes} in the context of order book modelling.

For better readability we use mixed index and function notation freely, i.e.
is $S_{k}$ and $S(k)$ means the same.

For simplicity we focus on the the ask-side of the order book and ignore order
cancellations. We fix an integer spread parameter $\mu\geq1$. Further, let us introduce the
ask order volume process $\{V(n,u):n\geq0,u\in\mathbb{Z}\}$, which is a
two-parameter process and $V(n,u)$ denotes the volume of order at time $n$
and price $u$. As a further simplification, we ignore the size of the order
volume, and distinguish only the cases $V(n,u)>0$, if there is an order, or
$V(n,u)=0$, when there is no order in the book.

We assume, that we start from an order book that is initially full, which is
convenient for our analysis. So this means
\begin{equation}
V(0,u)=I_{u\geq0},\quad u\in\mathbb{Z}. \label{V0}%
\end{equation}
Later we will consider an order book that is
initially empty.\footnote{Some exchanges actually clear the order book before
trading starts on a new day, e.g. the Istanbul Stock Exchange, see \cite{VZFR}
for a statistical analysis.}

The dynamic development of the order book is described as follows. If there
there is an order at time $n$ at the level $u=S_{n}$, indicated by
$V(n,S_{n})>0$, it is executed and the corresponding entry is removed from the
order book. Thus at the next time step $V(n+1,S_{n})=0$. This is also true, if
there was no order, indicated by $V(n,S_{n})=0$. Furthermore we assume a new
order will be placed at distance $\mu$ above the price $S_{n}$, thus
$V(n,S_{n}+\mu)=V(n-1,S_{n}+\mu)+1$. To summarize, we have
\begin{equation}\label{dynamics}
V(n,u)=\left\{
\begin{array}
[c]{ll}%
0 & \mbox{if $u=S_{n-1}$}\\
V(n-1,u)+1 & \mbox{if $u=S_{n-1}+\mu$}\\
V(n-1,u) & \mbox{otherwise,}
\end{array}
\right.  \qquad n\geq1,u\in\mathbb{Z}.
\end{equation}

\subsection{Type I and Type II trades, trading times and inter-trading times}
\label{subsec:trades}
A trade event occurs at time $n$ if the mid-price reaches the price level $U$, i.e. $U=S_n$, and there is some volume at  the price level $U$, i.e. if $V(n,S_{n})>0$. 
\begin{definition}

We define trading
times $\{\tau_{i}:i\geq0\}$ and intertrading times $(T_{i})_{i\geq1}$  by
\begin{equation}
\tau_{0}=0,\quad\tau_{i}=\inf\{n>\tau_{i-1}:V(n,S_{n})>0\},\quad T_{i}%
=\tau_{i}-\tau_{i-1},\quad i\geq1.
\end{equation}
\end{definition}
 
It is quite intuitive that typically a trade on the ask-side occurs, if the
midprice moves up. We formalize this as follows. For $i\geq1$ we say
the $i$-th trade is a Type~I trade, if $S(\tau_{i})>S(\tau_{i-1})$. Otherwise
we call the trade a Type~II trade. igure~\ref{FigTypeI} displays an example for a typical
Type~I trade.  \begin{figure}[ptb]
\begin{center}
\includegraphics[width=0.5\textwidth,height=0.37\textheight]{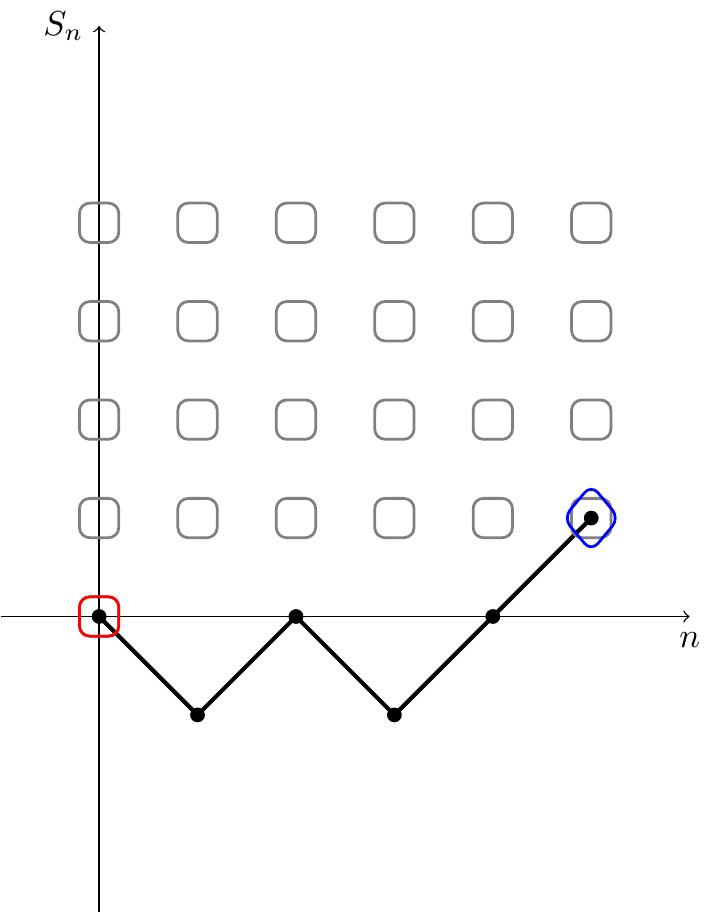}\newline%
\caption{Type I trade (depicted with the blue rhombus) example.}%
\label{FigTypeI}%
\end{center}
\end{figure}A Type II trade occurs, if the price drops by~$\mu$ or more, and
then increases by~$\mu$ or more again. If this happens in a very short time
interval, say of length less than~$\varepsilon>0$, we say, we have a
\emph{flash-crash} trade. Figure~\ref{FigTypeII} displays an example for a typical
 Type~II trade (blue rombs depict Type~I trades, while red rectangles depict Type II trades). Intuitively, Type~II trades, and in particular, flash-crash trades
are less frequent than Type~I trades. Note that the first trade that occurs at the initial position (at the level $u=0$) at the starting time ($n=0$), we define as a Type~II trade. 
\begin{remark}
Note that all pictures included in this paper are generated by the program that simulate the dynamics summarized in equation (\ref{dynamics}).
\end{remark}
A sufficient condition for a Type~I trade to occur is given by the following
easy lemma.

\begin{lemma}\label{EasyI}
If $n\geq1$ is a strict ascending ladder time of the random walk, then there
is a Type~I trade at $n$.
\end{lemma}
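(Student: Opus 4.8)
The plan is to prove the statement in two stages: first to show that a \emph{trade event} occurs at $n$, i.e.\ $V(n,S_n)>0$, so that $n$ coincides with one of the trading times $\tau_i$; and second to verify that this trade is of Type~I, i.e.\ $S(\tau_i)>S(\tau_{i-1})$. Everything will hinge on the defining property of a strict ascending ladder time, namely $S_n>S_m$ for all $0\le m<n$. In particular, since the increments are $\pm1$ we have $S_n=S_{n-1}+1$, and since $S_n>S_0=0$ we have $S_n\ge1$.

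The key step is to establish that $V(n-1,S_n)\ge1$, and here I would track the volume at the fixed level $u:=S_n$. It starts from $V(0,u)=1$ because $u\ge1\ge0$. Inspecting the dynamics~\eqref{dynamics}, the volume at level $u$ can only be reset to $0$ at a time $k$ when $S_{k-1}=u$; in every other case it is either incremented (when $S_{k-1}=u-\mu$) or left unchanged. But $u=S_n$ is a strict new maximum, so $S_j<u$ for every $j<n$, and hence the resetting event $S_{k-1}=u$ cannot occur for any $k\le n$. Consequently the volume at level $u$ is never cleared up to time $n-1$, giving $V(n-1,S_n)\ge V(0,S_n)=1$.

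From this a trade event at $n$ follows immediately. Applying~\eqref{dynamics} at level $u=S_n$ and time $n$, the resetting case is again excluded (since $S_{n-1}=S_n$ is impossible), so $V(n,S_n)$ equals $V(n-1,S_n)+1$ when $\mu=1$ and $V(n-1,S_n)$ when $\mu\ge2$; in both cases $V(n,S_n)\ge V(n-1,S_n)\ge1>0$. Because the times $\{\tau_i:i\ge1\}$ enumerate precisely the instants $k\ge1$ with $V(k,S_k)>0$, we conclude $n=\tau_i$ for some $i\ge1$.

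Finally I would read off the Type~I property: the previous trading time satisfies $\tau_{i-1}<\tau_i=n$, so the ladder property yields $S(\tau_{i-1})<S_n=S(\tau_i)$, which also covers the boundary case $\tau_{i-1}=\tau_0=0$ where $S(\tau_0)=0<S_n$. Hence $S(\tau_i)>S(\tau_{i-1})$ and the trade is of Type~I. The only genuinely delicate point is the bookkeeping in the second step: one must notice that the strict maximum condition prevents the price from ever having occupied level $S_n$ before time $n$, which is exactly what protects the initial unit of volume sitting there from being consumed earlier; once this is observed, the rest is a direct reading of~\eqref{dynamics}.
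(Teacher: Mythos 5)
Your proof is correct and follows essentially the same route as the paper's: the initial unit of volume at level $S_n$ survives until time $n$ because the strict-maximum property means the walk never visited that level before, hence $V(n,S_n)>0$, and the ladder property then forces $S(\tau_i)>S(\tau_{i-1})$. The paper states this more tersely ("there was no trade at level $S_n$ or higher before $n$"), whereas you make the bookkeeping through the dynamics explicit, but the underlying argument is identical.
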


\begin{proof}
Suppose $n\geq 1$ is a strict ascending ladder time.
Then, we know that

$S_n>\max(S_0,\ldots,S_{n-1}) \geq 0$
and there was no trade at level $S_n$ or higher before $n$. Since we assume $V(0,u)>0$ for $u\geq0$
it follows $V(n,S_n)>0$, thus there is a trade, and at a price level which is
higher than the level of the last trade.
\end{proof}

\begin{figure}[ptb]
\begin{center}
\includegraphics[width=0.5\textwidth,height=0.4\textheight]{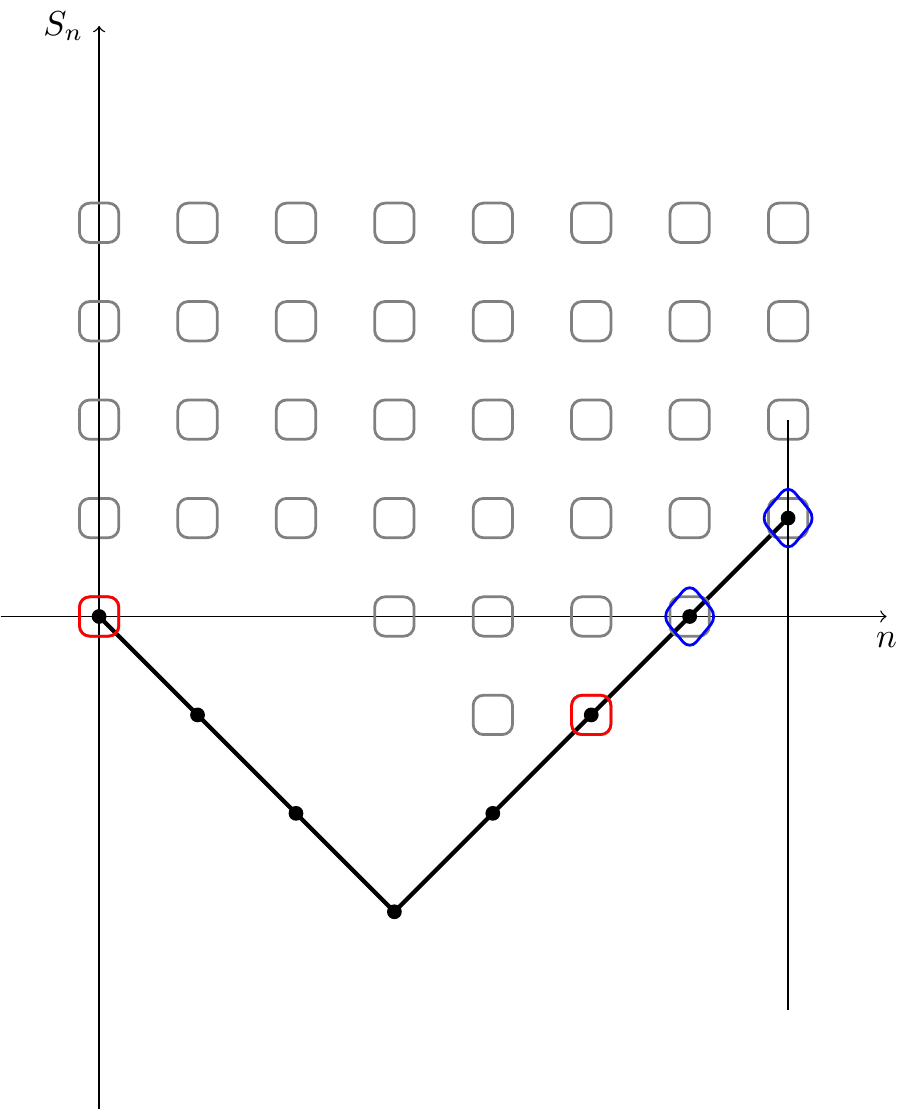}\newline%
\caption{Example of a Type II trade (depicted with the red squares), that is followed by two consecutive Type I trades (depicted with the blue rhombus).}%
\label{FigTypeII}%
\end{center}
\end{figure}

Motivated by this lemma, and also in agreement with \cite{DDW}, we
introduce the \emph{simplified trading times} and the \emph{simplified intratrading times}, that are obtained by ignoring
Type~II trades, by following definition.
\begin{definition}\label{defSimplified}
Define the simplified trading times $\{\rho_{i}:i\geq0\}$ and the
simplified intrading times $(R_{i})_{i\geq1}$  as
\begin{equation}
\rho_{0}=0,\quad\rho_{i}=\inf\{n>\rho_{i-1}:S_{n}>S(\rho_{i-1})\},\quad
R_{i}=\rho_{i}-\rho_{i-1},\quad i\geq1.
\end{equation}
\end{definition}

\begin{figure}[h]
\begin{center}
\includegraphics[width=0.5\textwidth,height=0.4\textheight]{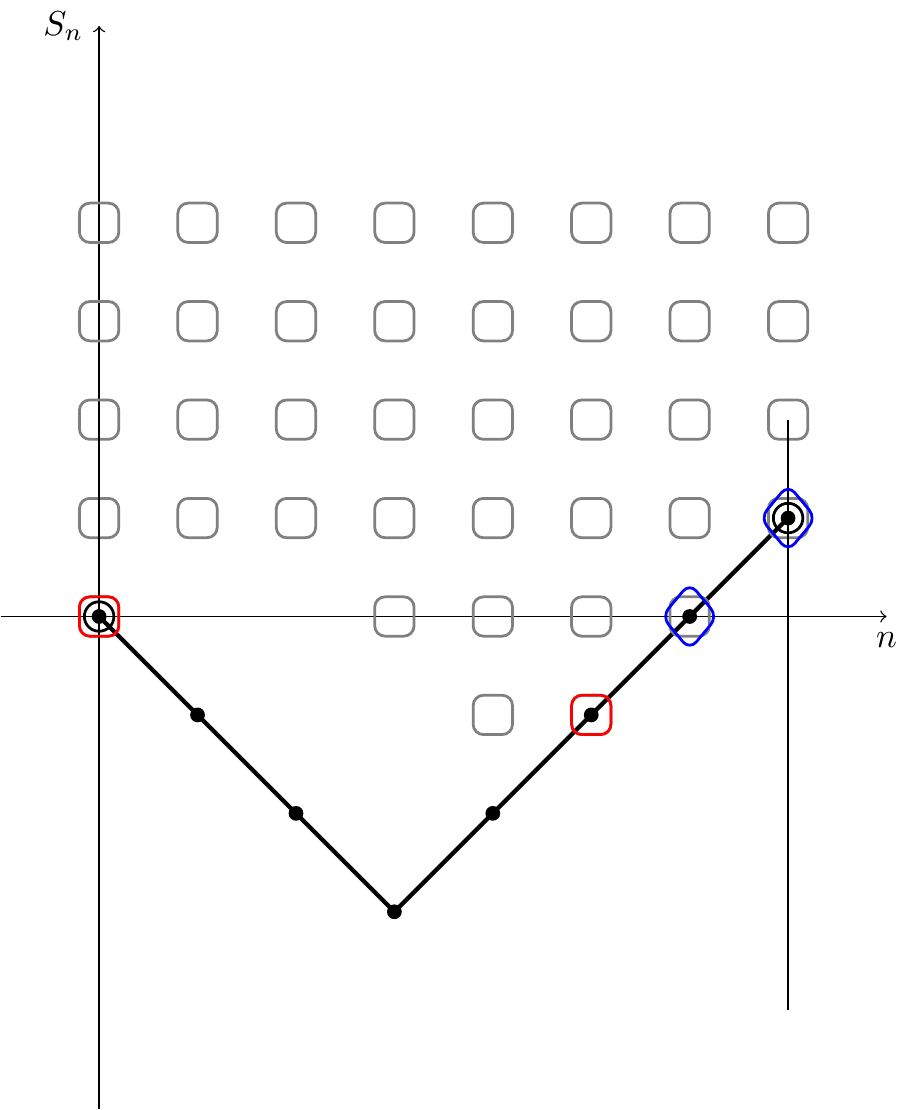}\newline%
\caption{Black circles depict trades that occur at the simplified trading times.}%
\label{FigSimpl}%
\end{center}
\end{figure}

\begin{remark}
Note that $\{\rho_{j}:j\geq0\}\subset \{\tau_{i}:i\geq0\}$ and that with Definition \ref{defSimplified} we are ignoring Type~II trades, but we may also ignore some Type~I trades that
occur after a Type~II trade (see Figure \ref{FigSimpl} for an illustration, in which simplified trading times are depicted by circles). In Figure 3 we can see Type~II trade which is followed with two consecutive Type~I trades, of which first Type~I trade is ignored by Definition \ref{defSimplified}. 
\end{remark}

\subsection{On the best ask price}
\label{subsec:best}
Our next goal is to show that the best ask price can be described by the price
process without reference to the full order volume process. For that purpose
we define the best ask price process ${\{\alpha_{n}\}}_{n\geq 0}$ by
\begin{equation}\label{bestAsk}
\alpha_{0}=0,\quad\alpha_{n+1}=\alpha_{n}+I_{\{\alpha_{n}=S_{n}\}}%
-I_{\{\alpha_{n}=S_{n}+\mu+1\}},\quad n\geq0.
\end{equation}

Equation (\ref{bestAsk}) can be interpreted as follows: if $\alpha_{n}=S_{n}$ the midprice hits the best ask, and therefore the next best ask is higher than the previous one; if $\alpha_{n}=S_{n}+\mu+1$ the order is placed is the previous best ask and the next best ask is lower than the previous one. 

\begin{lemma}
We have
\begin{equation}
S_{n}\leq\alpha_{n}\leq S_{n}+\mu+1,\quad n\geq0.
\end{equation}
\end{lemma}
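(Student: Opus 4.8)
The plan is to prove both inequalities simultaneously by induction on $n$, exploiting the fact that $\alpha_n$ is integer-valued and that both $S_n$ and $\alpha_n$ move by a bounded amount in each step. For the base case $n=0$ we have $S_0=0=\alpha_0$, and since $\mu\geq 1$ the chain $0=S_0\leq\alpha_0=0\leq \mu+1=S_0+\mu+1$ holds.

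For the inductive step I would assume $S_n\leq\alpha_n\leq S_n+\mu+1$ and split according to where $\alpha_n$ sits inside this band. Observe first that the two indicator conditions $\alpha_n=S_n$ and $\alpha_n=S_n+\mu+1$ are mutually exclusive (simultaneous occurrence would force $\mu+1=0$), so the recursion~(\ref{bestAsk}) moves $\alpha_n$ by $+1$, $-1$, or $0$ in exactly the three cases: $\alpha_n=S_n$ (lower edge), $\alpha_n=S_n+\mu+1$ (upper edge), and $S_n<\alpha_n<S_n+\mu+1$ (interior, which for integers means $S_n+1\leq\alpha_n\leq S_n+\mu$). In each case I would combine $\alpha_{n+1}$ with the fact that $S_{n+1}=S_n\pm 1$ and verify the two resulting directional sub-cases.

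The informative point, and the reason the band has width $\mu+1$ rather than something smaller, is that the edge cases are exactly tight. For instance, if $\alpha_n=S_n$ then $\alpha_{n+1}=S_n+1$, and a downward move $S_{n+1}=S_n-1$ gives $\alpha_{n+1}=S_{n+1}+2$; this still satisfies the upper bound precisely because $2\leq\mu+1$. Symmetrically, if $\alpha_n=S_n+\mu+1$ then $\alpha_{n+1}=S_n+\mu$, and an upward move gives $\alpha_{n+1}=S_{n+1}+\mu-1$, which respects the lower bound exactly because $\mu-1\geq 0$. The interior case is immediate: $\alpha_{n+1}=\alpha_n$ stays put while $S_{n+1}$ shifts by one, and the slack $S_n+1\leq\alpha_n\leq S_n+\mu$ leaves room on both sides.

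I do not expect a genuine obstacle here; the computation is routine once the case split is fixed. The only thing to be careful about is the book-keeping in the boundary sub-cases, where one must check that the standing hypothesis $\mu\geq 1$ is used (and, as the tight edge estimates show, is in fact necessary). Writing out the six sub-cases — three positions of $\alpha_n$ times two directions of the walk — and confirming each satisfies both inequalities completes the induction.
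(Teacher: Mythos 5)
Your proof is correct and follows exactly the same route as the paper: induction on $n$ with the six sub-cases given by the three positions of $\alpha_n$ in the band crossed with the two directions of the walk. You additionally spell out the tight boundary estimates showing where $\mu\geq 1$ is used, which the paper leaves implicit.
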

\begin{proof}
By using induction on $n$ and distinguishing for the step $n\mapsto n+1$ the six cases corresponding
to: $\alpha_n=S_n$ and $S_{n+1}=S_n\pm1$, $\alpha_n=S_n+\mu+1$ and $S_{n+1}=S_n\pm1$, $S_n<\alpha_n<S_n+\mu+1$ and $S_{n+1}=S_n\pm1$.
\end{proof}
\begin{lemma}
We have for all $n\geq0$ that $V(n,u)>0$ iff $u\geq\alpha_{n}$.
\end{lemma}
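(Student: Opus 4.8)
The plan is to proceed by induction on $n$, after first restating the claim in a more geometric form. The assertion ``$V(n,u)>0 \iff u\ge\alpha_n$'' is equivalent to saying that the set of occupied price levels $\{u\in\mathbb{Z}:V(n,u)>0\}$ is exactly the integer half-line $[\alpha_n,\infty)$. Phrasing the induction hypothesis this way makes transparent what must be preserved at each step: that the occupied set remains a half-line and that its left endpoint evolves according to the recursion \eqref{bestAsk}. The base case $n=0$ is immediate, since by \eqref{V0} we have $V(0,u)>0 \iff u\ge0$ and $\alpha_0=0$, so the occupied set is $[0,\infty)$ as required.

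For the inductive step I would assume the occupied set at time $n$ equals $[\alpha_n,\infty)$ and track the effect of the two level-updates prescribed by \eqref{dynamics}: the level $u=S_n$ is reset to $0$, the level $u=S_n+\mu$ is incremented, and every other level is left unchanged. The decisive ingredient is the preceding lemma, which guarantees $S_n\le\alpha_n\le S_n+\mu+1$, so that the two updated levels always sit in a controlled position relative to the current threshold $\alpha_n$, matching exactly the two indicators appearing in \eqref{bestAsk}.

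Concretely I would split into the three cases singled out by those indicators. If $\alpha_n=S_n$, then $S_n$ was occupied and is now vacated, while $S_n+\mu>\alpha_n$ stays occupied, so the left endpoint moves up by one to $S_n+1=\alpha_{n+1}$. If $\alpha_n=S_n+\mu+1$, then $S_n$ was already vacant and stays so, while $S_n+\mu=\alpha_n-1$ was vacant and is now occupied, so the endpoint moves down by one to $S_n+\mu=\alpha_{n+1}$. In the remaining case $S_n<\alpha_n<S_n+\mu+1$ both indicators vanish: the level $S_n$ lies strictly below the threshold (already vacant) and the level $S_n+\mu\ge\alpha_n$ lies at or above it (already occupied and merely reinforced), so the occupied set is unchanged and $\alpha_{n+1}=\alpha_n$. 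In each case the occupied set after the update is again a half-line with left endpoint $\alpha_{n+1}$, which closes the induction.

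The only genuine subtlety, and the step I would treat most carefully, is verifying that vacating $S_n$ in the first case and filling $S_n+\mu$ in the second case do not create a gap inside the occupied region. This is precisely where the bounds $S_n\le\alpha_n\le S_n+\mu+1$ are used: they force the vacated point to be the current left endpoint and the newly filled point to be immediately adjacent to it, so that the half-line structure is genuinely preserved rather than fragmented into disjoint pieces.
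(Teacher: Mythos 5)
Your proposal is correct and follows essentially the same route as the paper: induction on $n$ with the base case from \eqref{V0}, and the inductive step split into the same three cases $\alpha_n=S_n$, $\alpha_n=S_n+\mu+1$, and $S_n<\alpha_n<S_n+\mu+1$, using the bound $S_n\le\alpha_n\le S_n+\mu+1$ to make the case split exhaustive. Your explicit remark that the vacated and newly filled levels sit at the boundary of the half-line, so no gap can form, is a nice clarification but not a different argument.
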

\begin{proof}
We do again induction on $n\geq0$. For $n=0$ the intial conditions show the claim is true.
For the induction step $n\mapsto n+1$ we distinguish again three cases.
Case~1: Suppose $S_n=\alpha_n$. Then $\alpha_{n+1}=\alpha_n+1$.
By the induction hypothesis $V(n,u)>0$ iff $u\geq\alpha_n$. There is a trade at time $n$ and
thus $V(n+1,S_n)=V(n+1,\alpha_n)=0$. A new order is placed above $V(n+1,S_n+\mu)=V(n,S_n+\mu)+1>0$,
but we had already $V(n,S_n+\mu)>0$. All other positions are unchanged.
Case~2: Suppose $\alpha_n=S_n+\mu+1$. Then $S_n<\alpha_n$ and $V(n,S_n)=0$, thus no trade takes place.
A new order is placed at $S_n+\mu=\alpha_n-1=\alpha_{n+1}$, thus $V(n+1,\alpha_{n+1})>0$. All other positions
are unchanged.
Case~3: Suppose $S_n<\alpha_n<S_n+\mu+1$. Then $\alpha_{n+1}=\alpha_n$. No trade takes place also in this case.
The new order is placed at $S_n+\mu\geq\alpha_n$. All other positions
are unchanged.
\begin{corollary}\label{cor1}
For $i\geq1$ we have 
\begin{equation}
\tau_i=\inf\{n>\tau_{i-1}:S_n=\alpha_n\}.
\end{equation}
\end{corollary}
\end{proof}
\begin{remark}
The trade happens exactly when the mid-price hits the best ask price, i.e. when $\alpha_n=S_n$, $n\geq1$.
\end{remark}

\begin{remark}  In Figure \ref{FigAlpha} red points represent the best ask price process ${\{\alpha_{n}\}}_{n\geq 0}$. 
Figure \ref{FigAlpha} is generated by program simulation and it confirms the aforementioned results for the best ask price process ${\{\alpha_{n}\}}_{n\geq 0}$. 
\end{remark}

\begin{figure}[ptb]
\begin{center}
\includegraphics[width=0.7\textwidth,height=0.4\textheight]{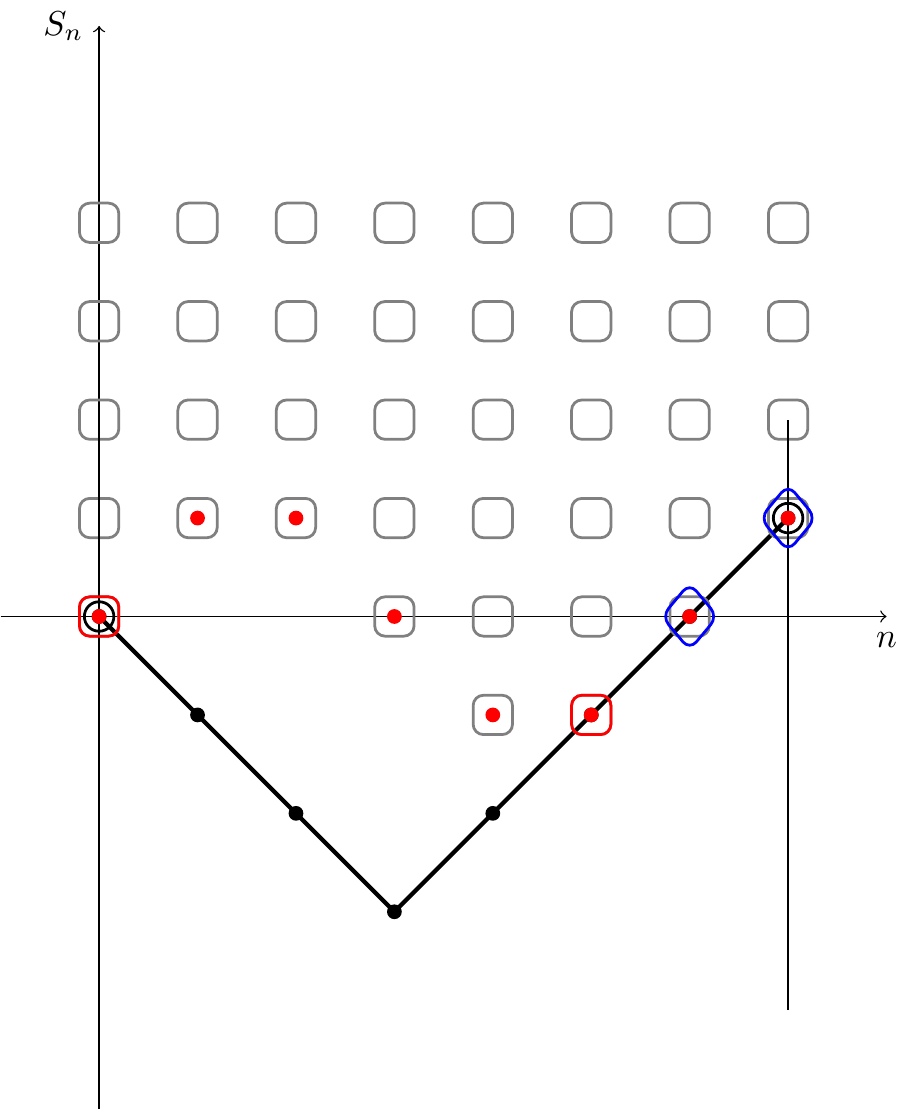}\newline%
\caption{In this Figure red points represent the best ask process}%
\label{FigAlpha}%
\end{center}
\end{figure}

We can now define the \emph{trading excursion process}. We define
it as a process that takes values in the set of simple random paths of finite
length. More precisely, for $n\geq0$ let
\begin{equation}
\mathbf{U}^{(n)}=\left\{  (s_{0},\ldots,s_{n})\in\mathbb{Z}^{n+1}%
:\mbox{$s_0=0$, $|s_j-s_{j-1}|=1$ for $j=1,\ldots,n$}\right\},
\end{equation}
and $\mathbf{U}^{(\infty)}=\bigcup_{n\geq0}\mathbf{U}^{(n)}$. Let
$\mathcal{U}^{(\infty)}$ denote the power set of $U^{(\infty)}$ and define a
discrete measure~$\nu$ by $\nu(\{(s_{0},\ldots,s_{n})\})=2^{-n}$. Then
$(U^{(\infty)},\mathcal{U}^{(\infty)}),\nu)$ becomes a $\sigma$-finite measure
space. Next we define the \emph{trading excursion process} $(e_{i})_{i\geq1}$
by
\begin{equation}
e_{in}=S(\tau_{i-1}+n)-S(\tau_{i-1}),\quad0\leq n\leq T_{i},\quad i\geq1,
\end{equation}
which takes values in $U^{(\infty)}$.
\begin{remark}
For each $i\geq1$ if $e_{iT_{i}}>0$ the Type I trade occurs at time $\tau_{i}$, otherwise if $e_{iT_{i}}\leq0$  the Type II  trade occurs at time $\tau_{i}$.
\end{remark}

\begin{lemma}
The trading excursions $e_{i}, i\geq1$ are iid.
\end{lemma}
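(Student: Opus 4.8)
The plan is to exhibit the trading excursions as the successive regeneration blocks of a Markov chain that is restarted, each time, from one and the same fixed state, and then to invoke the strong Markov property of the driving increments.

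First I would introduce the increment sequence $\xi_{n}=S_{n}-S_{n-1}$, $n\geq1$, which is iid with $P(\xi_{n}=\pm1)=1/2$, together with the difference process $D_{n}=\alpha_{n}-S_{n}$. From the defining recursion~\eqref{bestAsk} one reads off
\begin{equation}
D_{n+1}=D_{n}-\xi_{n+1}+I_{\{D_{n}=0\}}-I_{\{D_{n}=\mu+1\}},\qquad n\geq0,\quad D_{0}=0,
\end{equation}
so that $(D_{n})_{n\geq0}$ is a Markov chain on the finite state space $\{0,1,\ldots,\mu+1\}$ driven by the iid increments; in particular $D_{n}$ is $\mathcal{F}_{n}:=\sigma(\xi_{1},\ldots,\xi_{n})$-measurable. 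By Corollary~\ref{cor1} the trading times satisfy $\tau_{i}=\inf\{n>\tau_{i-1}:D_{n}=0\}$, i.e.\ they are precisely the successive return times of $D$ to state~$0$, and consequently $D_{\tau_{i}}=0$ for every $i\geq0$ (with $\tau_{0}=0$, $D_{0}=0$).

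The crucial structural fact is therefore that at every trading time the chain $D$ sits in the same deterministic state~$0$. Since each $\tau_{i-1}$ is a stopping time for $(\mathcal{F}_{n})$, the strong Markov property of the iid sequence $(\xi_{n})$ yields that the shifted sequence $\theta_{\tau_{i-1}}\xi:=(\xi_{\tau_{i-1}+1},\xi_{\tau_{i-1}+2},\ldots)$ is again iid $\pm1$ and independent of $\mathcal{F}_{\tau_{i-1}}$. Because $D$ evolves deterministically from its current value under these increments, the first return time to~$0$ after $\tau_{i-1}$ — which equals $T_{i}=\tau_{i}-\tau_{i-1}$ — and the whole excursion $e_{i}=\bigl(S(\tau_{i-1}+n)-S(\tau_{i-1})\bigr)_{0\leq n\leq T_{i}}$ can be written as one and the same measurable functional $e_{i}=\Phi(\theta_{\tau_{i-1}}\xi)$ of the shifted increments, where $\Phi$ records the partial sums up to the first epoch at which the $D$-chain started from~$0$ returns to~$0$.

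Identical distribution then follows because $\Phi$ is applied to sequences of the same law (iid $\pm1$) and the chain is restarted from the same state~$0$, so $\Law(e_{i})$ does not depend on $i$. Independence follows because $e_{i}=\Phi(\theta_{\tau_{i-1}}\xi)$ depends only on the post-$\tau_{i-1}$ increments and is hence independent of $\mathcal{F}_{\tau_{i-1}}$, whereas $e_{1},\ldots,e_{i-1}$ are $\mathcal{F}_{\tau_{i-1}}$-measurable; iterating over $i$ gives mutual independence of the family $(e_{i})_{i\geq1}$.

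The point needing the most care is the verification that the regeneration is genuinely clean: one must check that the state of the driving chain at each trading time is the same fixed value and carries no residual memory of the past — this is exactly what Corollary~\ref{cor1} guarantees, and it is the reason the best-ask reformulation was established beforehand. One should also record that the excursions are a.s.\ of finite length, i.e.\ $\tau_{i}<\infty$ almost surely; this holds because $D$ is a finite irreducible Markov chain, so state~$0$ is recurrent (equivalently, the symmetric walk $S$ a.s.\ exceeds every level). With these two observations in place, the strong Markov / regeneration argument is complete.
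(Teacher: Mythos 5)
Your proof is correct and follows essentially the same route as the paper's: the paper likewise invokes the strong Markov property at the stopping times $\tau_{i-1}$ and considers the shifted pair $(S',\alpha')$, which regenerates cleanly because $\alpha_{\tau_{i-1}}=S_{\tau_{i-1}}$, and your difference chain $D_n=\alpha_n-S_n$ is just an explicit packaging of that observation. (One small imprecision: for $\mu=1$ the chain $D$ is not irreducible on all of $\{0,\ldots,\mu+1\}$, since the interior state $1$ is unreachable from $0$; but recurrence of state $0$ still holds, e.g.\ via the ladder-time argument you also give, so nothing in the proof breaks.)
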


\begin{proof}
Since $\tau_i,i\geq0$ are stopping times for the random walk, the claim follows from the Markov property
of the random walk. To give some details, fix $i\geq1$ and consider
the processes
\begin{equation}
S'(n)=S(\tau_{i-1}+n)-S(\tau_{i-1}),\quad
\alpha'(n)=\alpha(\tau_{i-1}+n)-\alpha(\tau_{i-1}),\quad
n\geq0.
\end{equation}
\end{proof}The trading excursion process is uniquely determined by the price
process and conversely, the price process can be reconstructed from the
trading excursion process by glueing the excursions together, similar as in
the classical excursion theory, cf.~\cite[Prop.XII.2.5, P.482]{RY}.

\subsection{Avalanche length}
\label{subsec:avalanche}
Instead of studying the dynamics of the two-parameter order-book process,  following \cite{R} and other earlier work, and motivated by \cite{SC}, we
focus on
a scalar key quantity, the \emph{avalanche length}. When there is a longer
up-movment of the asset price, then a substantial portion of the order book is
\textquotedblright eaten up\textquotedblright\ until the price drops again.
This can perhaps be described as an \textquotedblleft order
avalanche\textquotedblright.

But long up movements are rare for a random walk and when we think of the
corresponding model for continuous time, the probability that Brownian motion
increases on an interval of positive length is zero. Therefore we consider as
an avalanche a period of trade executions, but allow a small window, of size
$\varepsilon$ at most, without trading. Only when there is no trade for a period
of length larger than $\varepsilon$ the avalanche terminates. \begin{figure}[h]
\includegraphics[width=0.9\textwidth,height=0.4\textheight]{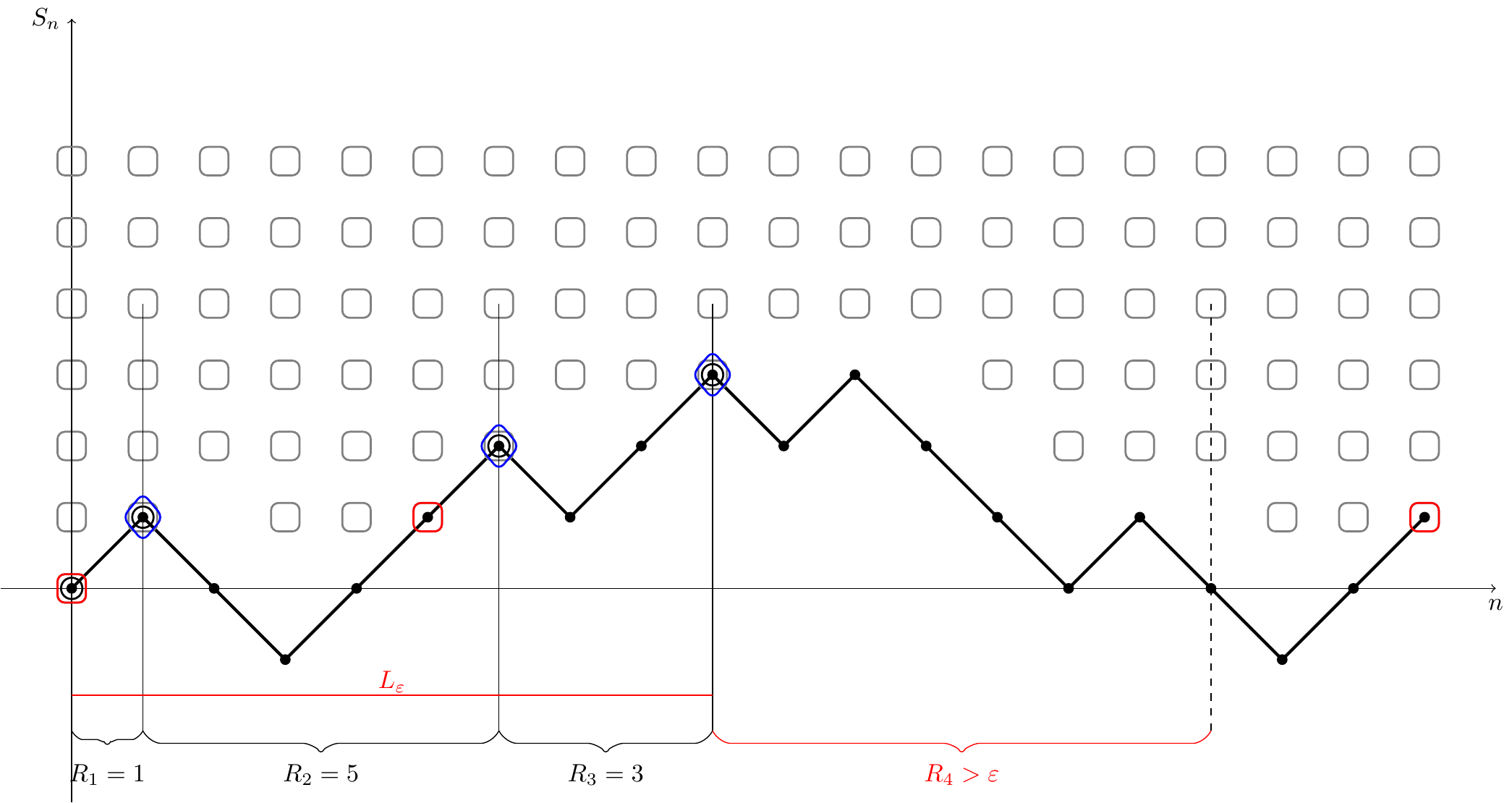}\newline%
\caption{Avalanche example without flash-crash trades when $\varepsilon=7$}%
\label{figsimple}%
\end{figure}Figure~\ref{figsimple} shows an avalanche consisting of a sequence
of typical Type I trades. For the simplified avalanche length we consider only
Type I trades, and completely ignore (do not take into account) Type II trades and even some Type I trades which follow Type II trade (as depicted in the Figure \ref{FigSimpl}).

But an avalanche may continue, even if the price drops, but recovers quickly,
that is, when we have a Type II flash-crash trade. This situation is illustrated in 
Figure~\ref{figfull} for $\varepsilon=8, \mu=2$. \begin{figure}[h]
\includegraphics[width=0.9\textwidth,height=0.4\textheight]{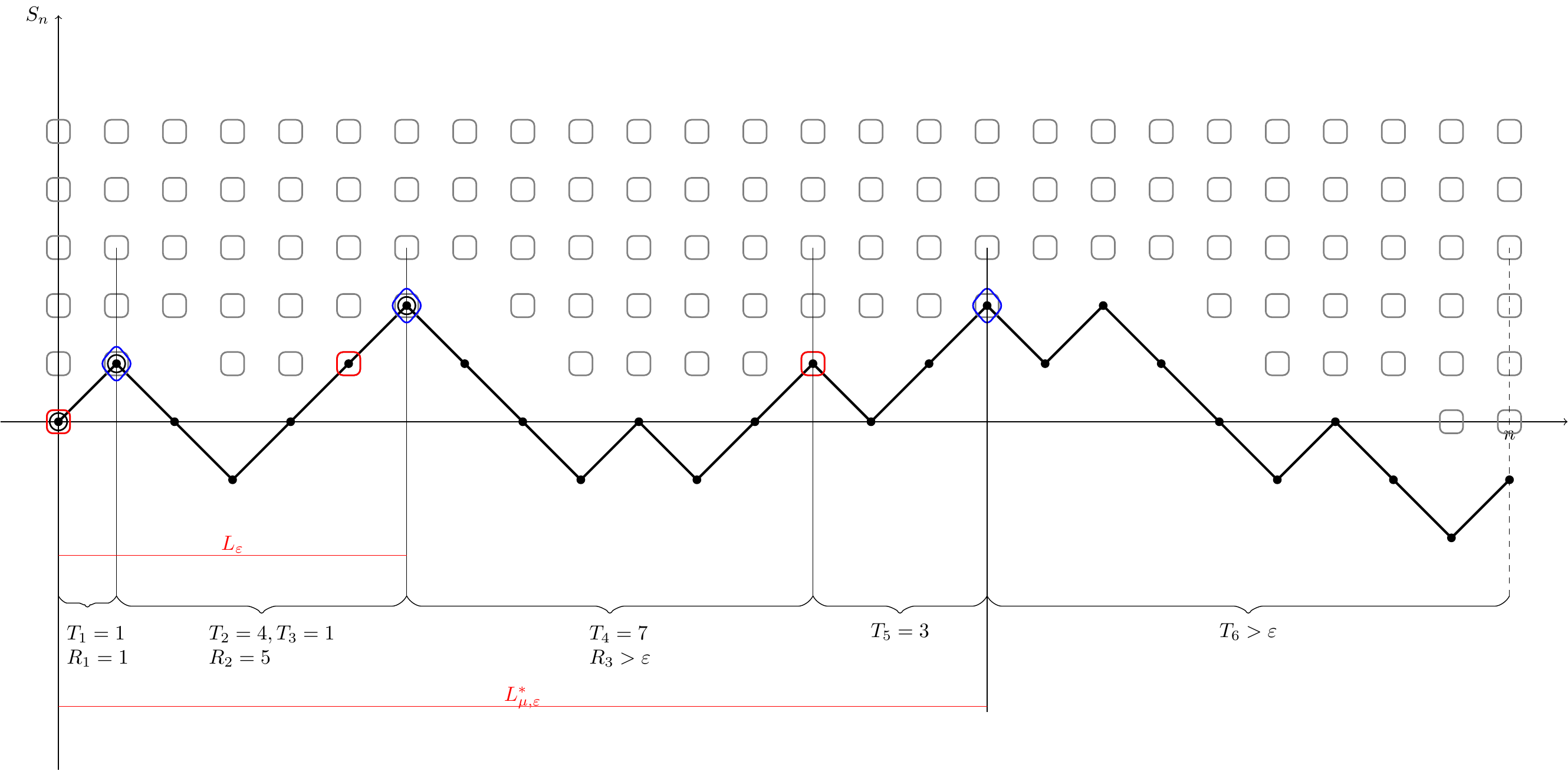}\newline%
\caption{Full versus simplified avalanche length example with flash-crash
trade}%
\label{figfull}%
\end{figure}Formal definitions will be given below.

\section{Simplified avalanche length}
\label{sec:simplified}
\subsection{Discrete results, generating function}
\label{sec:generating}
The simplified avalanche length $L_{\varepsilon}$ is defined as follows: if
 $R_{1}\leq\varepsilon$,\ldots, $R_{k}\leq\varepsilon$, $R_{k+1}%
>\varepsilon$, $k\geq1$, then
\begin{equation}
\label{DefL}L_{\varepsilon}=R_{1}+\ldots+R_{k}.
\end{equation}
This is illustrated in Figure~\ref{figsimple} above particularly for $k=3$ and $\varepsilon=7$.

\begin{proposition}\label{prop1}
The probability generating function for the simplified avalanche length is
given by
\begin{equation}
\label{gfl}E[z^{L_{\varepsilon}}]=\frac{P[R_{1}>\varepsilon]}{E[1-z^{R_{1}}%
;R_{1}\leq\varepsilon]+P[R_{1}>\varepsilon]},
\end{equation}

\end{proposition}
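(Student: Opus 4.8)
The plan is to decompose the generating function according to the number $k$ of consecutive ``short'' intertrading times preceding the first ``long'' one, to exploit the independence of the $R_i$ in order to factor the resulting restricted expectations, and finally to sum a geometric series and tidy up the denominator.

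First I would record that $R_1,R_2,\ldots$ are iid. The simplified trading times $\rho_i$ of Definition~\ref{defSimplified} are exactly the successive strict ascending ladder epochs of the walk $\{S_n\}$, so applying the strong Markov property at $\rho_{i-1}$ shows that the shifted walk $\{S(\rho_{i-1}+n)-S(\rho_{i-1})\}$ is an independent copy of $\{S_n\}$, and $R_i$ is its first strict ascending ladder time. This is the same reasoning as in the trading-excursion lemma above, and it yields that the $R_i$ are iid, each distributed as $R_1=\rho_1$. Next I would partition $\Omega$ by setting, for $k\geq0$,
\[
A_k=\{R_1\leq\varepsilon,\ldots,R_k\leq\varepsilon,\ R_{k+1}>\varepsilon\},
\]
with $A_0=\{R_1>\varepsilon\}$, on which the empty sum gives $L_{\varepsilon}=0$. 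These events are disjoint, and their union is $\{R_i>\varepsilon\text{ for some }i\}$, whose complement $\{R_i\leq\varepsilon\ \forall i\}$ is null since $P[R_1>\varepsilon]>0$. On $A_k$ we have $L_{\varepsilon}=R_1+\cdots+R_k$, so
\[
E[z^{L_{\varepsilon}}]=\sum_{k\geq0}E[z^{R_1+\cdots+R_k};A_k].
\]

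Writing $a=a(z)=E[z^{R_1};R_1\leq\varepsilon]$ and $q=P[R_1>\varepsilon]$, the independence of the $R_i$ lets me factor each summand as $E[z^{R_1+\cdots+R_k};A_k]=a^k q$, whence
\[
E[z^{L_{\varepsilon}}]=q\sum_{k\geq0}a^k=\frac{q}{1-a}.
\]
Using $1=P[R_1\leq\varepsilon]+q$ I would then rewrite the denominator as $1-a=q+E[1-z^{R_1};R_1\leq\varepsilon]$, which is precisely the claimed expression~\eqref{gfl}.

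The only step requiring genuine care is the convergence of the geometric series. For $|z|\leq1$ one has $|a|\leq E[|z|^{R_1};R_1\leq\varepsilon]\leq P[R_1\leq\varepsilon]=1-q<1$, the strict inequality holding because $q=P[R_1>\varepsilon]>0$ (for the simple symmetric random walk the first strict ascending ladder time exceeds any fixed $\varepsilon$ with positive probability). This bound simultaneously justifies the interchange of summation and expectation and guarantees $|a|<1$, so the series is summable; everything else is bookkeeping.
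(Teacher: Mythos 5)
Your proof is correct and follows essentially the same route as the paper's: decompose over the number $k$ of short intertrading times, use the iid property of the $R_i$ (as strict ascending ladder increments) to factor, sum the geometric series, and rewrite the denominator via $1=P[R_1\leq\varepsilon]+P[R_1>\varepsilon]$. Your additional care about the partition being exhaustive up to a null set and about $|a|<1$ justifying the summation is a welcome tightening that the paper leaves implicit.
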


\begin{proof}
From (\ref{DefL}) we have:
\begin{equation}
\begin{split}
E[z^{L_{\varepsilon}}]&=\sum_{k\geq 0} E[z^{R_1+\cdots+R_k}:R_1\leq{\varepsilon}, R_2\leq{\varepsilon},..., R_k\leq{\varepsilon}, R_{k+1}>{\varepsilon}]\\
&= \sum_{k\geq 0} E[z^{R_1}z^{R_2}\cdots z^{R_k}:R_1\leq{\varepsilon}, R_2\leq{\varepsilon},..., R_k\leq{\varepsilon}, R_{k+1}>{\varepsilon}]
\end{split}
\end{equation}
Since $\rho_j$ are the strict ascending ladder times, by following \cite[XIII.1d, P.305]{Fel1} 
it is clear that $R_j$ are  independent and identically distributed (iid). Thus, we obtain 
\begin{equation}
\begin{split}
E[z^{L_{\varepsilon}}]&=\sum_{k\geq 0} E[z^{R_1}:R_1\leq{\varepsilon}]^kE[1:R_{k+1}>{\varepsilon}]=\frac{P[R_{k+1}>{\varepsilon}]}{1-E[z^{R_1}:R_1\leq{\varepsilon}]}\\
&=\frac{P[R_{k+1}>{\varepsilon}]}{E[1:R_1\leq {\varepsilon}]+E[1:R_1>{\varepsilon}]-E[z^{R_1}:R_1<{\varepsilon}]}\\
&=\frac{P[R_{k+1}>{\varepsilon}]}{E[1-z^{R_1}:R_1<{\varepsilon}]+P[R_1>{\varepsilon}]}=\frac{P[R_{1}>{\varepsilon}]}{E[1-z^{R_1}:R_1<{\varepsilon}]+P[R_1>{\varepsilon}]}
\end{split}
\end{equation}
\end{proof}

\begin{remark}
The random variable $R_{1}$ is the first passage time of the random walk
through~$1$. Its distribution can be computed by the reflection principle, and
can be found in \cite[Theorem 2 of III,7, P.89]{Fel1}. Feller defines the probability  that the first passage through $r$ occurs at $n$ by $\varphi_{r,n}$, and by reflection principle:
\begin{equation}
\varphi_{r,n}=\frac{r}{n}\binom{n}{\frac{n+r}{2}}2^{-n}.
\end{equation}
Using this notation we have $P[R_{1}=2n+1]=\varphi_{1,2n+1}$.

 The
corresponding probability generating function can be found in
\cite[XIII,(4.10), P.315]{Fel1}, it is
\begin{equation}
\label{eq:pdfR}
E[z^{R_1}]=\frac{1-\sqrt{1-z^{2}}}{z}.
\end{equation}

\end{remark}

\begin{corollary}
Denoting by $a_k$  the number of the paths of length $k$ representing the first simplified Type~I paths, we obtain:
\begin{equation}
a_k=\frac1{k}\binom{k}{(k+1)/2}
\end{equation}
when $k$ is odd, and $a_k=0$ otherwise.	
\end{corollary}

Since $(R_j)_{j\in\{{1,...,n+1}\}}$ are  independent and identically distributed (iid), instead of $R_1$ from now on we write $R.$
The probability generating function in (\ref{gfl}) can be rewritten in various
different and more explicit ways. We have

\begin{equation}
P[R>\varepsilon]=\frac{3+\varepsilon'}{2+\varepsilon'}\binom{2+\varepsilon'}{\frac{3+\varepsilon'}{2}}/2^{2+\varepsilon'},
\end{equation}

where we write $$\varepsilon'=2\left\lfloor{\dfrac{\varepsilon -1}{2}}\right\rfloor +1,$$
for an easy notation to distinguish odd and even $\varepsilon.$

We have also
\begin{equation}
E[z^{L_{\varepsilon}}]=\frac{1-\Phi_{\varepsilon}(1)}{1-\Phi_{\varepsilon}(z)},
\end{equation}
where $\Phi_{\varepsilon}(z)$ is a polynomial, namely
\begin{equation}
\Phi_{\varepsilon}(z)=\sum_{k=0}^{(\varepsilon'-1)/2}\varphi_
{1,2k+1}z^{2k+1}.
\end{equation}
It can be expressed in terms of the Gaussian hypergeometric function
$_{2}F_{1}(a,b,c,z)$,

\begin{equation}
\Phi_{\varepsilon}(z)=\frac{1-\sqrt{1-z^2}}{z}-\binom{2+\varepsilon'}{\frac{3+\varepsilon'}{2}}\frac{z^{2+\varepsilon'}}{(2+\varepsilon') 2^{2+\varepsilon'}}  {_{2}F}_{1}(1,1+\frac{\varepsilon'}{2},\frac{5+\varepsilon'}{2},z^2).
\end{equation}
\begin{remark}
The hypergeometric function with parameter 1 can be expressed also in terms of (generalizations) of the associated Legendre Polynomials,
see \cite[15.4.13, P.562]{AS}.
\end{remark}

\begin{corollary}
\label{SimplAvalanDist}
\begin{equation}
E[L_{\varepsilon}]=2+\varepsilon'-\frac{2+\varepsilon'}{3+\varepsilon'}\cdot2^{2+\varepsilon'}\bigg/\binom{2+\varepsilon'}{\frac{3+\varepsilon'}{2}},
\end{equation}
\begin{equation}
\begin{split}
 \Var[L_{\varepsilon}]&=\frac{4}{3}\cdot(2+3\varepsilon'+\varepsilon'^2)-\frac{6+7\varepsilon'+2\varepsilon'^2}{3+\varepsilon'}\cdot2^{2+\varepsilon'}\bigg/\binom{2+\varepsilon'}{3+\varepsilon'}\\
&+\frac{(2+\varepsilon')^2}{(3+\varepsilon')^2}\cdot2^{4+2\varepsilon'}\bigg/\binom{2+\varepsilon'}{\frac{3+\varepsilon'}{2}}^2
\end{split}
\end{equation}

\end{corollary}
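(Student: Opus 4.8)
The plan is to read everything off the probability generating function $G(z)=E[z^{L_\varepsilon}]=(1-\Phi_\varepsilon(1))/(1-\Phi_\varepsilon(z))$ established above, using the standard moment relations $E[L_\varepsilon]=G'(1)$ and $\Var[L_\varepsilon]=G''(1)+G'(1)-G'(1)^2$. Writing $c:=1-\Phi_\varepsilon(1)=P[R>\varepsilon]$ and differentiating the quotient, the factor $1-\Phi_\varepsilon(z)$ collapses to $c$ at $z=1$, giving
\[
G'(1)=\frac{\Phi_\varepsilon'(1)}{c},\qquad G''(1)=\frac{2\,\Phi_\varepsilon'(1)^2}{c^2}+\frac{\Phi_\varepsilon''(1)}{c}.
\]
Combining these yields the compact intermediate formulas $E[L_\varepsilon]=\Phi_\varepsilon'(1)/c$ and $\Var[L_\varepsilon]=\Phi_\varepsilon'(1)^2/c^2+(\Phi_\varepsilon''(1)+\Phi_\varepsilon'(1))/c$, so the whole problem reduces to evaluating the two derivatives of the polynomial $\Phi_\varepsilon$ at $1$ and inserting the value of $c$, which is already known in closed form.

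Next I would reduce these derivatives to weighted partial sums of the first-passage probabilities. Since $\Phi_\varepsilon(z)=\sum_{k=0}^{(\varepsilon'-1)/2}\varphi_{1,2k+1}z^{2k+1}$, we have $\Phi_\varepsilon'(1)=\sum_k(2k+1)\varphi_{1,2k+1}=E[R;R\le\varepsilon]$ and $\Phi_\varepsilon''(1)+\Phi_\varepsilon'(1)=\sum_k(2k+1)^2\varphi_{1,2k+1}=E[R^2;R\le\varepsilon]$; the intermediate formulas are then exactly the truncated-renewal identities $E[L_\varepsilon]=E[R;R\le\varepsilon]/P[R>\varepsilon]$ and its second-moment analogue. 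Using $(2k+1)\varphi_{1,2k+1}=\binom{2k+1}{k+1}2^{-(2k+1)}$, both derivatives become partial sums of shifted central binomial coefficients.

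The crux is evaluating these partial sums in closed form. For the first I would prove, by induction on $N=(\varepsilon'-1)/2$,
\[
\sum_{k=0}^{N}\binom{2k+1}{k+1}2^{-(2k+1)}=\frac{N+2}{2^{2N+2}}\binom{2N+3}{N+2}-1,
\]
where the inductive step reduces to the elementary ratio $\binom{2N+3}{N+2}=\tfrac{2(2N+3)}{N+2}\binom{2N+1}{N+1}$, which makes the increment telescope precisely to $\binom{2N+1}{N+1}2^{-(2N+1)}$. Rewritten in terms of $\varepsilon'$ this reads $\Phi_\varepsilon'(1)=\tfrac{3+\varepsilon'}{2^{2+\varepsilon'}}\binom{2+\varepsilon'}{(3+\varepsilon')/2}-1$, and dividing by $c=P[R>\varepsilon]$ and simplifying yields the stated expression for $E[L_\varepsilon]$. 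For the variance I would derive the analogous closed form for $\sum_{k=0}^{N}(2k+1)\binom{2k+1}{k+1}2^{-(2k+1)}$, either by a second induction or by Abel summation against the identity just proved, and assemble $\Phi_\varepsilon''(1)$ from it.

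I expect this second-moment partial sum to be the main obstacle: the extra factor $(2k+1)$ destroys the clean telescoping seen for $\Phi_\varepsilon'(1)$, so one must either guess the right closed form (a product of $\binom{2+\varepsilon'}{(3+\varepsilon')/2}$ with a quadratic in $\varepsilon'$) and verify it by induction, or push the Abel-summation bookkeeping through carefully. Once both partial sums are in hand, substituting them together with $c$ into $\Var[L_\varepsilon]=\Phi_\varepsilon'(1)^2/c^2+(\Phi_\varepsilon''(1)+\Phi_\varepsilon'(1))/c$ and collecting over a common denominator should reproduce the three-term expression in the statement; this final simplification is routine but error-prone, so I would cross-check it numerically at small values (e.g. $\varepsilon'=1,3,5$) against direct evaluation of the generating function.
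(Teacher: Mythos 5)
Your proposal is correct and follows essentially the same route as the paper, whose entire proof is the single sentence that the moments are obtained from the derivatives of the probability generating function at $z=1$; you simply carry out in full the differentiation, the reduction to truncated moments $E[R;R\le\varepsilon]$ and $E[R^2;R\le\varepsilon]$, and the closed-form evaluation of the resulting binomial partial sums that the authors leave implicit.
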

\begin{proof}
We obtain the moments from the derivatives of the probability generating function at $z=1$.
\end{proof}
\subsection{Brownian excursion limit for the simplified avalanche length}
\label{subsec:simplim}
For a first limit result we consider $\frac1nR$ for $n\geq1$ and introduce the
Laplace-Stieltjes transform for its distribution, namely $E[e^{-\frac{\lambda}{n}R}]$.

\begin{lemma}
We have for fixed $\Re(\lambda)>0$ the asymptotics
\begin{equation}
\label{eq:lapR}
E[e^{-\frac{\lambda}{n}R}]=1-\sqrt{2\lambda}\cdot n^{-\frac12}+\mathcal{O}%
\left(  n^{-1}\right)  ,\quad n\to\infty.
\end{equation}
\end{lemma}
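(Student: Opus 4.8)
The plan is to start from the exact probability generating function for $R=R_1$ recorded in the excerpt, namely
\begin{equation*}
E[z^{R}]=\frac{1-\sqrt{1-z^{2}}}{z},
\end{equation*}
and substitute $z=e^{-\lambda/n}$, then expand as $n\to\infty$. The whole statement is an asymptotic expansion of a completely explicit function, so the proof is essentially a careful Taylor/Puiseux expansion rather than anything probabilistic.

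First I would set $z=e^{-\lambda/n}$ and write $E[e^{-\lambda R/n}]=\frac{1-\sqrt{1-z^2}}{z}$. The key observation is that the singularity of the generating function sits at $z=1$, and substituting $z=e^{-\lambda/n}$ pushes $z$ toward $1$ as $n\to\infty$, so the square-root term $\sqrt{1-z^2}$ is exactly what produces the fractional power $n^{-1/2}$. Concretely I would expand $1-z^2=1-e^{-2\lambda/n}$. Since $e^{-2\lambda/n}=1-\frac{2\lambda}{n}+\frac{2\lambda^2}{n^2}+\mathcal{O}(n^{-3})$, we get
\begin{equation*}
1-z^2=\frac{2\lambda}{n}-\frac{2\lambda^2}{n^2}+\mathcal{O}(n^{-3})=\frac{2\lambda}{n}\left(1-\frac{\lambda}{n}+\mathcal{O}(n^{-2})\right).
\end{equation*}
Taking the square root gives $\sqrt{1-z^2}=\sqrt{2\lambda}\,n^{-1/2}\bigl(1-\tfrac{\lambda}{2n}+\mathcal{O}(n^{-2})\bigr)=\sqrt{2\lambda}\,n^{-1/2}+\mathcal{O}(n^{-3/2})$.

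Next I would handle the denominator factor $1/z=e^{\lambda/n}=1+\mathcal{O}(n^{-1})$ and multiply through. Writing $E[e^{-\lambda R/n}]=\bigl(1-\sqrt{1-z^2}\bigr)e^{\lambda/n}$, the leading term is $1$, the next term is $-\sqrt{2\lambda}\,n^{-1/2}$ from the square root, and every remaining contribution — both the $\mathcal{O}(n^{-3/2})$ correction to the square root and the cross term between $-\sqrt{2\lambda}\,n^{-1/2}$ and the $\mathcal{O}(n^{-1})$ part of $e^{\lambda/n}$ — is of order $n^{-1}$ or smaller. Collecting these yields exactly
\begin{equation*}
E[e^{-\lambda R/n}]=1-\sqrt{2\lambda}\,n^{-1/2}+\mathcal{O}(n^{-1}),
\end{equation*}
which is the claim.

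The only genuinely delicate point is bookkeeping the orders so that no term of size $n^{-1}$ is mistakenly absorbed or dropped: one must verify that the cross term $(-\sqrt{2\lambda}\,n^{-1/2})\cdot(\lambda/n)$ is $\mathcal{O}(n^{-3/2})$ and hence safely inside the error, while confirming there is genuinely no intermediate term between $n^{-1/2}$ and $n^{-1}$. For the stated condition $\Re(\lambda)>0$ one should also note that $\sqrt{2\lambda}$ is interpreted via the principal branch of the square root, which is well-defined and analytic on the right half-plane, so the expansion is valid uniformly on compact subsets of $\{\Re(\lambda)>0\}$. I do not expect any real obstacle here; the result is a direct consequence of the explicit formula \eqref{eq:pdfR}, and the $\sqrt{2\lambda}$ prefactor is precisely the Laplace exponent signature one expects, foreshadowing the Brownian (more precisely, stable-$1/2$ / first-passage) scaling limit announced in the subsection title.
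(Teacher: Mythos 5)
Your proposal is correct and follows essentially the same route as the paper: the paper's proof likewise consists of substituting $z=e^{-\lambda/n}$ into the explicit generating function $E[z^{R}]=(1-\sqrt{1-z^{2}})/z$ and combining the exponential series with the power-series expansion of the square root. Your bookkeeping of the orders (in particular that the $\lambda/n$ cross term from the $1/z$ factor is exactly of size $n^{-1}$ and hence absorbed into the error) is accurate, so nothing is missing.
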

\begin{proof}
This lemma is proved by combining (\ref{eq:pdfR}) with the elementary asymptotics,  namely the exponential series $e^{-\lambda/n}$ as $n\to\infty$, and the power series of $\sqrt{1-z^2}$ as $z\to0$.
\end{proof}
Thus $\frac{1}{n}R$ converges in distribution under $P$ to zero.
This is not surprising, as it is essentially the length of the first excursion
of the random walk. For the Brownian limit this degenerates to zero, as
arbitrary small excursions accumulate near time zero. But the second term in
the asymptotic expansion is relevant for the simplified avalanche length. We
can get another point of view by connecting the limit to Ito's excursion
measure.

Let $G$ denote the distribution of Brownian excursion length under 
the upper\footnote{The distribution under the lower Ito measure $n_{-}$ is the same.}
Ito measure $n_{+}$, see Appendix~\ref{sec:excursion}. Then it is well-known, see \cite[XII]{RY}, that $G$ has a
density, namely
\begin{equation}
g(x)=\frac{1}{\sqrt{2\pi}}x^{-\frac{3}{2}},\quad x>0.
\end{equation}
Note that
\begin{equation}
\int_{0}^{\infty}(1-e^{-\lambda x})g(x)dx=\sqrt{2\lambda}, \quad \Re(\lambda)>0.
\end{equation}
\begin{proposition}
For $n\geq1$ let $\mu_{n}=\Law(\frac{1}{n}R)$.
Then we have
\begin{equation}
\lim_{n\to\infty} \int_{0}^{\infty}(1-e^{-\lambda x})n^{\frac12}\mu
_{n}(dx)=\sqrt{2\lambda} , \quad \Re(\lambda)>0,%
\end{equation}
and thus
$\lim_{n\to\infty}n^{\frac12}\mu_{n}=G$ vaguely on $(0,\infty).$
\end{proposition}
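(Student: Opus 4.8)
The first assertion is essentially a restatement of the preceding lemma. Since $\mu_n=\Law(\frac1n R)$, for every $\lambda$ with $\Re(\lambda)>0$ one has
\begin{equation}
\int_0^\infty (1-e^{-\lambda x})\,n^{\frac12}\mu_n(dx)=n^{\frac12}\bigl(1-E[e^{-\frac{\lambda}{n}R}]\bigr),
\end{equation}
so I would simply insert the expansion (\ref{eq:lapR}), giving $n^{\frac12}(\sqrt{2\lambda}\,n^{-\frac12}+\mathcal{O}(n^{-1}))=\sqrt{2\lambda}+\mathcal{O}(n^{-\frac12})\to\sqrt{2\lambda}$. This is the whole content of the first display.

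For the vague convergence I would write $\theta_n:=n^{\frac12}\mu_n$ and aim to show $\theta_n\to G$ vaguely on $(0,\infty)$, i.e. $\int f\,d\theta_n\to\int f\,dG$ for every continuous compactly supported $f$ on $(0,\infty)$. The plan follows the continuity-theorem route suggested by the word \emph{thus}: the first part together with $\int_0^\infty(1-e^{-\lambda x})G(dx)=\sqrt{2\lambda}$ shows that the Laplace exponents of $\theta_n$ converge to that of $G$. First I would establish a uniform bound on compacts: for $0<a\le b$ and any fixed real $\lambda>0$,
\begin{equation}
\theta_n([a,b])\le\frac{1}{1-e^{-\lambda a}}\int_0^\infty(1-e^{-\lambda x})\,\theta_n(dx),
\end{equation}
and the right-hand side is bounded in $n$ because it converges to $\sqrt{2\lambda}/(1-e^{-\lambda a})$. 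Hence $\{\theta_n\}$ is vaguely precompact (Helly selection for Radon measures on the locally compact space $(0,\infty)$), and every subsequence admits a further subsequence converging vaguely to some Radon measure $\theta$.

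It then remains to identify $\theta=G$, and this is the delicate step, because the natural test function $1-e^{-\lambda x}$ is not compactly supported in $(0,\infty)$; one must rule out escape of mass towards $0$ and towards $\infty$ when passing the vague limit through $\int(1-e^{-\lambda x})\,\theta_n(dx)$. Near $0$ I would control the contribution using $0\le 1-e^{-\lambda x}\le\lambda x$ together with the majorisation $(1-e^{-\lambda x})^2=2(1-e^{-\lambda x})-(1-e^{-2\lambda x})$, whose integral against $\theta_n$ is again bounded by the first part, so that the near-origin mass carries negligible weight uniformly in $n$. At $\infty$ I would use the explicit first-passage probabilities: by the reflection formula and Stirling, $\varphi_{1,m}\sim\sqrt{2/\pi}\,m^{-3/2}$, whence $n^{\frac12}P[R/n>M]\to G((M,\infty))$, giving a uniform tail bound that vanishes as $M\to\infty$. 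These two estimates let me split the integral over $(0,a)\cup[a,M]\cup(M,\infty)$, pass the vague limit in the compact middle part, and conclude $\int(1-e^{-\lambda x})\,\theta(dx)=\sqrt{2\lambda}$ for all $\lambda>0$; injectivity of the map sending a L\'evy-type measure to its Laplace exponent then forces $\theta=G$. Since every subsequential limit equals $G$, the full sequence converges.

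I expect this tail and near-origin control to be the main obstacle. In fact the same local asymptotic $\varphi_{1,m}\sim\sqrt{2/\pi}\,m^{-3/2}$ furnishes a cleaner, self-contained alternative that sidesteps the non-compact test functions altogether: for compactly supported continuous $f$ on $(0,\infty)$,
\begin{equation}
\int f\,d\theta_n=n^{\frac12}\sum_{k\ge0} f\Bigl(\tfrac{2k+1}{n}\Bigr)\varphi_{1,2k+1}
\end{equation}
is, after substituting the asymptotic and writing $x_k=(2k+1)/n$ with spacing $2/n$, a Riemann sum for $\tfrac1{\sqrt{2\pi}}\int_0^\infty f(x)x^{-\frac32}\,dx=\int f\,dG$; the compact support of $f$ makes the sum effectively finite and bounded away from $0$, so the approximation is uniform and the convergence is immediate. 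I would use this computation either as the primary argument for the vague convergence or as the device that supplies the uniform tail control needed in the continuity-theorem approach.
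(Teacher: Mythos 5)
Your treatment of the first display coincides with the paper's: since $\mu_n$ is a probability law, the integral equals $n^{1/2}\bigl(1-E[e^{-\lambda R/n}]\bigr)$ and (\ref{eq:lapR}) gives the limit immediately. For the vague convergence, however, you take a genuinely different route. The paper's proof is a one-line appeal to Lemma~\ref{lem:vaglim} and Proposition~\ref{prop:vaglim}: there $\mu_n$ is tilted by the density $1-e^{-\lambda x}$ and renormalised into probability measures $\gamma_n$ whose Laplace transforms converge pointwise to those of the tilted limit $\nu_\lambda$; the continuity theorem for Laplace transforms gives weak convergence, and dividing the tilt back out against test functions supported away from the origin yields vague convergence --- no compactness argument, no tail estimates, no asymptotics for $\varphi_{1,m}$. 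Your first route (Helly precompactness plus identification of subsequential limits through the Laplace exponent and uniqueness of the L\'evy--Khinchine representation) is workable but heavier, and the one step that is not quite right as written is the near-origin control: the identity $(1-e^{-\lambda x})^2=2(1-e^{-\lambda x})-(1-e^{-2\lambda x})$ does not by itself majorise $\int_0^a(1-e^{-\lambda x})\,\theta_n(dx)$, since the mass $\theta_n((0,a))$ itself diverges; the clean repair is to use your first display at a large parameter, e.g. $\int_0^a(1-e^{-\lambda x})\theta_n(dx)\le \lambda M^{-1}e^{Ma}\int_0^\infty(1-e^{-Mx})\theta_n(dx)$ with $M=1/a$, which is $O(\sqrt{a}\,)$ uniformly in large $n$. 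Your second route, the Riemann-sum computation from $\varphi_{1,2k+1}\sim\sqrt{2/\pi}\,(2k+1)^{-3/2}$, is correct and complete as it stands (compact support of $f$ makes the Stirling approximation uniform and removes all boundary issues), and it is in fact more elementary and self-contained than the paper's argument, proving the vague convergence directly without the auxiliary lemmas. What the paper's softer tilting argument buys is reusability: the same Lemma~\ref{lem:vaglim} is invoked again in Section~\ref{subsec:limit-full} for the full avalanche length, where no explicit local asymptotics analogous to those of $\varphi_{1,m}$ are available.
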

\begin{proof}
This follows from (\ref{eq:lapR}). 
Let $\mu_{n}=\Law(\frac{1}{n}R)$ and $\nu$ denote the measure with density $g$.
Then we can apply Lemma~\ref{lem:vaglim} and 
furthermore  Proposition~(\ref{prop:vaglim}). 
\end{proof}

In the following limit theorem $\varepsilon$ denotes a strictly positive real number. We write for brevity $L_{n\varepsilon}$ instead of the more precise  $L_{\lfloor{n\varepsilon}\rfloor}$. Let \emph{erf} be the \emph{error} function (see \cite{OLBC}), which is defined as:

\begin{equation}
erf(x)=\frac{2}{\sqrt{\pi}} \int_{0}^{x}e^{-t^2}dt.
\end{equation}
\begin{theorem}
The scaled simplified avalanche length for the simple symmetric random
walk converges in distribution to the simplified Brownian avalanche length.
Analytically we have
\begin{equation}
\lim_{n\to\infty} E[e^{-\frac\lambda{n}L_{n\varepsilon}}]= \frac1{\sqrt
{\lambda\varepsilon\pi}\erf(\sqrt{\lambda\varepsilon})+e^{-\lambda\varepsilon}},
\end{equation}
and
\begin{equation}\label{SimpAvalCorr}
\lim_{n\to\infty}E[\frac1{n}L_{n\varepsilon}]=\varepsilon,\quad \lim_{n\to\infty}{\Var}[\frac1{n}L_{n\varepsilon}]=\frac{4}{3}\varepsilon^2.
\end{equation}
\end{theorem}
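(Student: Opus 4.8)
The plan is to start from the probability generating function in Proposition~\ref{prop1}, substitute $z=e^{-\lambda/n}$ and replace the integer threshold by $\lfloor n\varepsilon\rfloor$, and then extract the leading $n^{-1/2}$ asymptotics of numerator and denominator. Writing $R=R_1$ and
\[
B_n=P[R>\lfloor n\varepsilon\rfloor],\qquad A_n=E[1-e^{-\frac{\lambda}{n}R};R\le\lfloor n\varepsilon\rfloor],
\]
Proposition~\ref{prop1} gives $E[e^{-\frac{\lambda}{n}L_{n\varepsilon}}]=B_n/(A_n+B_n)$. Since both $A_n$ and $B_n$ vanish like $n^{-1/2}$, I would multiply through by $n^{1/2}$ and compute $\lim_n n^{1/2}B_n$ and $\lim_n n^{1/2}A_n$ separately; the sought limit is then the ratio of these two numbers.

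For $B_n$ I would use the closed form for $P[R>\varepsilon]$ stated after Proposition~\ref{prop1} together with Stirling's formula applied to the near-central binomial coefficient $\binom{2+\varepsilon'}{(3+\varepsilon')/2}$ with $\varepsilon'=n\varepsilon+O(1)$, using $2^{2+\varepsilon'}/\binom{2+\varepsilon'}{(3+\varepsilon')/2}\sim\sqrt{\pi\varepsilon'/2}$. This yields $n^{1/2}B_n\to\int_\varepsilon^\infty g(x)\,dx=\sqrt{2/(\pi\varepsilon)}$, consistent with the density $g$ of the Brownian excursion length. For $A_n$ I would exploit that the test function $1-e^{-\lambda x}$ vanishes linearly at the origin, which tames the $x^{-3/2}$ singularity of $g$, so that the Laplace-functional convergence already proved (the proposition giving $\int_0^\infty(1-e^{-\lambda x})n^{1/2}\mu_n(dx)\to\sqrt{2\lambda}$) can be combined with a tail estimate to conclude $n^{1/2}A_n\to\int_0^\varepsilon(1-e^{-\lambda x})g(x)\,dx$.

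The remaining computation is to evaluate this truncated integral. Integrating by parts with antiderivative $-2x^{-1/2}$ (the boundary term at $0$ vanishes because $1-e^{-\lambda x}=O(x)$) and then substituting $u=\sqrt{\lambda x}$ turns the resulting $\int_0^\varepsilon x^{-1/2}e^{-\lambda x}\,dx$ into an error-function integral, giving
\[
\int_0^\varepsilon(1-e^{-\lambda x})g(x)\,dx=\sqrt{2\lambda}\,\erf(\sqrt{\lambda\varepsilon})-\sqrt{\tfrac{2}{\pi\varepsilon}}\bigl(1-e^{-\lambda\varepsilon}\bigr).
\]
Adding the limit of $n^{1/2}B_n$ produces the cancellation $-\sqrt{2/(\pi\varepsilon)}(1-e^{-\lambda\varepsilon})+\sqrt{2/(\pi\varepsilon)}=\sqrt{2/(\pi\varepsilon)}\,e^{-\lambda\varepsilon}$, so that $n^{1/2}(A_n+B_n)\to\sqrt{2\lambda}\,\erf(\sqrt{\lambda\varepsilon})+\sqrt{2/(\pi\varepsilon)}\,e^{-\lambda\varepsilon}$. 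Dividing the two limits and clearing the common factor $\sqrt{2/(\pi\varepsilon)}$ yields exactly $\bigl(\sqrt{\lambda\varepsilon\pi}\,\erf(\sqrt{\lambda\varepsilon})+e^{-\lambda\varepsilon}\bigr)^{-1}$.

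For the moment assertions I would pass to the limit in the exact formulas of Corollary~\ref{SimplAvalanDist} with $\varepsilon'=n\varepsilon+O(1)$: the polynomial-in-$\varepsilon'$ parts give the leading behaviour $E[L_{n\varepsilon}]\sim\varepsilon'\sim n\varepsilon$ and $\Var[L_{n\varepsilon}]\sim\frac43\varepsilon'^2\sim\frac43 n^2\varepsilon^2$, while every term containing the binomial ratio $\sim\sqrt{\pi\varepsilon'/2}$ is of strictly lower order in $n$ after the scalings $1/n$ and $1/n^2$; this gives $E[\frac1{n}L_{n\varepsilon}]\to\varepsilon$ and $\Var[\frac1{n}L_{n\varepsilon}]\to\frac43\varepsilon^2$. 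As an independent check one may differentiate the limiting transform $\psi(\lambda)=(\sqrt{\lambda\varepsilon\pi}\,\erf(\sqrt{\lambda\varepsilon})+e^{-\lambda\varepsilon})^{-1}$: with $u=\sqrt{\lambda\varepsilon}$ the denominator $D$ satisfies $dD/du=\sqrt{\pi}\,\erf(u)$ after the two $e^{-u^2}$ terms cancel, whence $-\psi'(0)=D'(0)=\varepsilon$. I expect the main obstacle to be the interchange of limit and integral for $A_n$: the effective test function $(1-e^{-\lambda x})\mathbf{1}_{(0,\varepsilon]}$ is not compactly supported inside $(0,\infty)$ (it reaches the singular point $0$) and is discontinuous at $\varepsilon$, so pure vague convergence of $n^{1/2}\mu_n$ to $G$ does not apply directly. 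Controlling the contribution near $0$ via the linear vanishing of $1-e^{-\lambda x}$ and the contribution beyond $\varepsilon$ via an explicit tail bound on $\mu_n$ (or, equivalently, running the whole argument through the local limit asymptotics $\varphi_{1,m}\sim\sqrt{2/\pi}\,m^{-3/2}$ and a dominated Riemann-sum argument) is the step that requires the most care.
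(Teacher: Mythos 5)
Your argument is correct and reaches the stated transform by a route that is structurally the same as the paper's (Proposition~\ref{prop1} with $z=e^{-\lambda/n}$, then $n^{1/2}$-asymptotics of numerator and denominator identified with integrals against the excursion-length density $g$), but it fills in the two ends differently. Where you compute $\lim_n n^{1/2}P[R>\lfloor n\varepsilon\rfloor]=\sqrt{2/(\pi\varepsilon)}$ by Stirling's formula on the explicit binomial expression, and evaluate $\int_0^\varepsilon(1-e^{-\lambda x})g(x)\,dx$ by an explicit integration by parts leading to the error function, the paper instead quotes the Brownian-side answer from Dudok de Wit's thesis (derived there via the avalanche height $H$) and then re-derives it through It\^o's excursion measure and the Master Formula, with the discrete-to-continuous passage delegated to Lemma~\ref{lem:vaglim} and Proposition~\ref{prop:vaglim}. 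The step you rightly single out as delicate --- justifying $n^{1/2}E[1-e^{-\lambda R/n};R\le\lfloor n\varepsilon\rfloor]\to\int_0^\varepsilon(1-e^{-\lambda x})g(x)\,dx$, where the test function is neither compactly supported in $(0,\infty)$ nor continuous at $\varepsilon$ --- is exactly what Lemma~\ref{lem:vaglim} (equation (\ref{lapmu})) supplies, using that $\varepsilon$ is a continuity point of the absolutely continuous limit measure; you could simply invoke it instead of constructing your own tail estimate or local-limit/Riemann-sum argument, though either of your sketched alternatives would also work. For the moments the paper differentiates the limiting generating function, whereas you pass to the limit in the exact formulas of Corollary~\ref{SimplAvalanDist}; both are valid, and your order-of-magnitude bookkeeping (the binomial-ratio terms are $O(n^{1/2})$ and $O(n^{3/2})$, hence negligible after dividing by $n$ and $n^2$) is sound. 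The net effect of your version is a more elementary and self-contained proof of the analytic limit that does not lean on \cite{DDW} or on excursion theory; what it does not by itself deliver --- and the paper's own remark concedes the same --- is the identification of the limit law as the law of the Brownian avalanche length as a path functional, for which both arguments would need an additional continuity argument on Skorohod or Wiener space.
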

\begin{proof}

The Laplace
transform of the avalanche length $L_{\varepsilon}$ in the context of Parisian
options is presented in  \cite{DW}. Also, that formula can be derived from the
L\'{e}vy measure of the subordinator consisting of Brownian passage times, see  (Dudok de Wit \cite{DDW}, Theorem~16) .

In \cite{DDW} avalanche length $L$ was defined as a random variable modeling the first time after which no orders get executed  in an $\varepsilon$-time
interval, where the orders are executed due to the price increase. Therefore,  it corresponds to the simplified avalanche length in our model.  
Theorem~16 in \cite{DDW} states that the Laplace transform of the avalanche length is given by
\begin{equation}
E\left[  e^{-\lambda L}\right]  =\frac{1}{\sqrt{\lambda
\varepsilon\pi}\erf\left(  \sqrt{\lambda\varepsilon}\right)  +e^{-\lambda
\varepsilon}}. \label{L-trafo of simple avalanche length}%
\end{equation}

In order to derive the formula (\ref{L-trafo of simple avalanche length}) in \cite{DDW} the author first considered the random variable $L_y$ given by conditioning $L$ on $H = y$ for some $y \geq 0$, where $H$ is the avalanche height defined as the highest level on which the order get executes throughout the period of avalanche. 

Theorem~11 in \cite{DDW} states that the avalanche height $H$ is exponentially distributed with parameter $\sqrt{\frac{2}{\pi \varepsilon}}$, so we have:
\begin{equation}\label{HLaplace}
E\left[  e^{-\lambda L}\right]= E\left[E\left[  e^{-\lambda L}|H\right] \right]=\int_0^{\infty}E\left[  e^{-\lambda L_y}\right] \sqrt{\frac{2}{\pi \varepsilon}} e^{-y\sqrt{\frac{2}{\pi \varepsilon}}}dy.
\end{equation}
Theorem~14 in \cite{DDW} established the Laplace transform of the  $L_y$, and it is
\begin{equation}
E\left[  e^{-\lambda L_{y}}\right]  = \exp \Big(-y \sqrt{\frac{2}{\pi \varepsilon}} (\sqrt{\lambda
\varepsilon\pi}\erf\left(  \sqrt{\lambda\varepsilon}\right)  +e^{-\lambda
\varepsilon}-1)\Big). \label{LyLaplaceTransform}%
\end{equation}

Therefore, the Laplace transform of the avalanche length  (\ref{L-trafo of simple avalanche length}) is obtained by combining (\ref{HLaplace}) and (\ref{LyLaplaceTransform}), i.e.:

\begin{equation}
\begin{split}
E\left[  e^{-\lambda L}\right]&=\int_0^{\infty}\exp \Big(-y \sqrt{\frac{2}{\pi \varepsilon}} (\sqrt{\lambda
\varepsilon\pi}\erf\left(  \sqrt{\lambda\varepsilon}\right)  +e^{-\lambda
\varepsilon}-1)\Big) \sqrt{\frac{2}{\pi \varepsilon}} e^{-y\sqrt{\frac{2}{\pi \varepsilon}}}dy\\
&=\sqrt{\frac{2}{\pi \varepsilon}}  \int_0^{\infty}\exp \Big(-y \sqrt{\frac{2}{\pi \varepsilon}} (\sqrt{\lambda
\varepsilon\pi}\erf\left(  \sqrt{\lambda\varepsilon}\right)  +e^{-\lambda
\varepsilon})\Big) dy\\
&=\frac{1}{\sqrt{\lambda
\varepsilon\pi}\erf\left(  \sqrt{\lambda\varepsilon}\right)  +e^{-\lambda
\varepsilon}}.
\end{split}
\end{equation}

\noindent In the following, the approach is to derive this formula by excursion theory. Note that $G\left(  x\right)$ is defined as
\begin{equation}
G\left(  x\right)  =\int_{x}^{\infty}g\left(  y\right)  \,dy, \quad \text{where}\quad g\left(  y\right)  =\frac{1}{\sqrt{2\pi}}y^{-3/2}.
\end{equation}

\noindent We fix $n\in\mathbb{N}$ such that $n>1/\varepsilon$ and
consider only excursions with length $R_i\in\left[  \frac{1}{n},\varepsilon
\right]  $. Note that there exist only finitely many of those, i.e.
$R_{1}^{\left(  n\right)  },...,R_{j}^{\left(  n\right),  }$  and then there is  a
first excursion $R_{j+1}$   (this does not depend on $n$), whose length is more than $\varepsilon$. Define the modified avalanche length $L_{\varepsilon,n}$ by
\begin{equation}
L_{\varepsilon,n}=R_{1}^{\left(  n\right)  }+...+R_{j}^{\left(  n\right)
}\leq L_{\varepsilon}%
\end{equation}
where $L_{\varepsilon}$ is the combined length of all excursions until one
whose length exceeds $\varepsilon$, see (\ref{DefL}).
Since there are a $\sigma$-discrete
number of excursions we translate into the continuous time case, following the geometric series. 
 Since the $R_{i}^{(n)}$ are iid, we sum the
geometric series:
\begin{equation}
E\left[  e^{-\lambda L_{\varepsilon,n}}\right]  =\frac{P\left(  R>\varepsilon
\right)  }{1-E\left[  e^{-\lambda R^{\left(  n\right)  }}\mathbb{I}%
_{R^{(n)}\leq\varepsilon}\right]  }.
\end{equation}

\noindent Letting $n\rightarrow\infty$, we obtain by dominated convergence
that%
\begin{equation}
E\left[  e^{-\lambda L_{\varepsilon}}\right]  =\frac{P\left(  R>\varepsilon
\right)  }{1-E\left[  e^{-\lambda R}\mathbb{I}_{R\leq\varepsilon}\right]  }.
\end{equation}

\noindent In order to estimate the denominator
\begin{equation}
E\left[  \mathbb{I}_{R\leq\varepsilon
}\right] + E\left[  \mathbb{I}_{R>\varepsilon
}\right] -E\left[  e^{-\lambda R}\mathbb{I}_{R\leq\varepsilon}\right] =E\left[  \left(  1-e^{-\lambda R}\right)  \mathbb{I}_{R\leq\varepsilon
}\right]  +P\left(  R>\varepsilon\right),
\end{equation}

\noindent we define
\begin{align}
\phi\left(  x\right)     =\left(  1-e^{-\lambda x}\right)  \mathbb{I}%
_{x\leq\varepsilon}.
\end{align}

\noindent Thus:
\begin{align}
\phi^{\prime}\left(  x\right)     =\lambda e^{-\lambda x}\mathbb{I}%
_{x\leq\varepsilon}+\left(  e^{-\lambda x}-1\right)  \delta_{\varepsilon
}\left(  x\right).
\end{align}
Following the distribution of $R$ under the Ito
measure $\,n$ the first equality in (\ref{dokaz}) is obtained by the Master Formula (\ref{MasterFormula}), see \cite[XII, Proposition1.10]{RY}.  Further, we progress by Fubini theorem, so we obtain:
\begin{equation}\label{dokaz}
\begin{split}
E\left[  \left(  1-e^{-\lambda R}\right)  \mathbb{I}_{R\leq\varepsilon
}\right] & +E\left[  \mathbb{I}_{R>\varepsilon
}\right]  =\int\phi\left(  R\left(
u\right)  \right)  \,n\left(  du\right)  +\int\mathbb{I}_{R(u)>\varepsilon
}\,n\left(  du\right) \\
&  =\int_{0}^{\infty}\phi^{\prime}\left(  x\right)  \,G\left(  x\right)
\,dx+G\left(  \varepsilon\right) \\
&  =\lambda\int_{0}^{\varepsilon}e^{-\lambda x}G(x)\,dx+e^{-\lambda
\varepsilon}G\left(  \varepsilon\right) -G\left(  \varepsilon\right) + G\left(  \varepsilon\right) \\
&  =\left(  e^{-\lambda\varepsilon}-1\right)  G\left(  \varepsilon\right)
+\lambda\int_{0}^{\varepsilon}e^{-\lambda x}G(x)\,dx+G\left(  \varepsilon
\right) \\
&  =\left(  e^{-\lambda\varepsilon}-1\right)  G\left(  \varepsilon\right)
+\lambda\int_{0}^{\varepsilon}e^{-\lambda x}G(x)\,dx+\int
_{\varepsilon}^{\infty}g(x)\,dx \\
&  =\int_{0}^{\varepsilon}\left(  1-e^{-\lambda x}g(x)\right)  \,dx+\int
_{\varepsilon}^{\infty}g(x)\,dx,
\end{split}
\end{equation}

where last equality is obtained by the partial integration.

Furthermore, (\ref{SimpAvalCorr}) is obtained from differentiating the generating function.
\end{proof}

We note that the moments from  (\ref{SimpAvalCorr}) are consistent with the asymptotics of the moments from Corollary \ref{SimplAvalanDist}.

\begin{remark}
The distribution of the simplified Brownian avalanche length was obtained by
\cite{DDW}.
We see that the limit of the random walk
avalanche length agrees with the Brownian avalanche length. To prove that the
limit distribution actually \emph{is} the Brownian avalanche length
distribution, without referring to the result in \cite{DDW}, would require
further justification, perhaps a continuity argument for the avalanche length
as a functional on the Skorohod (or Wiener) space. This is postponed for now.
\end{remark}

The analytical result above allows to study the distribution of $L_{\varepsilon}$ in more detail. We conjecture it has an exponentially decaying tail, and thus positive integer moments of all orders, which can be obtained from differentiating the probability generating function \cite{HWP}.

\section{More excursions: The full avalanche length}
\label{sec:more}
In this section we are going to analyze the full avalanche length with parameter $\mu \geq 1$ and windows parameter $\varepsilon>0$, which we denote
by ${L^{*}_{\mu,\varepsilon}}$.
The order execution avalanche continues, until there is a time interval
of length greater then $\varepsilon$ that contains neither Type~I nor Type~II trades,
and thus the full avalanche length is obtained from a sequence of trades with
intratrading time less than $\varepsilon$ followed by a trade with intratrading
time bigger than~$\varepsilon$.

Formally ${L^{*}_{\mu,\varepsilon}}$ is defined as follows: If
$k\geq1$ and $T_{1}\leq\varepsilon$,\ldots, $T_{k}\leq\varepsilon$, $T_{k+1}>\varepsilon$, then
\begin{equation}
{L^{*}_{\mu,\varepsilon}}=T_{1}+\ldots+T_{k}.
\end{equation}
\subsection{Generating functions for the full avalanche length}
\label{subsec:genfull}
Given the parameter $\mu\geq1$ we define three sets of random walk paths of length $n\geq1$ by

\begin{equation}
\mathscr{A}_{n,\mu}=\left\{s\in U_{n}:\mbox{$s_0=0$,
$-\mu<s_k\leq0$ for $0< k\leq n-1$ and $s_n=+1$}\right\},	`
\end{equation}
\begin{equation}
\mathscr{B}_{n,\mu}=\left\{s\in U_{n}:\mbox{$s_0=0$,
$-\mu<s_k\leq0$ for $0< k< n-1$, $s_{n-1}=0$ and $s_n=-1$}\right\},
\end{equation}

\begin{equation}
\mathscr{C}_{n,\mu}=\left\{s\in \mathcal{A}_{n,\mu}:\mbox{
$\min(s_0,\ldots,s_{n-1})=-\mu+1$}\right\}  .
\end{equation}
Next we define the corresponding sets with arbitrary, finite path length, that is,
\begin{equation}
\mathscr{A}_{\mu}=\bigcup_{n\geq1}\mathscr{A}_{n,\mu},\quad\mathscr{B}_{\mu
}=\bigcup_{n\geq1}\mathscr{B}_{n,\mu},\quad\mathscr{C}_{\mu}=\bigcup_{n\geq
1}\mathscr{C}_{n,\mu}.
\end{equation}
Examples are given in Figure~\ref{figa}, Figure \ref{figb} and Figure \ref{figc}.

        \begin{figure}[ht!]
            \includegraphics[width=.13\textwidth]{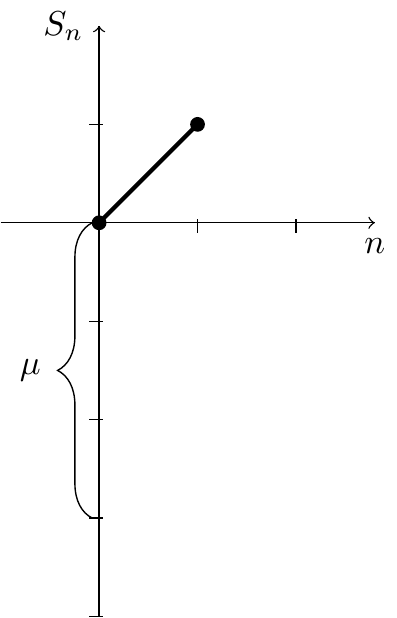}\hfill
            \includegraphics[width=.2\textwidth]{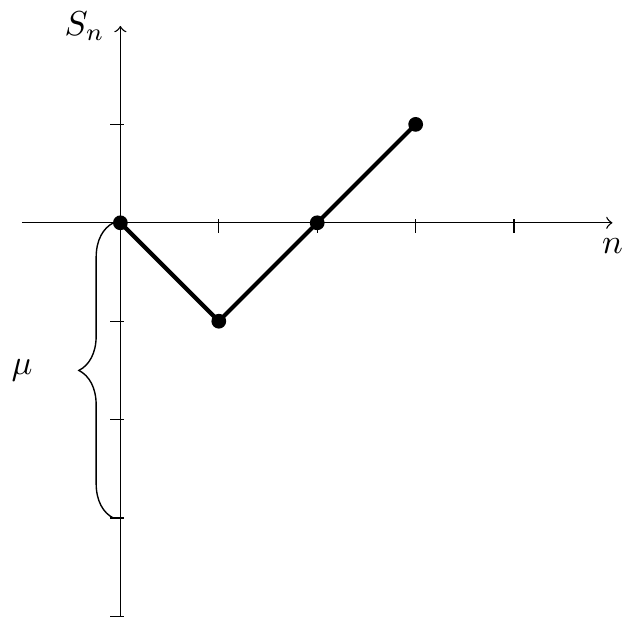}\hfill
            \includegraphics[width=.2\textwidth]{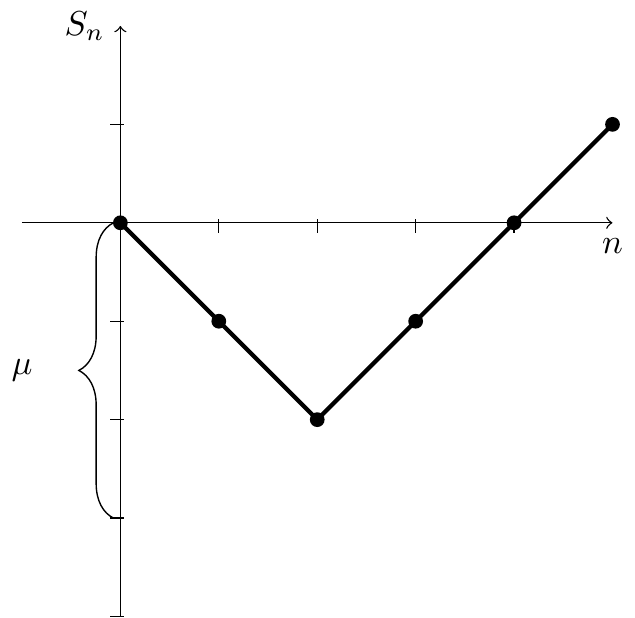}\hfill
            \includegraphics[width=.27\textwidth]{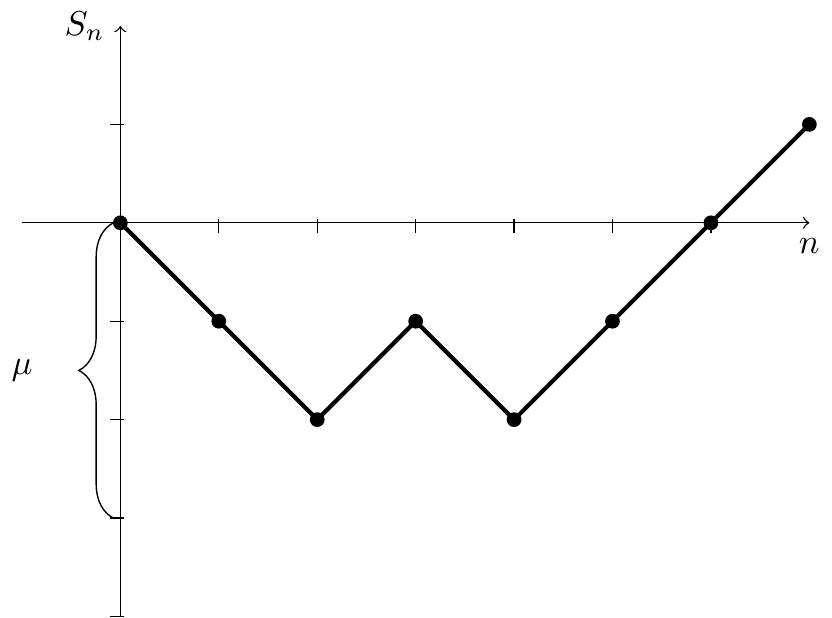}
            \caption{Some elements of $\mathscr{A}_{\mu}$}
            \label{figa}
        \end{figure}

 \begin{figure}[ht!]
            \includegraphics[width=.13\textwidth]{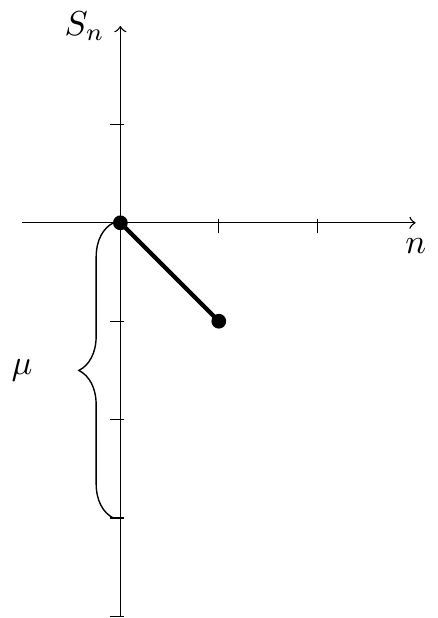}\hfill
            \includegraphics[width=.2\textwidth]{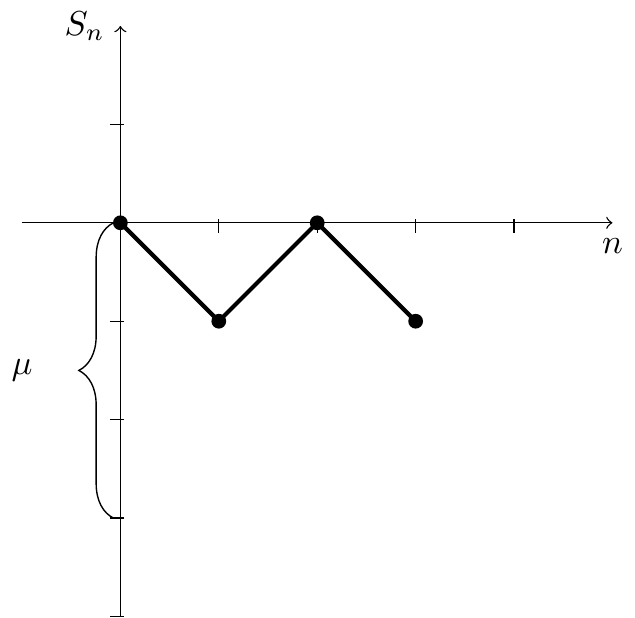}\hfill
            \includegraphics[width=.2\textwidth]{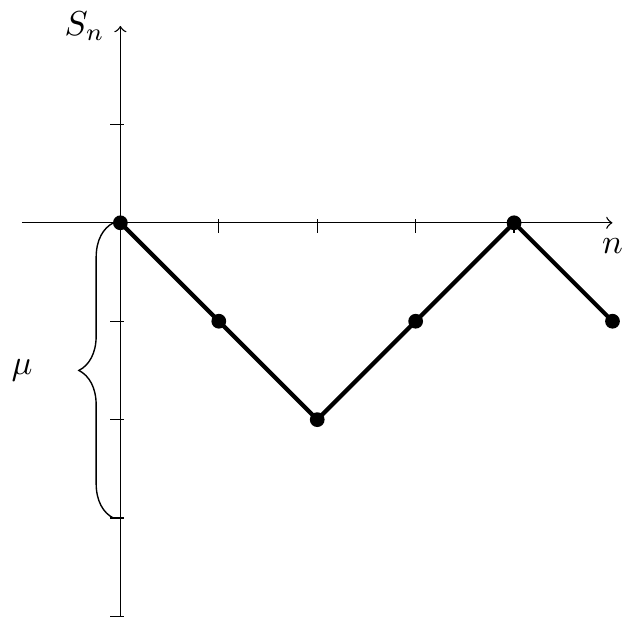}\hfill
            \includegraphics[width=.27\textwidth]{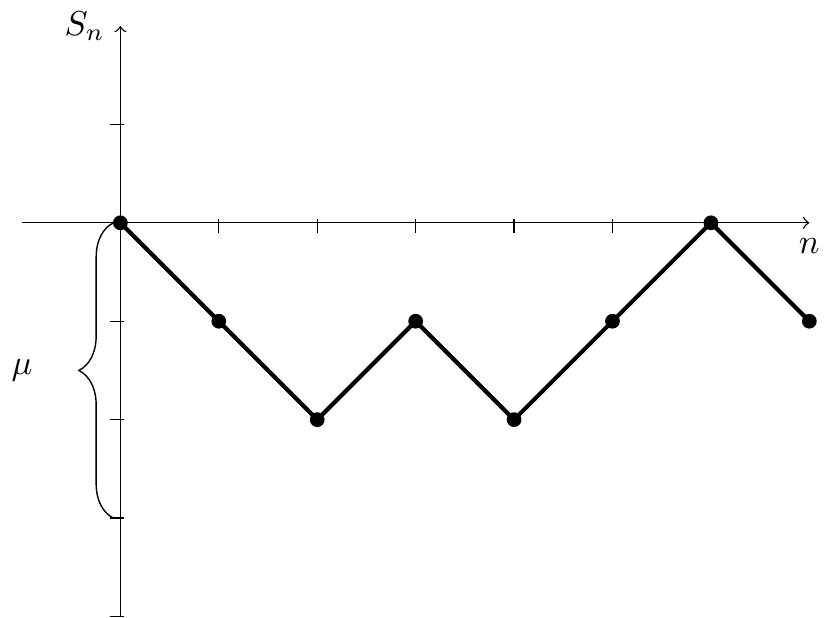}
            \caption{Some elements of $\mathscr{B}_{\mu}$}
            \label{figb}
        \end{figure}

 \begin{figure}[ht!]
            \includegraphics[width=.17\textwidth]{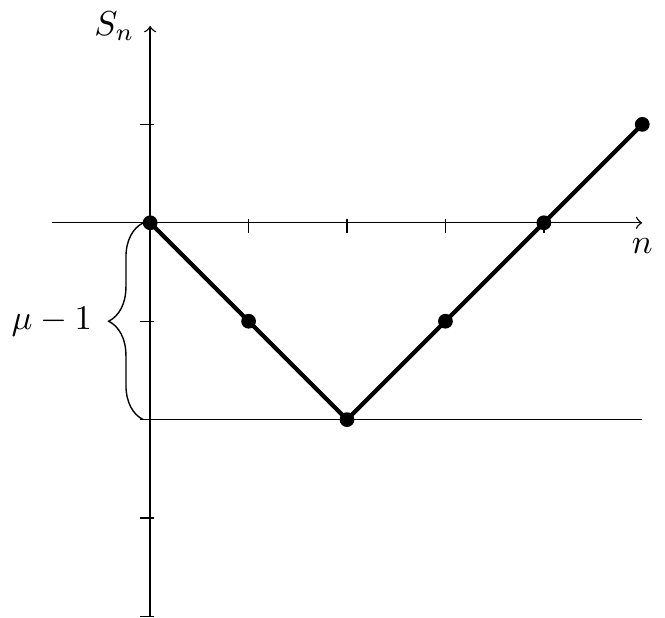}\hfill
            \includegraphics[width=.2\textwidth]{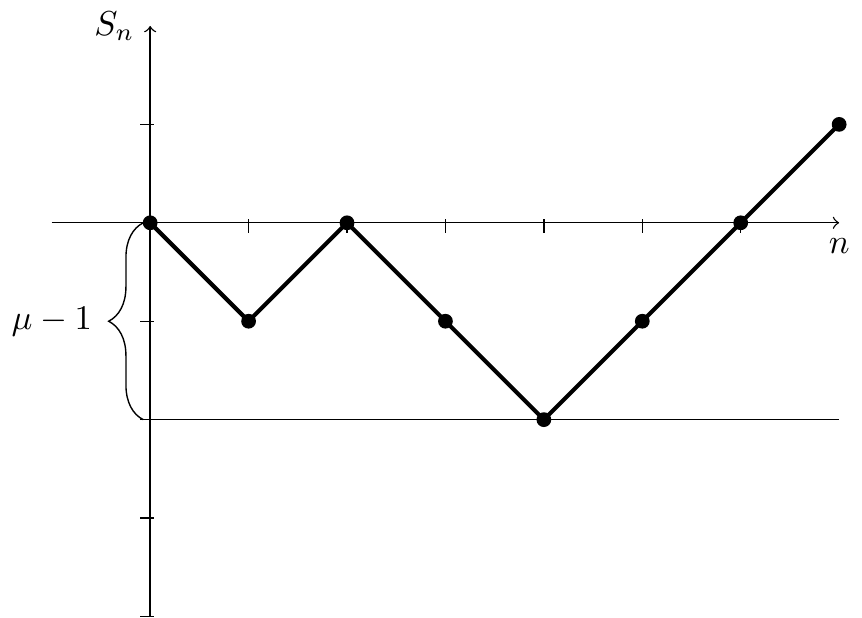}\hfill
            \includegraphics[width=.2\textwidth]{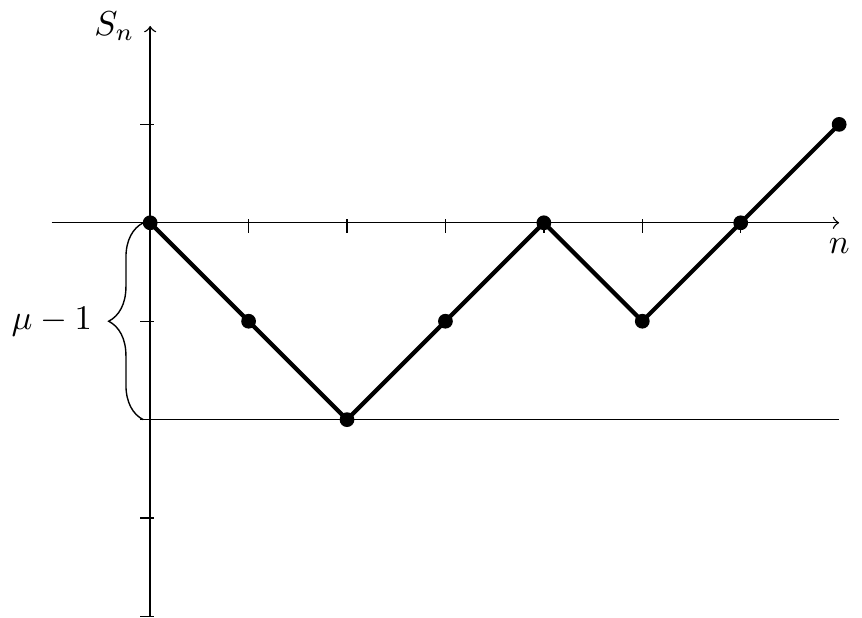}\hfill
            \includegraphics[width=.23\textwidth]{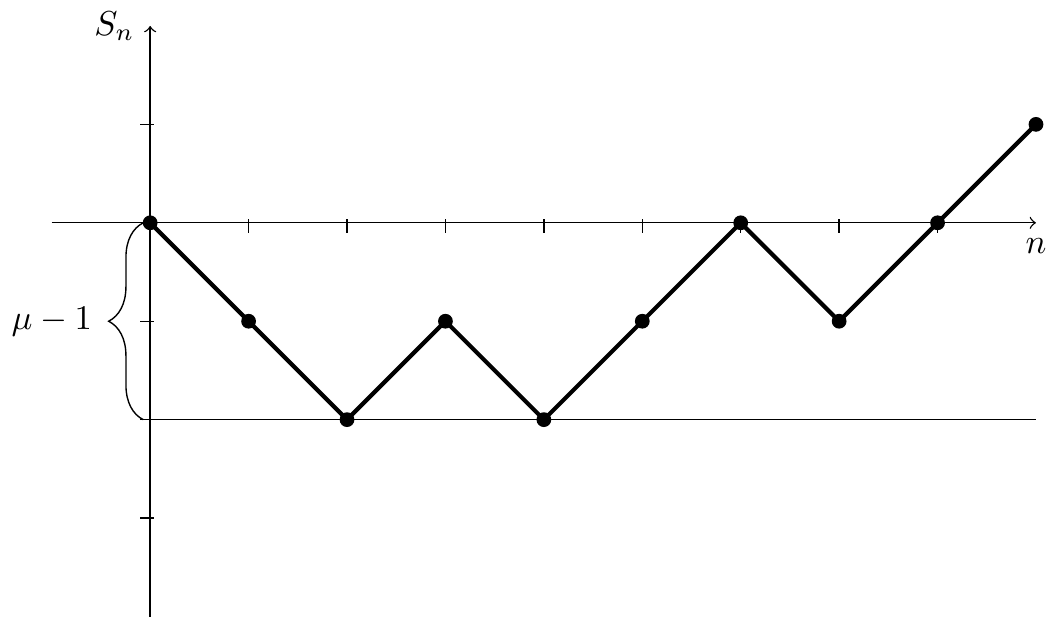}
            \caption{Some elements of $\mathscr{C}_{\mu}$}
            \label{figc}
        \end{figure}

We will count all trading paths of length $n$ and of a certain type by combinatorial 
enumeration. If $n\geq0$ and $(s_{0},\ldots ,s_{n})$ is a path we assign the \emph{size}~$n$ 
to it. We associate with Cartesian products of sets of trading paths the set of all
\emph{concatenations} of the paths from the components, and obviously their
sizes are added. As we have the symmetric Bernoulli model, the corresponding
probability generating functions are obtained from the ordinary generating
functions by the substitution $z\mapsto z/2$.

As in \cite[XIV.4]{Fel1} let us introduce
\begin{equation}
\lambda_{1}(s)=\frac{1+\sqrt{1-s^{2}}}{s},
\quad
\lambda_{2}(s)=\frac{1-\sqrt{1-s^{2}}}{s}.
\end{equation}
Then we have the following lemma:
\begin{lemma}\label{GFABC}
The probability generating functions for the combinatorial classes
$\mathscr{A}_{\mu},\mathscr{B}_{\mu},\mathscr{C}_{\mu}$ are given by
\begin{eqnarray}
A_{\mu}(s)&=&\frac{\lambda_{1}(s)^{\mu}-\lambda_{2}(s)^{\mu}}{\lambda
_{1}(s)^{\mu+1}-\lambda_{2}(s)^{\mu+1}},\label{gfa}\\
B_{\mu}(s)&=&A_{\mu}(s),\label{gfb}\\
C_{\mu}(s)&=&A_{\mu}(s)-A_{\mu-1}(s). \label{gfc}%
\end{eqnarray}
\end{lemma}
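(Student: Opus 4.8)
The plan is to compute $A_\mu$ directly as a two-barrier first-passage generating function and then to obtain $B_\mu$ and $C_\mu$ from $A_\mu$ by purely combinatorial arguments, so that the only real calculation sits in $A_\mu$. For $A_\mu$ I would first note that a path in $\mathscr{A}_{n,\mu}$ stays in the strip $\{-\mu+1,\ldots,0\}$ at all intermediate times, never reaches $-\mu$, and makes its first passage to $+1$ at time $n$; since $s_{n-1}\le 0$ forces $s_{n-1}=0$, the last step is the up-step $0\to +1$. Following Feller~XIV.4, I would run a first-step analysis: let $g_x(s)$ be the probability generating function (variable $s$, weight $s/2$ per step, in line with the paper's $z\mapsto z/2$ convention) for reaching $+1$ before $-\mu$ starting from $x\in\{-\mu+1,\ldots,0\}$. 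Then $g_x=\tfrac{s}{2}(g_{x+1}+g_{x-1})$ with boundary values $g_{+1}=1$ and $g_{-\mu}=0$. The characteristic equation $s\lambda^2-2\lambda+s=0$ has roots $\lambda_1(s),\lambda_2(s)$ with $\lambda_1\lambda_2=1$, so $g_x=a\lambda_1^x+b\lambda_2^x$; imposing the two boundary conditions and reading off $g_0=A_\mu(s)$ gives $(\lambda_1^\mu-\lambda_2^\mu)/(\lambda_1^{\mu+1}-\lambda_2^{\mu+1})$, i.e.\ (\ref{gfa}). A quick check is $A_1(s)=1/(\lambda_1+\lambda_2)=s/2$ and $A_\mu\to\lambda_2=E[s^{R_1}]$ as $\mu\to\infty$.

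For (\ref{gfb}) I would avoid recomputation and instead exhibit a bijection. Deleting the final step of a path in $\mathscr{A}_{n,\mu}$, respectively $\mathscr{B}_{n,\mu}$, leaves in both cases the very same object: a length-$(n-1)$ path from $0$ to $0$ staying in $\{-\mu+1,\ldots,0\}$ at every intermediate time (in $\mathscr{A}$ because $s_{n-1}=0$ is forced, in $\mathscr{B}$ because $s_{n-1}=0$ is imposed and $0$ lies in the strip). The two classes differ only in whether this common core is completed by an up-step (for $\mathscr{A}$) or a down-step (for $\mathscr{B}$), and both completing steps carry weight $s/2$; hence the generating functions coincide and $B_\mu(s)=A_\mu(s)$.

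For (\ref{gfc}) I would use a disjoint decomposition of $\mathscr{A}_{n,\mu}$. Any path in $\mathscr{A}_{n,\mu}$ either attains $\min(s_0,\ldots,s_{n-1})=-\mu+1$, hence lies in $\mathscr{C}_{n,\mu}$ by definition, or stays strictly above $-\mu+1$, i.e.\ satisfies $-(\mu-1)<s_k\le 0$, which is exactly membership in $\mathscr{A}_{n,\mu-1}\subseteq\mathscr{A}_{n,\mu}$ (the endpoints being unchanged). Thus $\mathscr{A}_{n,\mu}=\mathscr{C}_{n,\mu}\sqcup\mathscr{A}_{n,\mu-1}$ for every $n\ge 1$, and summing the generating functions yields $C_\mu(s)=A_\mu(s)-A_{\mu-1}(s)$.

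The main obstacle is the bookkeeping in the $A_\mu$ step: fixing the weight convention (probability generating function, so $s/2$ per step), placing the absorbing levels correctly at $+1$ and $-\mu$, and justifying the closed form $g_x=a\lambda_1^x+b\lambda_2^x$ for $|s|<1$, where $\lambda_1\ne\lambda_2$ (the confluent case $s=\pm1$ being handled by continuity, or throughout at the level of formal power series). Once $A_\mu$ is established, (\ref{gfb}) and (\ref{gfc}) follow immediately from the bijection and the set partition, with no further computation.
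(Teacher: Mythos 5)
Your proposal is correct and follows essentially the same route as the paper: the gambler's-ruin first-passage computation for $A_{\mu}$ (you solve the difference equation with barriers at $+1$ and $-\mu$ directly, where the paper shifts by $\mu$ and cites Feller's formula (4.12) in XIV.4 --- the same calculation), the last-step bijection for $B_{\mu}=A_{\mu}$, and the partition $\mathscr{A}_{n,\mu}=\mathscr{C}_{n,\mu}\sqcup\mathscr{A}_{n,\mu-1}$ for $C_{\mu}$. No gaps; the sanity checks $A_{1}(s)=s/2$ and $A_{\mu}\to\lambda_{2}$ are a welcome addition.
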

\begin{proof}
Shifting a path from $\mathscr{A}_\mu$ by $\mu$ we obtain a path corresponding to absorption at zero
at the $n$-th trial in a symmetric game of gamblers ruin with initial position $\mu$ and absorbing barriers
at $0$ and $\mu+1$. This generating function is derived in \cite[XIV.4, (4.12), P.351]{Fel1}. Using that
formula for $p=1/2$, $q=1/2$, $z=\mu$, $a=\mu+1$ yields (\ref{gfa}).
The last step for a path in $\mathscr{B}_{n,\mu}$ is down. If we replace it by an up-step,
we obtain a path in $\mathscr{A}_{n,\mu}$. Thus we have a bijection from $\mathscr{B}_{n,\mu}$ to
$\mathscr{A}_{n,\mu}$ yielding (\ref{gfb}).
The set $\mathscr{C}_{n,\mu}$ contains pathes of length $n$, that hit $-\mu+1$, but not $-\mu$, thus
\begin{equation}
\mathscr{C}_{n,\mu}=\mathcal{A}_{n,\mu}\setminus\mathscr{A}_{n,\mu-1}.
\end{equation}
Since $\mathscr{A}_{n,\mu-1}\subseteq \mathscr{A}_{n,\mu}$ equation (\ref{gfc}) follows.
\end{proof}\

Next we state the key result for the description of Type~II trades, with is a path decomposition
for trading excursions that lead to a Type~II trade.
which is, in fact, related to the decomposition illustrated in \cite[Sec.VI.3, Fig.5,P.256]{RY}. We state first the path decomposition and the proof, which refers to a few lemmas, which we provide for the clarity of exposition afterwards, at the end of this subsection.

\begin{proposition}
\label{prop:decomp} 
Suppose that the first trade is a Type~II. Then there exists an integer $K\geq1$,
such that $(S(0),\ldots,S(T_1))$ is the concatenation of $K$ elements from $\mathscr{B}_\mu$ and
 one element from $\mathscr{C}_\mu$.
\end{proposition}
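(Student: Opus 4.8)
The plan is to work entirely with the pair $(S,\alpha)$ via Corollary~\ref{cor1} and to cut the excursion at a sequence of \emph{last--exit} times from the levels $0,-1,-2,\dots$; the engine throughout is one inequality. First I would show, by induction on $n$ from the recursion~(\ref{bestAsk}), that during the first excursion
\begin{equation}
\alpha_n=\min\bigl(1,\ \mu+M_{n-1}\bigr),\qquad 1\le n\le\tau_1,\qquad M_{n-1}=\min_{0\le k\le n-1}S_k .
\end{equation}
The step $n\mapsto n+1$ separates the case where $S$ attains a new minimum (which is exactly when the gap $\alpha_n-S_n$ is maximal and drives $\alpha$ down by one) from the complementary case (where $\alpha$ is unchanged), both matched against~(\ref{bestAsk}). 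Since $\tau_1$ is the first time $S_n=\alpha_n$, we have $S_n<\alpha_n\le\mu+M_{n-1}$ for $1\le n<\tau_1$, and as $X_n:=S_n-M_n=\max(0,S_n-M_{n-1})$ this yields the key bound
\begin{equation}\label{eq:key}
X_n<\mu\qquad(0\le n<\tau_1),
\end{equation}
that is, before the trade the walk stays strictly within $\mu$ of its running minimum.

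If the first trade is Type~II then $S_{\tau_1}\le0$, so the formula forces $\mu+M_{\tau_1-1}\le0$, whence $S_{\tau_1}=\mu+M_{\tau_1-1}$, $X_{\tau_1}=\mu$, $S_n\le0$ for $0\le n\le\tau_1$, and the first step is downward. Put $K:=1-S_{\tau_1}\ge1$ and let $m:=M_{\tau_1}=-(K+\mu-1)$ be the global minimum. I would then set $c_0=0$ and
\begin{equation}
c_j=1+\max\{\,k<\tau_1:\ S_k=-(j-1)\,\}\qquad(j=1,\dots,K),
\end{equation}
one step past the last visit to level $-(j-1)$ before the trade (each set is nonempty because the walk descends through every level of $[m,0]$), and define $\mathscr B_{(j)}=(S_{c_{j-1}},\dots,S_{c_j})$ and $\mathscr C=(S_{c_K},\dots,S_{\tau_1})$. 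Since $S_{\tau_1}=-(K-1)\le-(j-1)$ for $j\le K$, after its last visit to $-(j-1)$ the walk neither returns to nor rises above that level, so the step out of $c_j-1$ is downward; hence $S_{c_j-1}=-(j-1)$, $S_{c_j}=-j$, the $c_j$ increase strictly with $c_K\le\tau_1-1$, and the pieces concatenate to $(S_0,\dots,S_{\tau_1})$.

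It remains to place the pieces in their classes. For $\mathscr B_{(j)}$, shifted by $j-1$: the bound $S\le-(j-1)$ on $[c_{j-1},c_j]$ holds because beyond the last visit to $-(j-2)$ (for $j=1$, simply because $S\le0$) the walk never rises above $-(j-1)$; the bound $S\ge-(j-1)-\mu+1$ is precisely~(\ref{eq:key}), since if $S$ reached $-(j-1)-\mu$ on $[c_{j-1},c_j-1]$ then at the later instant $c_j-1<\tau_1$, where $S=-(j-1)$, we would get $X_{c_j-1}\ge\mu$. With $S_{c_j-1}=-(j-1)$ and $S_{c_j}=-j$ this gives $\mathscr B_{(j)}\in\mathscr B_\mu$. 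The same mechanism shows $m$ is attained only on $[c_K,\tau_1]$ (an earlier visit would force a return to $-(K-1)$ with $X\ge\mu$); thus $\mathscr C$, shifted by $K$, stays in $[-\mu+1,0]$, attains its minimum $-\mu+1$ (at $m$), and ends with the up-step $-K\mapsto-(K-1)$, so $\mathscr C\in\mathscr A_\mu$ with minimum $-\mu+1$, i.e. $\mathscr C\in\mathscr C_\mu$.

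The delicate point is the choice of cut times. Defining them by first--passage to $-1,-2,\dots$ is wrong: if the walk descends, climbs back up and descends again, such a $\mathscr B$-piece acquires a forbidden up-crossing above its starting level. The last-exit prescription repairs this, and once it is adopted the single inequality~(\ref{eq:key}) --- which is exactly the statement that $\tau_1$ is the first time the reflected height $S-M$ reaches $\mu$ --- simultaneously delivers the lower bounds for all $\mathscr B$-pieces and the ``minimum $=-\mu+1$'' property of $\mathscr C$; everything else is bookkeeping.
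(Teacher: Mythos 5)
Your proof is correct, and it establishes the same last-exit decomposition as the paper, but by a genuinely different technical route. The paper's proof rests on its trade-classification lemmas: Lemma~\ref{down} gives non-positivity and the depth bound, Lemma~\ref{EarlyII} shows that any sub-path returning to its starting level after dipping by $\mu$ or more would trigger an earlier trade, and the $\mathscr{C}_\mu$-membership of the final piece is argued by ``if the path went deeper, a trade would have completed before $T_1$.'' You instead derive the closed form $\alpha_n=\min(1,\mu+M_{n-1})$ for the best ask on the first excursion and reduce every bound to the single inequality $S_n-M_n<\mu$ for $n<\tau_1$ (with equality at $\tau_1$); this makes the verification that each piece lies in its class, and that the minimum $-\mu+1$ of the $\mathscr{C}_\mu$-piece is attained only on the last segment, entirely mechanical, at the cost of first proving the formula for $\alpha$ by induction. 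Both proofs cut at last-exit (honest) times; your prescription $c_j=1+\max\{k<\tau_1:S_k=-(j-1)\}$ is in fact the more careful one: the paper's $\ell^*$, defined as the \emph{last} visit to $-K$ before $T_1-1$, is cut too late when the walk oscillates between $-K$ and lower levels before its final ascent (e.g.\ $(0,-1,-2,-1,-2,-1,0)$ with $\mu=2$, $K=1$ leaves the remainder $(0,-1,-2,-1)$, which is not in $\mathscr{B}_2$), whereas starting the $\mathscr{C}_\mu$-piece one step past the last visit to $-(K-1)$, as you do, always works. Your closing remark that first-passage cut times would fail is also the right cautionary observation. In short: same decomposition, different and somewhat more self-contained engine, with the cut times pinned down more precisely than in the paper.
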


\begin{figure}[h]

\includegraphics[scale=.55]{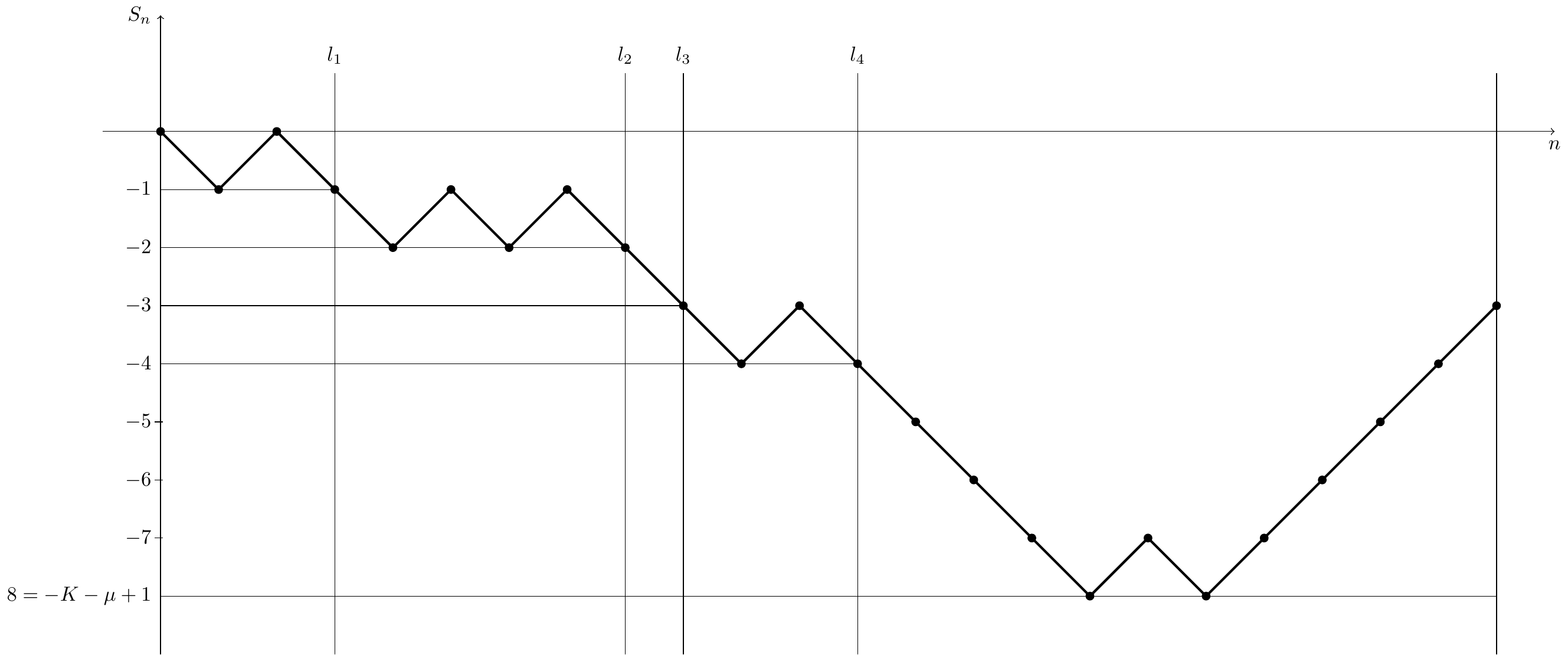}
\caption{Path decomposition of a Type~II trading excursion, $\mu=5$, $K=4$}
\label{fig:deco}
\end{figure}

\begin{proof}
Assume that the first trade is Type II. Then $(S(0),...,S(T_1))$ contains no Type I trade, thus $S(0),...,S(T_1)$ are non positive by Lemma \ref{LemA}.
A Type~II trade occurs when the price goes down by $\mu$ or more steps and then goes up
by $\mu$ steps. The first time this behavior is completed, the excursion ends.

Let $K=-S(T_1-1)$ and $\ell^*=\max\{0\leq n<T_1-1:S(n)=-K\}$, see Figure \ref{fig:deco} for an illustration particularly for  $\mu=5$, $K=4$ and $\ell^*=\ell_4$.  Since the Type II trade is completed, the path $(S(\ell^*),...,S(T_1))$ must go down to the level $-K-\mu+1$, and at that moment the new order is placed at the level $-K+1$. Then, the Type II trade occurs at time $T_1$, i.e. $S(T_1)=-K+1$. Note that if the path $(S(\ell^*),...,S(T_1))$ did go deeper, a Type~II trade would be followed by Type~I trade which would complete before $T_1$. Thus this part of the path, namely  the path $(S(\ell^*),...,S(T_1))$, belongs to the class $\mathscr{C}_\mu$.

Further, note that  every level from $0$ to $-K$ is visited by $(S(0),...,S(\ell^*))$. 
Let $\ell_0=0$ and for $k=1,...,K$ define $\ell_k$ as the \emph{last} time level $-k$ is visited before $\ell^*$ \emph{plus one}, i.e. $\ell^k=\max\{0 < n<\ell^*+1:S(n)=-k\}.$ Since there is no trade at all, in particular no Type~I trade during this initial period, by Lemma \ref{EarlyII}, the paths $(S(\ell_{j-1}),...,S(\ell_{j}))$, $j=1,...,K$, have depth less than $\mu$. Therefore,  there are $K
$ paths, precisely $(S(\ell_{j-1}),...,S(\ell_{j}))$ $j=1,...,K$, that  belong to class $\mathscr{B}_\mu$.
\end{proof}

\begin{remark}
The $\ell_{j}$ are not stoppimg times, but honest times.
\end{remark}

\begin{remark}
The final down segments in the $\mathscr{B}_\mu$ paths seem to vanish in the Brownian limit, but they correspond to the support of the local time at the minimum.
\end{remark}

\begin{lemma}
\label{down} If the first trade is a Type~II trade, then
\begin{equation}
\max\{S(0),\ldots,S(T_{1})\}\leq0, \quad\min\{S(0),\ldots,S(T_{1})\}\leq-\mu.
\end{equation}

\end{lemma}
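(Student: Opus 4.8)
The plan is to prove the two inequalities separately: the upper bound comes directly from the ladder-time criterion of Lemma~\ref{EasyI}, while the lower bound comes from a direct bookkeeping of the volume process at the execution level via the recursion~(\ref{dynamics}). Throughout I use $T_1=\tau_1$ and the fact that a Type~II first trade means $S(\tau_1)\le S(0)=0$, with no trade at any time $0<n<\tau_1$ by the definition of $\tau_1$.

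For the upper bound $\max\{S(0),\dots,S(T_1)\}\le 0$, I would argue by contradiction. Suppose some $S(m)\ge 1$ with $0\le m\le\tau_1$, and let $m^\ast=\inf\{n:S_n=1\}$ be the first passage of the walk to level $1$. Then $m^\ast\le m\le\tau_1$ and $S(k)\le 0$ for all $k<m^\ast$, so $S(m^\ast)=1>0=\max(S_0,\dots,S_{m^\ast-1})$, i.e. $m^\ast\ge 1$ is a strict ascending ladder time. By Lemma~\ref{EasyI} there is a Type~I trade at $m^\ast$. If $m^\ast<\tau_1$ this contradicts $\tau_1$ being the first trade time; if $m^\ast=\tau_1$ it contradicts the first trade being Type~II. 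Either way we reach a contradiction, so the maximum is at most $0$.

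For the lower bound $\min\{S(0),\dots,S(T_1)\}\le-\mu$, I would track the volume at the execution level $u:=S(\tau_1)$, which satisfies $u\le 0$ by the first part. The trade at $\tau_1$ forces $V(\tau_1,u)>0$. From the initial condition~(\ref{V0}), $V(0,u)=0$ when $u<0$; while for $u=0$ the step $n=0\mapsto 1$ clears level $0$ (since $S_0=0$), so $V(1,0)=0$, and here $\tau_1\ge 2$ because $S(1)=\pm1\ne 0$. In both cases $V(\cdot,u)$ equals zero at a time $\le 1\le\tau_1$, and by~(\ref{dynamics}) the only mechanism that can increase it is an order placement, which occurs at a time $n$ exactly when $S(n-1)=u-\mu$. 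Hence some placement must take place at a time $n\le\tau_1$, giving $S(n-1)=u-\mu\le-\mu$ with $0\le n-1\le\tau_1-1$; therefore $\min\{S(0),\dots,S(\tau_1)\}\le-\mu$.

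The individual steps are short; the point requiring the most care is the bookkeeping in the second part, namely verifying that the initial unit of volume sitting at level $0$ is genuinely gone before $\tau_1$ (it is cleared at the very first step because the walk starts at $0$), so that the positive volume seen at execution must come from a \emph{fresh} order placed while the walk was at depth at least $\mu$. To be fully rigorous I would phrase the placement argument in terms of the first time $V(\cdot,u)$ becomes positive after its last zero, which avoids any issue with intermediate clearings and re-placements at level $u$ and pins down a single increment time $n$ with $S(n-1)=u-\mu$.
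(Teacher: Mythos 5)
Your proof is correct. The first half (the upper bound) is essentially the paper's own argument: both take the first time the walk exceeds $0$, observe that it is a strict ascending ladder time, and invoke Lemma~\ref{EasyI} to contradict either the minimality of $\tau_1$ or the Type~II assumption.

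For the lower bound you take a genuinely different route. The paper argues by contraposition through the best-ask process: if $-\mu<S_n\leq0$ on all of $[0,T_1]$, the recursion~(\ref{bestAsk}) forces $\alpha_n=1$ on $[1,T_1]$, and since trades occur exactly when $S_n=\alpha_n$ (Corollary~\ref{cor1}), no trade can happen --- contradiction. You instead do direct bookkeeping on the volume at the execution level $u=S(\tau_1)\leq0$: since $V(\cdot,u)$ starts at $0$ (or is cleared at step $1$ when $u=0$, where you correctly note $\tau_1\geq2$) and the dynamics~(\ref{dynamics}) only ever increase $V(\cdot,u)$ through a placement at a time $n$ with $S(n-1)=u-\mu$, the positivity $V(\tau_1,u)>0$ forces some $S(n-1)\leq-\mu$ with $n-1\leq\tau_1-1$. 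Your version is more self-contained --- it does not rely on the $\alpha_n$ machinery or on the lemma identifying $\{V(n,u)>0\}$ with $\{u\geq\alpha_n\}$ --- and it is constructive in that it exhibits the time at which the walk sits at depth $\mu$ below the eventual execution level. The paper's version is shorter given the best-ask lemmas already established, and reuses that machinery consistently with the rest of Section~\ref{subsec:best}. Both arguments are sound; your case split at $u=0$ versus $u<0$ and the observation that the cases $u=S_{n-1}$ and $u=S_{n-1}+\mu$ cannot coincide (since $\mu\geq1$) are exactly the points that need care, and you handle them.
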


\begin{proof}
Assume by contradiction there is a time $n\in[0,T_1]$ with $S_n>0$.
We can assume that this $n$ is the minimal number with this property.
Then we had $V(n,S_n)>0$ and $S_n>S_0$, thus the first trade
would be a Type~I trade.
Assume next by contradiction that $-\mu<S_n\leq0$ for all $n\in[0,T_1]$.
Then the recursion for $\alpha_n$ shows $\alpha_n=1$ for all $n\in[1,T_1]$
and no trade could take place.
\end{proof}

\begin{lemma}
\label{EarlyII} Suppose $0\leq a\leq b$ and S(a)=S(b). If
\begin{equation}
\quad\min\{S(a),\ldots,S(b)\} < S(a)-\mu,
\end{equation}
there is a Type~II trade followed by a Type~I trade in $[a,b]$.
Moreover, if 
\begin{equation}
\quad\min\{S(a),\ldots,S(b)\}=S(a)-\mu,
\end{equation}
there is a Type~II trade at time $b$.
\end{lemma}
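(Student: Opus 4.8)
The plan is to work entirely with the best ask process $\alpha_n$, using Corollary~\ref{cor1} (a trade occurs at $n$ iff $\alpha_n=S_n$) together with three bookkeeping facts read off the recursion (\ref{bestAsk}): $\alpha$ rises by $1$ exactly at a trade, falls by $1$ exactly when $\alpha_n=S_n+\mu+1$, and $\alpha_{n+1}\le S_n+\mu$ always (a fresh order is placed at $S_n+\mu$). Hence, for consecutive trades $\tau_{i-1}<\tau_i$ with $d\ge0$ downward adjustments in between, $S(\tau_i)=S(\tau_{i-1})+1-d$, so $\tau_i$ is Type~I iff $d=0$ and Type~II iff $d\ge1$; and since the first downward adjustment after $\tau_{i-1}$ occurs precisely when $S=S(\tau_{i-1})-\mu$, the trade $\tau_i$ is Type~II iff the price falls to $\le S(\tau_{i-1})-\mu$ on $(\tau_{i-1},\tau_i)$. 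This is the rigorous reading of ``drop by $\mu$, then rise by $\mu$''. After shifting so that $S(a)=S(b)=0$, I record that the last trade $\tau'$ at or before $a$ satisfies $S(\tau')\ge0$: such a trade exists (the book starts full, so $n=0$ is a trade); if a trade occurs at $a$ then $\tau'=a$ and $S(\tau')=0$, and otherwise $\alpha_a\ge1$ while $\alpha$ is nonincreasing on $(\tau',a]$, giving $S(\tau')+1=\alpha_{\tau'+1}\ge\alpha_a\ge1$.

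First I would prove existence of a Type~II trade in $[a,b]$ by contradiction. If every trade in $[a,b]$ were Type~I, consecutive trade levels would increase by $1$ from $S(\tau')\ge0$, so every trade level in the window is $\ge0$ and on each inter-trade interval the price stays $>(\text{left level})-\mu\ge-\mu$. But the minimum $-m$ with $m\ge\mu$ is attained at some $t_{\min}\in[a,b]$; locating $t_{\min}$ in its inter-trade interval gives $S_{t_{\min}}>-\mu$, contradicting $S_{t_{\min}}=-m\le-\mu$ (the sub-case where $t_{\min}$ lies after the last window trade is handled separately: there $\alpha$ is nonincreasing with $\alpha_{t_{\min}+1}\le-m+\mu\le0$, yet no later trade forces $\alpha_b\ge1$, impossible). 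The virtue of this step is that it never uses the actual predecessor level: merely descending $\mu$ below the start is incompatible with an all-Type~I run.

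For the first assertion ($m>\mu$) I would then exhibit the Type~I trade and order the two. Let $t^\star$ be the last visit to $-m$. Since $\alpha_{t^\star+1}\le S_{t^\star}+\mu=-m+\mu\le-1$ and the price stays $>-m$ afterwards, no downward adjustment can occur right after $t^\star$ (it would need $S\le-m-1$), so $\alpha$ is constant until the price first rises to meet it: a trade at level $c\le-m+\mu<0$, after which $\alpha\equiv c+1$ (again no adjustment, as $c+1-(\mu+1)\le-m$) until the price, which must reach $S(b)=0>c+1$, hits $c+1$ — a trade at level $c+1>c$, hence Type~I, by time $b$. Applying the contradiction argument of the previous paragraph to the trades up to the level-$c$ trade produces a Type~II trade at or before it, hence strictly before this Type~I trade, giving a Type~II trade followed by a Type~I trade.

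For the second assertion ($m=\mu$), no order is ever placed below level $0$ on $[a,b]$ (an order at level $\ell$ comes from price $\ell-\mu$, which would need $S<-\mu$), so there is no trade below level $0$; when $b$ is the first return to $S(a)$ the price is strictly negative on $(a,b)$, so there is no trade on $(a,b)$ at all, the order sitting at level $0$ (placed at the visit to $-\mu$) is hit exactly at $b$, and since its predecessor has level $\ge0=S(b)$ this trade is Type~II. The main obstacle throughout is precisely the Type~I/Type~II classification: the naive comparison of $S(\tau_i)$ with $S(\tau_{i-1})$ stumbles on the boundary case $d=0$ and on predecessors lying before $a$, and it is the decrement-counting criterion of the first paragraph together with the global ``all-Type~I is impossible'' contradiction that sidesteps this cleanly.
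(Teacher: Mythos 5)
Your route is genuinely different from the paper's. The paper's proof takes $\beta$ to be the \emph{first} hitting time of the minimum $\gamma$, notes that an order then sits at level $\gamma+\mu$, and lets it be executed at the first return to that level, leaving the Type~I/Type~II classification of the resulting trades essentially implicit. You instead extract from the recursion (\ref{bestAsk}) and Corollary~\ref{cor1} the clean criterion that a trade $\tau_i$ is Type~II iff the price reaches $S(\tau_{i-1})-\mu$ on $(\tau_{i-1},\tau_i)$, then run the ``an all-Type~I run cannot descend $\mu$ below its starting level'' contradiction and explicitly construct the subsequent Type~I trade at level $c+1$. For the first assertion this is correct and in fact more complete than the paper's sketch: you exhibit the Type~I trade and order the two events, which the paper does not do, and the bookkeeping (the predecessor level $S(\tau')\ge S(a)$, the absence of downward adjustments of $\alpha$ after the last visit to the minimum) checks out.

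The second assertion is where there is a genuine discrepancy: you prove it only ``when $b$ is the first return to $S(a)$'', a hypothesis absent from the statement. Be aware that this restriction is not cosmetic — the unrestricted claim is false. Take $\mu=1$ and $S=(0,-1,0,1,0)$ with $a=0$, $b=4$: then $S(a)=S(b)=0$ and $\min=-1=S(a)-\mu$, but tracing the dynamics gives $\alpha_1=1$, $\alpha_2=0$ (trade at $2$), $\alpha_3=1$ (trade at $3$), $\alpha_4=2>S_4$, so there is no trade at $b=4$ at all; the Type~II trade occurs at time $2$, the first return. So your instinct to impose the first-return hypothesis is the right one — and it is all that the paper's applications in Lemma~\ref{LemA} and Proposition~\ref{prop:decomp} actually use, since they only need \emph{some} Type~II trade in the window — but as submitted your argument does not establish the second assertion as literally stated, and you should have flagged explicitly that the statement needs this extra hypothesis (or a weakened conclusion ``a Type~II trade in $[a,b]$''). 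A smaller point in the same paragraph: ``no order is ever placed below level $0$ on $[a,b]$'' only rules out orders placed \emph{during} the window; to exclude trades below level $0$ you also need that no such volume is present at time $a$, which follows from $\alpha_a\ge S_a=0$ together with the identification $V(n,u)>0$ iff $u\ge\alpha_n$.
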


\begin{proof}
Let
\begin{equation}
\gamma=\min(S(a),\ldots,S(b),\quad
\beta=\min\{n\in[a,b]:S(n)=\gamma\},\quad,
\end{equation}
and
\begin{equation}
\delta=\min\{n\in[\beta,b]:S(n)=\gamma+\mu\}.
\end{equation}
At time $\beta$ a new order is placed at the price level $S_n+\mu=\gamma+\mu$, and therefore we have $V(\beta+1,\gamma+\mu)>0$. This order at price level $\gamma+\mu$ get executed at time $\delta$.

The second part of the lemma follows directly from the definition of the Type~II trade.

\end{proof}

\begin{lemma}
\label{LemA} 
The first trade after zero is a Type~I trade iff
\begin{equation}
\label{MaxI}\max(S(0),\ldots,S(T_{1}-1)\}=0,\quad S(T_{1})=1
\end{equation}
and
\begin{equation}
\label{MinI}\min(S(0),\ldots,S(T_{1}-1))>-\mu.
\end{equation}

\end{lemma}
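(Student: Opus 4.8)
The plan is to prove the two implications separately, the backward one being essentially immediate and the forward one resting on the identification of $T_1$ with the first strict ascending ladder time of the walk.

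For the backward direction I would simply note that conditions (\ref{MaxI})--(\ref{MinI}) contain $S(T_1)=1>0=S(\tau_0)$. Since $T_1=\tau_1$ is by definition the first trading time, the inequality $S(\tau_1)>S(\tau_0)$ is exactly the defining property of a Type~I trade, so the first trade is Type~I. Note that the maximum and minimum conditions are not needed for this direction.

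For the forward direction, assume the first trade is Type~I, i.e. $S(\tau_1)>S(0)=0$, hence $S(T_1)\geq1$. I would introduce $\sigma=\inf\{n\geq1:S_n=1\}$, the first passage time to level~$1$, which (because $S_0=0$ and the walk moves by $\pm1$) is precisely the first strict ascending ladder time. The first step is to pin down $T_1=\sigma$. Since $S(\tau_1)\geq1$, the walk attains level~$1$ no later than $\tau_1$, so $\sigma\leq\tau_1<\infty$; and since $\sigma$ is a strict ascending ladder time, Lemma~\ref{EasyI} yields a trade at $\sigma$, whence $\tau_1\leq\sigma$ as $\tau_1$ is the earliest trade. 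Combining gives $T_1=\tau_1=\sigma$. This upgrades $S(T_1)\geq1$ to the exact value $S(T_1)=S(\sigma)=1$; moreover the walk stays $\leq0$ on $[0,\sigma-1]$ with $S_0=0$, so $\max(S(0),\ldots,S(T_1-1))=0$, establishing (\ref{MaxI}).

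It remains to prove the depth bound (\ref{MinI}), which is the step I expect to be the main obstacle. Here I would argue by contradiction: suppose $\min(S(0),\ldots,S(T_1-1))\leq-\mu$. Because $S_\sigma=1$ is a first passage, the preceding value is forced to be $S_{\sigma-1}=0$, so I may apply Lemma~\ref{EarlyII} with $a=0$ and $b=\sigma-1=T_1-1$, for which $S(a)=S(b)=0$. If the minimum is strictly below $-\mu=S(a)-\mu$, the lemma produces a Type~II trade (followed by a Type~I trade) inside $[0,\sigma-1]$; if it equals $-\mu=S(a)-\mu$, the lemma produces a Type~II trade at time $\sigma-1$. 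In either case a trade occurs no later than $\sigma-1<\sigma=\tau_1$, contradicting the fact that $\tau_1$ is the first trading time. Hence $\min(S(0),\ldots,S(T_1-1))>-\mu$, which is (\ref{MinI}), completing the forward direction. The delicate point is exactly the verification of the boundary value $S_{\sigma-1}=0$, so that the hypothesis $S(a)=S(b)$ of Lemma~\ref{EarlyII} is genuinely met; the rest is bookkeeping with the definitions.
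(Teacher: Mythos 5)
Your proof is correct and follows essentially the same route as the paper's: the forward direction rests on identifying $T_1$ with the first strict ascending ladder time via Lemma~\ref{EasyI} and on ruling out an earlier trade of Type~II via Lemma~\ref{EarlyII}, exactly as the paper does. Your converse is in fact slightly more direct than the paper's (which re-invokes Lemmas~\ref{EasyI} and~\ref{down}): since $T_1=\tau_1$ is by definition the first trading time, the condition $S(T_1)=1>0=S(\tau_0)$ alone already makes that trade Type~I.
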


\begin{proof}
Suppose that the first trade is Type~I trade. Then there is no Type~II trade
during the time interval $[0,T_1]$, and thus by Lemma~\ref{EarlyII} inequality~(\ref{MinI}) holds. Note that the price level 1 is the strict ladder time, and therefore,  according to Lemma~\ref{EasyI}, a Type~I trade occurs  at the price level 1. Since the first trade occurs at time $T_1$,  it directly implies that (\ref{MaxI}) holds. Conversely, assume that (\ref{MinI}) and (\ref{MaxI}) hold. Then, by Lemma~\ref{EasyI} a Type~I trade ocurs at time  $T_1$, and there is no earlier Type~I trade. Further, Lemma~\ref{down} shows there is no Type~II trade earlier.
\end{proof}

\subsection{Generating function for the full avalanche length}
\begin{proposition}
\label{prop:pgfT}
The generating function of the time to the next trade is given by
\begin{equation}
\label{gfT1}
E[z^{T_1}]=A_{\mu}(z)+\frac{B_{\mu}(z)}{1-B_{\mu}(z)}\cdot C_{\mu}(z).
\end{equation}
\end{proposition}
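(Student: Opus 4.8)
The plan is to split the expectation according to the type of the first trade,
\begin{equation}
E[z^{T_1}]=E[z^{T_1};\,\text{first trade Type~I}]+E[z^{T_1};\,\text{first trade Type~II}],
\end{equation}
and to recognise each summand as a generating function assembled from the classes $\mathscr{A}_\mu,\mathscr{B}_\mu,\mathscr{C}_\mu$ of Lemma~\ref{GFABC}. The key bookkeeping fact I would use is that a fixed symmetric random-walk path of length $n$ carries probability $2^{-n}$, so that the probability generating function of $T_1$ restricted to any combinatorial class of first-trade excursions coincides with the generating function of that class with the weight $2^{-n}$ per path built in, which is exactly what $A_\mu,B_\mu,C_\mu$ encode.

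For the Type~I term I would invoke Lemma~\ref{LemA}, which says the first trade is a Type~I trade landing at time $n$ precisely when $\max(S_0,\ldots,S_{n-1})=0$, $S_n=1$ and $\min(S_0,\ldots,S_{n-1})>-\mu$; these are verbatim the defining constraints of $\mathscr{A}_{n,\mu}$. Hence the event that the first trading excursion $(S_0,\ldots,S_{T_1})$ is a Type~I excursion of length $n$ is exactly $\{(S_0,\ldots,S_n)\in\mathscr{A}_{n,\mu}\}$, and summing $z^n$ over these paths gives $E[z^{T_1};\,\text{Type~I}]=A_\mu(z)$ at once.

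For the Type~II term I would use the path decomposition of Proposition~\ref{prop:decomp}: every Type~II first excursion splits uniquely into $K\geq1$ consecutive blocks from $\mathscr{B}_\mu$ followed by one terminal block from $\mathscr{C}_\mu$, and the cut points $\ell_k,\ell^*$ are determined by the path. Because concatenation adds lengths and multiplies the $2^{-n}$ weights across disjoint time segments, the contribution of all Type~II excursions with exactly $K$ blocks is $B_\mu(z)^K C_\mu(z)$; summing the geometric series over $K\geq1$ produces $\tfrac{B_\mu(z)}{1-B_\mu(z)}C_\mu(z)$. Adding the two contributions yields~(\ref{gfT1}).

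The main obstacle is that Proposition~\ref{prop:decomp} only gives one direction of the correspondence, whereas the geometric-series step needs a genuine bijection; I therefore must establish the converse, namely that any concatenation of $K$ elements of $\mathscr{B}_\mu$ with a terminal element of $\mathscr{C}_\mu$ really is a Type~II first excursion, with no trade occurring before its final step. I expect to argue this from the lemmas of this subsection: each $\mathscr{B}_\mu$ block keeps the glued path non-positive and of depth strictly less than $\mu$ relative to its own start, so Lemma~\ref{EarlyII} forbids a completed Type~II trade inside the descending cascade while non-positivity together with Lemma~\ref{EasyI} rules out any Type~I trade there, and the defining minimum $-\mu+1$ of the $\mathscr{C}_\mu$ block places the order whose execution is exactly the terminal Type~II trade. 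A secondary point to check is that these weights are genuine probabilities summing correctly, that is, that the first trade occurs in finite time almost surely so that $E[z^{T_1}]$ is a bona fide probability generating function.
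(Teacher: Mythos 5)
Your proposal is correct and follows essentially the same route as the paper: split $E[z^{T_1}]$ by trade type, identify the Type~I excursions with $\mathscr{A}_\mu$ via Lemma~\ref{LemA} and the Type~II excursions with $\SEQ_{\geq1}(\mathscr{B}_\mu)\times\mathscr{C}_\mu$ via Proposition~\ref{prop:decomp}, then sum the geometric series (the paper packages this last step as an appeal to the symbolic method of Flajolet--Sedgewick). Your observation that the converse direction of the correspondence needs to be checked is well taken --- the paper simply asserts the decomposition is bijective --- so your sketch of that verification is a refinement of, not a departure from, the published argument.
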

\begin{proof}
Let us define the set $\mathscr{T}_{\mu,n}$ of all paths where the first trade occurs at step $n$.
and $\mathscr{T}_\mu=\bigcup_{n\geq1}\mathscr{T}_{\mu,n}$.
This set contains elements that are either Type~I trades, and thus correspond
by Lemma~\ref{LemA} to the class $\mathscr{A}_\mu$ or Type~II trades, which correspond
by the path decomposition in Proposition~\ref{prop:decomp} bijectively to the Cartesian product
of the set of finite sequences of length $1$ or more of elements in $\mathscr{B}_\mu$ times
the class $\mathscr{C}_\mu$.
Using the symbolic notation from \cite[Section~I]{FS} this is written more clearly as
\begin{equation}
\label{eq:classT}
\mathscr{T}_\mu=\mathscr{A}_\mu\cup
\SEQ_{\geq1}(\mathscr{B}_\mu)\times\mathscr{C}_\mu.
\end{equation}
From \cite[Theorem~I.1, P.27]{FS} and the section on {\em restricted constructions}, in particular
$\SEQ_{\geq k}$, in \cite[P.30]{FS}, we obtain (\ref{gfT1}).
\end{proof}
Once the law of $T_1$ is found, we can compute the
avalanche length distribution.
\begin{theorem}
\label{thm:full}
The full avalanche length for the symmetric random walk has probability
generating function
\begin{equation}\label{eq:full}
E[z^{L^{*}_{\mu\varepsilon}}]=\frac{P[T_1>\varepsilon]}{E[1-z^{T_1};T_1\leq\varepsilon
]+P[T_1>\varepsilon]},
\end{equation}
where $T_1$ has the probability generating function given in (\ref{gfT1}) above.
\end{theorem}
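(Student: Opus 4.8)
The plan is to repeat, essentially verbatim, the argument used for Proposition~\ref{prop1}, but now with the full intertrading times $T_i$ in place of the simplified intrading times $R_i$. The only genuinely new ingredient that must be supplied is that the $T_i$ are independent and identically distributed; once that is in hand, everything reduces to a geometric-series manipulation identical to the simplified case.

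First I would record the iid property, which is the real content of the proof. The $i$-th intertrading time $T_i$ is precisely the length (terminal index) of the $i$-th trading excursion $e_i$, since $e_i$ is the path $e_{in}=S(\tau_{i-1}+n)-S(\tau_{i-1})$ for $0\leq n\leq T_i$. The trading excursions $(e_i)_{i\geq1}$ were already shown to be iid, as the $\tau_i$ are stopping times and the claim follows from the strong Markov property of the random walk. Hence $(T_i)_{i\geq1}$ is iid as well, with common law encoded by the generating function~(\ref{gfT1}) derived from the path decomposition of Proposition~\ref{prop:decomp}. This replaces the appeal to Feller's ladder-time independence that was used in Proposition~\ref{prop1}.

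Next, starting from the defining relation $L^{*}_{\mu,\varepsilon}=T_1+\cdots+T_k$ on the event $\{T_1\leq\varepsilon,\ldots,T_k\leq\varepsilon,T_{k+1}>\varepsilon\}$ (with the empty-sum convention covering the degenerate case $k=0$), I would decompose
\begin{equation}
E[z^{L^{*}_{\mu,\varepsilon}}]=\sum_{k\geq0}E\big[z^{T_1}\cdots z^{T_k};T_1\leq\varepsilon,\ldots,T_k\leq\varepsilon,T_{k+1}>\varepsilon\big].
\end{equation}
By independence this factorizes as $\sum_{k\geq0}E[z^{T_1};T_1\leq\varepsilon]^{k}\,P[T_{k+1}>\varepsilon]$, and summing the geometric series gives $P[T_1>\varepsilon]/(1-E[z^{T_1};T_1\leq\varepsilon])$. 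Writing $1=E[1;T_1\leq\varepsilon]+P[T_1>\varepsilon]$ in the denominator and collecting terms yields the claimed form, namely $P[T_1>\varepsilon]\big/(E[1-z^{T_1};T_1\leq\varepsilon]+P[T_1>\varepsilon])$; substituting the explicit expression~(\ref{gfT1}) for the law of $T_1$ then makes the result fully explicit.

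There is essentially no hard analytic step in this statement: the algebra is identical to that of the simplified avalanche length. The only point that requires care, and the place where the earlier structural results do the work, is the iid claim for $(T_i)$. Thus the main obstacle is not the present computation but the groundwork already laid in Proposition~\ref{prop:decomp} and the stopping-time structure of the $\tau_i$, which together guarantee that each intertrading block is a fresh, independent copy drawn from a single common distribution.
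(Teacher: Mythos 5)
Your proposal is correct and follows essentially the same route as the paper, which simply states that the result follows from the argument of Proposition~\ref{prop1} with $R$ replaced by $T$. You in fact supply more detail than the paper does, by explicitly justifying the iid property of $(T_i)_{i\geq1}$ via the iid trading excursions and the stopping-time structure of the $\tau_i$, which is exactly the point the paper leaves implicit.
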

\begin{proof}
This follows exactly from the same arguments as in the proof of Proposition~\ref{prop1} with $R$ replaced by $T$.
\end{proof}
Let us now consider also the following quantities:
The time to the first trade, assuming it is a Type~I trade, that is, $T_1$ on $\{S(T_1)>0\}$;
the time to the first trade, assuming it is a Type~II trade, that is,  $T_1$ on $\{S(T_1)\leq0\}$;
and the time to the first Type~II trade.

Furthermore, denote by $D$ the index of the first Type~II trade in the sequence of all trades, i.e.:
\begin{equation}
D=\inf\{i\geq1:S(\tau_{i})<S(\tau_{i-1})\}.
\end{equation}
Consequently the time to the first Type~II trade is $\tau_D.$
\begin{lemma}%
We have \footnote{We use semicolon notation, $E[X;A]=E[XI_A]$ for an integrable random variable X and an event A.}
\begin{equation}\label{firstType1}
E[z^{T_1};S(T_1)>0]=A_{\mu}(z), \quad
E[z^{T_1};S(T_1)\leq0]=\frac{B_{\mu}(z)}{1-B_{\mu}(z)}\cdot C_{\mu}(z) 
\end{equation}
and
\begin{equation}
E[z^D]=\frac1{1-E[z^{T_1};S(T_1)\leq0]}\cdot E[z^{T_1};S(T_1)\leq0].
\end{equation}
\end{lemma}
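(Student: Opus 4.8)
The plan is to prove each of the three identities in turn, drawing directly on the generating-function computations already established in Proposition~\ref{prop:pgfT} and its proof.

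\paragraph{The two conditional transforms.}
First I would observe that the first equality in (\ref{firstType1}) is essentially a byproduct of the decomposition (\ref{eq:classT}) used in the proof of Proposition~\ref{prop:pgfT}. There the class $\mathscr{T}_\mu$ of paths ending at the first trade was split as $\mathscr{T}_\mu=\mathscr{A}_\mu\cup\bigl(\SEQ_{\geq1}(\mathscr{B}_\mu)\times\mathscr{C}_\mu\bigr)$, where the $\mathscr{A}_\mu$ component corresponds (by Lemma~\ref{LemA}) precisely to the Type~I trades, i.e.\ to the event $\{S(T_1)=1\}=\{S(T_1)>0\}$, and the $\SEQ_{\geq1}(\mathscr{B}_\mu)\times\mathscr{C}_\mu$ component corresponds (by Proposition~\ref{prop:decomp}) precisely to the Type~II trades, i.e.\ to $\{S(T_1)\leq0\}$. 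Since these two sets of paths partition $\mathscr{T}_\mu$ according to the sign of $S(T_1)$, restricting the generating-function sum to each piece yields the weighted transforms $E[z^{T_1};S(T_1)>0]$ and $E[z^{T_1};S(T_1)\leq0]$. Translating the ordinary generating functions into probability generating functions via the substitution $z\mapsto z/2$ (as noted in the text, together with the $\SEQ_{\geq1}$ rule from \cite[P.30]{FS}) gives $A_\mu(z)$ for the first piece and $\tfrac{B_\mu(z)}{1-B_\mu(z)}C_\mu(z)$ for the second. These are exactly the two summands appearing in (\ref{gfT1}), so (\ref{firstType1}) is really just the statement that Proposition~\ref{prop:pgfT} splits term-by-term along the Type~I/Type~II dichotomy, and no new computation beyond bookkeeping is required.

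\paragraph{The transform of $D$.}
For the transform of $D$, I would use the iid structure of the trading excursions established in the excerpt (the lemma stating the $e_i$ are iid). The index $D=\inf\{i\geq1:S(\tau_i)<S(\tau_{i-1})\}$ is the first time a Type~II trade occurs, so the excursions $e_1,\dots,e_{D-1}$ are all Type~I and $e_D$ is Type~II. Writing $p_{\mathrm{I}}(z)=E[z^{T_1};S(T_1)>0]$ and $p_{\mathrm{II}}(z)=E[z^{T_1};S(T_1)\leq0]$ for the two (defective) transforms above, and using that $\tau_D=T_1+\dots+T_D$ with the $T_i$ independent, I would condition on $\{D=k\}$: the first $k-1$ trades are Type~I and the $k$-th is Type~II. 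However, the stated formula $E[z^D]=\tfrac{1}{1-p_{\mathrm{II}}(z)}\,p_{\mathrm{II}}(z)$ only involves $p_{\mathrm{II}}$, which signals that the variable being tracked is $\tau_D$ measured through a different filtration — namely one in which the Type~I excursions are \emph{not} weighted by $z$. The cleanest reading is that the factor $\tfrac{1}{1-p_{\mathrm{II}}(z)}=\sum_{j\geq0}p_{\mathrm{II}}(z)^j$ geometrically sums over the number of preceding Type~II-type contributions; I would therefore set up the geometric series $\sum_{k\geq1}(\cdot)^{k-1}p_{\mathrm{II}}(z)$ and read off that it telescopes to $\tfrac{p_{\mathrm{II}}(z)}{1-p_{\mathrm{II}}(z)}$.

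\paragraph{Main obstacle.}
The main obstacle is interpretive rather than computational: reconciling the stated formula for $E[z^D]$ with the natural guess $E[z^{\tau_D}]=\tfrac{p_{\mathrm{II}}(z)}{1-p_{\mathrm{I}}(z)}$ that the iid excursion decomposition would suggest. The given right-hand side uses $p_{\mathrm{II}}$ in the denominator rather than $p_{\mathrm{I}}$, so I would need to pin down precisely what ``$z^D$'' encodes here — whether it is the counting generating function of the integer index $D$ with the defective Type~II mass serving as per-step weight, or a transform of $\tau_D$ restricted to a conditioned sub-population of excursions. Determining the exact random variable and filtration under which the geometric sum closes is the delicate step; once that is fixed, the remaining algebra is the routine summation of a geometric series and poses no difficulty.
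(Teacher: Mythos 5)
Your derivation of the two identities in (\ref{firstType1}) is correct and is essentially identical to the paper's own argument: the paper likewise reads them off term-by-term from the decomposition $\mathscr{T}_\mu=\mathscr{A}_\mu\cup\SEQ_{\geq1}(\mathscr{B}_\mu)\times\mathscr{C}_\mu$ of (\ref{eq:classT}), invoking Lemma~\ref{LemA} for the Type~I piece, Proposition~\ref{prop:decomp} for the Type~II piece, and the Flajolet--Sedgewick translation into probability generating functions. Nothing is missing there.

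For the third identity your hesitation is justified, and you should push it to its conclusion rather than leave it as an ``interpretive obstacle'': the displayed formula is false as stated for $\mu\geq2$, so no proof of it can close. Indeed, at $z=1$ one has $E[z^{T_1};S(T_1)\leq0]=P[S(T_1)\leq0]=\tfrac{1}{\mu+1}$ (consistent with the corollary $P[S(T_1)>0]=\tfrac{\mu}{\mu+1}$), so the right-hand side evaluates to $\tfrac{1/(\mu+1)}{1-1/(\mu+1)}=\tfrac{1}{\mu}$, whereas $E[1^D]=P[D<\infty]=1$ because each trade is independently of Type~II with positive probability. The iid excursion structure, summed exactly as in your geometric-series setup via $E[z^{\tau_D};D=k]=p_{\mathrm{I}}(z)^{k-1}p_{\mathrm{II}}(z)$, gives
\begin{equation*}
E[z^{\tau_D}]=\frac{E[z^{T_1};S(T_1)\leq0]}{1-E[z^{T_1};S(T_1)>0]}
=\frac{1}{1-A_{\mu}(z)}\cdot\frac{B_{\mu}(z)\,C_{\mu}(z)}{1-B_{\mu}(z)},
\end{equation*}
which does equal $1$ at $z=1$; and for the index itself $E[z^D]=\tfrac{z\,P[S(T_1)\leq0]}{1-z\,P[S(T_1)>0]}$. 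The paper's own one-line justification (that the path to the first Type~II trade corresponds to $\SEQ(\mathscr{B}_\mu)\times\mathscr{C}_\mu$) does not produce the displayed formula either, so the defect lies in the statement, not in your reasoning. The only genuine gap in your proposal is that you stop at describing the tension instead of committing to the corrected identity and carrying out the (routine) geometric summation.
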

\begin{proof}
By following Lemma~\ref{LemA} a Type~I trading excursion corresponds to an element in $\mathscr{A}_\mu$, and therefore by \cite[Theorem~I.1, P.27]{FS} the left equation in (\ref{firstType1}) holds. By Proposition~\ref{prop:decomp} path of a Type~II trading excursion corresponds to the second term on the right hand side of (\ref{eq:classT}),  and thus by \cite[Theorem~I.1, P.27]{FS} the right equation in (\ref{firstType1}) holds. 
The path to the next Type~II trades consists of a (possible empty) sequence of
Type~I trading excursions followed by a Type~II trading excursions, thus
to $\SEQ(\mathscr{B}_\mu)\times\mathscr{C}_\mu$.
We can apply again \cite[Theorem~I.1, P.27]{FS} to obtain the corresponding generating functions.
\end{proof}
\begin{corollary}
\begin{equation}
P[S(T_1)>0]=\frac{\mu}{\mu+1}
\end{equation}
\end{corollary}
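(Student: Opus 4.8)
The plan is to read the answer directly off the generating function for Type~I trading excursions that was just established. By the preceding lemma we have $E[z^{T_1};S(T_1)>0]=A_{\mu}(z)$. Since the simple symmetric random walk is recurrent, it reaches level $1$ almost surely, so by Lemma~\ref{EasyI} a trade occurs in finite time, i.e. $T_1<\infty$ a.s. Consequently, setting $z=1$ gives
\[
P[S(T_1)>0]=E[1;S(T_1)>0]=A_{\mu}(1),
\]
and the whole problem reduces to evaluating the generating function $A_{\mu}$ at $s=1$.

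The first thing I would note is that naive substitution of $s=1$ into (\ref{gfa}) produces the indeterminate form $0/0$: as $s\to1$ we have $\sqrt{1-s^2}\to0$, hence $\lambda_1(s)\to1$ and $\lambda_2(s)\to1$, so numerator and denominator both vanish. The clean way to resolve this is to exploit the identity $\lambda_1(s)\lambda_2(s)=1$, which is immediate from the definitions since $\lambda_1\lambda_2=(1-(1-s^2))/s^2=1$. Writing $\lambda_1(s)=e^{\theta}$, so that $\lambda_2(s)=e^{-\theta}$ with $\theta=\theta(s)\to0$ as $s\to1$, the generating function becomes
\[
A_{\mu}(s)=\frac{e^{\mu\theta}-e^{-\mu\theta}}{e^{(\mu+1)\theta}-e^{-(\mu+1)\theta}}=\frac{\sinh(\mu\theta)}{\sinh((\mu+1)\theta)}.
\]
Letting $\theta\to0$ and using $\sinh(k\theta)\sim k\theta$ then yields $A_{\mu}(1)=\mu/(\mu+1)$, which is the claim. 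Equivalently, one may apply L'H\^opital's rule to (\ref{gfa}) directly, differentiating numerator and denominator in $s$ and evaluating at $s=1$.

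There is essentially no hard step here; the only subtlety, and the one I would be most careful about, is the indeterminate form, which the hyperbolic substitution dispatches cleanly. As a consistency check I would also evaluate $E[z^{T_1};S(T_1)\leq0]$ at $z=1$ using $B_{\mu}=A_{\mu}$ and $C_{\mu}=A_{\mu}-A_{\mu-1}$: one finds $B_{\mu}(1)=\mu/(\mu+1)$ and $C_{\mu}(1)=1/(\mu(\mu+1))$, so that $\tfrac{B_{\mu}(1)}{1-B_{\mu}(1)}\cdot C_{\mu}(1)=1/(\mu+1)$. The two probabilities indeed sum to one, confirming $P[S(T_1)>0]=\mu/(\mu+1)$ and, as a byproduct, that the first trade is Type~II with probability $1/(\mu+1)$.
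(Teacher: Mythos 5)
Your proof is correct and follows the same route as the paper, which simply says ``set $z=1$ in the previous result''; you additionally carry out the evaluation of the indeterminate form $A_{\mu}(1)$ via $\lambda_1\lambda_2=1$ and the $\sinh$ substitution, and your consistency check that the two probabilities sum to one is a nice (correct) bonus. No gaps.
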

\begin{proof}
We set $z=1$ in the previous result.
\end{proof}

\subsection{The particular probabilities}
By combining (\ref{gfT1}) with (\ref{gfa}), (\ref{gfb}) and (\ref{gfc})  we have
\begin{equation}\label{Tmu}
\begin{split}
T_{\mu}(z)=A_{\mu}(z)+\frac{A_{\mu}(z)}{1-A_{\mu}(z)}(A_{\mu}(z)-A_{\mu-1}(z))=\frac{A_{\mu}-A_{\mu}(z)A_{\mu-1}(z)}{1-A_{\mu}(z)}.
\end{split}
\end{equation}
\begin{table}
\begin{center}
\begin{tabular}{|l||*{10}{c|}}
\hline
\backslashbox{$\mu$}{$n$}&\makebox[1.5em]{1}&\makebox[1.5em]{2}&\makebox[1.5em]{3}
&\makebox[1.5em]{4}&\makebox[1.5em]{5}&\makebox[1.5em]{6}&\makebox[1.5em]{7}&\makebox[1.5em]{8}&\makebox[1.5em]{9}&\makebox[1.5em]{10}\\\hline\hline
1 &1/2&1/4&1/8&1/16&1/32&1/64&1/128&1/256&1/512&1/1024\\\hline
2 &1/2&0&1/8&1/16&1/16&3/64&5/128&1/32&13/512&21/1024\\\hline
3&1/2&0&1/8&0&1/16&1/64&5/128&5/256&7/256&19/1024\\\hline
4&1/2&0&1/8&0&1/16&0&5/128&1/256&7/256&7/1024\\\hline
5&1/2&0&1/8&0&1/16&0&5/128&0&7/256&1/1024\\\hline
6&1/2&0&1/8&0&1/16&0&5/128&0&7/256&0\\\hline
7&1/2&0&1/8&0&1/16&0&5/128&0&7/256&0\\\hline
\end{tabular}
\end{center}
\caption{The probabilities $p_n=P[T_1=n]$ for
 $n=1,...,10$ and $\mu=1,...,7$}\label{Tablepn}
\end{table}

For  $\mu=1,2,3,4,5,6,7$, we use series expansion and expand the corresponding formula (\ref{Tmu}). Thus, we obtain probabilities $p_n=P[T_1=n]$,  for particular values see Table~\ref{Tablepn}, when $n=1,...,10$ and $\mu=1,...,7$. Furthermore, the following holds:
\begin{equation}
T_{\mu}(z)=\sum_{n\geq0}(\sum_{k=0}^{n}p_k)z^n(1-z).
\end{equation}

Let  $q_n=P[T_1>n]$, i.e. $q_n=\sum_{k\geq n+1}P[T_1=k]$, and let $Q_{\mu}(z)=\sum_{n\geq0}q_nz^n$. Then, we have $$Q_{\mu}(z)=\sum_{n\geq0}(\sum_{k\geq n+1}p_k)z^n=\frac{1-T_{\mu}(z)}{1-z}.$$
For particular  values of probabilities $q_{\varepsilon}=P[T_1>{\varepsilon}]$ when
 ${\varepsilon}=1,...,10,11$ and $\mu=1,...,7$ see Table~\ref{Tableqn}.

\begin{table}[h]
\begin{center}
\begin{tabular}{|l||*{9}{c|}}
\hline
\backslashbox{$\mu$}{$\varepsilon$}&\makebox[1.5em]{1}&\makebox[1.5em]{2}&\makebox[1.5em]{3}
&\makebox[1.5em]{4}&\makebox[1.5em]{5}&\makebox[1.5em]{6}&\makebox[1.5em]{7}&\makebox[1.5em]{8}&\makebox[1.5em]{9}\\\hline\hline
1 &1/2&1/4&1/8&1/16&1/32&1/64&1/128&1/256&1/512\\\hline
2 &1/2&1/2&3/8&5/16&1/4&13/64&21/128&17/128&55/512\\\hline
3&1/2&1/2&3/8&3/8&5/16&19/64&33/128&61/256&27/128\\\hline
4&1/2&1/2&3/8&3/8&5/16&5/16&35/128&69/256&31/128\\\hline
5&1/2& 1/2& 3/8&3/8& 5/16&5/16&35/128&35/128& 63/256\\\hline
6&1/2&1/2& 3/8&3/8& 5/16& 5/16& 35/128& 35/128& 63/256\\\hline
7&1/2& 1/2&3/8& 3/8& 5/16& 5/16& 35/128& 35/128& 63/256\\\hline
\end{tabular}
\end{center}
\caption{The probabilities $q_{\varepsilon}=P[T_1>{\varepsilon}]$ for
 ${\varepsilon}=1,...,9$ and $\mu=1,...,7$}\label{Tableqn}
\end{table}

Recall (\ref{eq:full}), i.e. we have:
\begin{equation}\label{gfDerive}
E[z^{L^{*}_{\mu\varepsilon}}]=\frac{P[T_{1}>{\varepsilon}]}{1-E[z^{T_1}:T_1\leq{\varepsilon}]}=\frac{q_{\varepsilon}}{1-\sum_{k=1}^{\varepsilon}z^kp_k}.
\end{equation}
Thus, by plugging particular values from Table~\ref{Tablepn} and Table~\ref{Tableqn},  when $\mu=1, 2, 3, 4$ and $\varepsilon=1, 2,3,4,5$, in equation (\ref{gfDerive}),  the probability generating functions of the full avalanche length $L_{\mu,\varepsilon}^*$ are derived, see Table~\ref{TableFullGF}. 

\begin{table}[h]
\begin{center}
\begin{tabular}{|l||*{11}{c|}}
\hline
\backslashbox{$\mu$}{$\varepsilon$}&\makebox[3.5em]{1}&\makebox[3.5em]{2}&\makebox[3.5em]{3}
&\makebox[3.5em]{4}&\makebox[3.5em]{5}\\\hline\hline
1 &$\frac{1}{2-z}$&$\frac{1}{4-2z-z^2}$&$\frac{1}{8-4z-2z^2-z^3}$&$\frac{1}{16-8z-4z^2-2z^3-z^4}$&$\frac{1}{32-16z-8z^2-4z^3-2z^4-z^5}$\\\hline
2 &$\frac{1}{2-z}$&$\frac{1}{2-z}$&$\frac{3}{8-4z-z^3}$&$\frac{5}{16-8z-2z^3-z^4}$&$\frac{4}{16-8z-2z^3-z^4-z^5}$\\\hline
3&$\frac{1}{2-z}$&$\frac{1}{2-z}$&$\frac{3}{8-4z-z^3}$&$\frac{3}{8-4z-z^3}$&$\frac{5}{16-8z-2z^3-z^5}$\\\hline
4&$\frac{1}{2-z}$&$\frac{1}{2-z}$&$\frac{3}{8-4z-z^3}$&$\frac{3}{8-4z-z^3}$&$\frac{5}{16-8z-2z^3-z^5}$\\\hline
\end{tabular}
\end{center}
\caption{The probability generating function of the full avalanche length $L_{\mu,\varepsilon}^*$ when $\mu=1, 2, 3, 4$ and $\varepsilon=1, 2,3,4,5$}\label{TableFullGF}
\end{table}

Furthermore, we expand the probability generating function of  the full avalanche length presented in Table~\ref{TableFullGF} and calculate  the individual probabilities probabilities  see Table~\ref{TableAvamu1} (for $\mu=1$), Table~\ref{TableAvamu2}  (for $\mu=2$) and Table~\ref{TableAvamu3}  (for $\mu=3$ and $\mu=4$).
\begin{table}[h]
\begin{center}
\begin{tabular}{|l||*{11}{c|}}
\hline
\backslashbox{$\varepsilon$}{$k$}&\makebox[1.5em]{1}&\makebox[1.5em]{2}&\makebox[1.5em]{3}
&\makebox[1.5em]{4}&\makebox[1.5em]{5}&\makebox[1.5em]{6}&\makebox[1.5em]{7}&\makebox[1.5em]{8}\\\hline\hline
1 &1/4&1/8&1/16&1/32&1/64&1/128&1/256&1/512\\\hline
2& 1/8&1/8& 3/32&5/64& 1/16& 13/256&21/512&17/512\\\hline
3 &1/16& 1/16& 1/16& 7/128& 13/256& 3/64& 11/256& 81/2048\\\hline
4&1/32&1/32& 1/32& 1/32& 15/512& 29/1024& 7/256& 27/1024\\\hline
5&1/64&1/64&1/64&1/64&1/64& 31/2048& 61/4096&15/1024\\\hline
\end{tabular}
\end{center}
\caption{For $\mu=1$ and for $\varepsilon=1,2,3,4,5$, the  probabilities $P[{L^{*}_{1,\varepsilon}}=k]$ that the full avalanche length takes values
 $k=1,...,8.$}\label{TableAvamu1}
\end{table}

\begin{table}[h]
\begin{center}
\begin{tabular}{|l||*{8}{c|}}
\hline
\backslashbox{$\varepsilon$}{$k$}&\makebox[1.5em]{1}&\makebox[1.5em]{2}&\makebox[1.5em]{3}
&\makebox[1.5em]{4}&\makebox[1.5em]{5}&\makebox[1.5em]{6}&\makebox[1.5em]{7}&\makebox[1.5em]{8}\\\hline\hline
1 &1/4&1/8&1/16&1/32&1/64&1/128&1/256&1/512\\\hline
2&1/4&1/8&1/16&1/32&1/64&1/128&1/256&1/512\\\hline
3 &3/16&3/32&3/32& 9/128& 3/64& 9/256& 27/1024& 39/2048\\\hline
4&5/32& 5/64& 5/64& 5/64& 15/256& 45/1024& 75/2048&125/4096\\\hline
5&1/8&1/16&1/16& 1/16& 1/16& 13/256& 21/512& 37/1024\\\hline
\end{tabular}
\end{center}
\caption{For $\mu=2$ and for $\varepsilon=1,2,3,4,5$, the  probabilities $P[{L^{*}_{2,\varepsilon}}=k]$ that the full avalanche length takes values
 $k=1,...,8.$}\label{TableAvamu2}
\end{table}

\begin{table}[h]
\begin{center}
\begin{tabular}{|l||*{8}{c|}}
\hline
\backslashbox{$\varepsilon$}{$k$}&\makebox[1.5em]{1}&\makebox[1.5em]{2}&\makebox[1.5em]{3}
&\makebox[1.5em]{4}&\makebox[1.5em]{5}&\makebox[1.5em]{6}&\makebox[1.5em]{7}&\makebox[1.5em]{8}\\\hline\hline
1 &1/4&1/8&1/16&1/32&1/64&1/128&1/256&1/512\\\hline
2&1/4&1/8&1/16&1/32&1/64&1/128&1/256&1/512\\\hline
3 &3/16&3/32&3/32& 9/128& 3/64& 9/256& 27/1024& 39/2048\\\hline
4&3/16&3/32&3/32& 9/128& 3/64& 9/256& 27/1024& 39/2048\\\hline
5&5/32& 5/64&5/64& 15/256& 15/256& 25/512& 75/2048& 125/4096\\\hline
\end{tabular}
\end{center}
\caption{For $\mu=3,4$ and for $\varepsilon=1, 2,3,4,5$, the  probabilities $P[{L^{*}_{3,\varepsilon}}=k]$ that the full avalanche length takes values
 $k=1,..., 8.$}\label{TableAvamu3}
\end{table}

\subsection{Limit results for the full avalanche length}
\label{subsec:limit-full}
We consider the Brownian limits
\begin{equation}
\left\{  
\frac{1}{\sqrt{n}}S_{\lfloor nt\rfloor}:t\geq0\right\}
\rightarrow\left\{  W(t):t\geq0\right\}
\end{equation}
in distribution on the Skorohod space, see \cite[Theorem 14.1. of XIV,P.146]{billingsley1999convergence}. Linear interpolation on the
Wiener space works similarily  see \cite[Theorem 8.6. of VIII,P.90]{billingsley1999convergence}.

We note that time must be scaled linearily, $t\mapsto nt$, and space must be
scaled with the square-root $\mu\mapsto\sqrt{n}\mu$. Excursion length refers to
time, excursion depth to space.

Up to here we have omitted in the notation for the intratrading times the dependence on the parameter $\mu$. Now we must use $\mu$ as a variable in the limiting procedure, thus we write from now on $T_1^{(\mu)}$ instead of $T_1$, and for brevity $T_1^{(n\mu)}$ for the more precise $T_1^{(\lfloor n\mu\rfloor)}$.
\begin{proposition}\label{LStransformT1}
We have for the Laplace-Stieltjes transform
\begin{equation}
E[e^{-\frac{s}{n}T_1^{(\sqrt{n}\mu)}}]=
1-\sqrt{2s}\tanh\left(\mu\sqrt{2s}\right)\cdot n^{-\frac12}
+\mathcal{O}\left(n^{-1}\right), \quad n\to\infty.
\end{equation}
\end{proposition}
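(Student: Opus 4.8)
The plan is to start from the exact probability generating function for $T_1^{(\mu)}$ established in Proposition~\ref{prop:pgfT}, namely $E[z^{T_1^{(\mu)}}]=A_\mu(z)+\frac{B_\mu(z)}{1-B_\mu(z)}C_\mu(z)$, and to convert it into the Laplace–Stieltjes transform by the substitution $z=e^{-s/n}$. Using $B_\mu=A_\mu$ and $C_\mu=A_\mu-A_{\mu-1}$ from Lemma~\ref{GFABC}, the combined expression simplifies (as already recorded in~\eqref{Tmu}) to
\begin{equation}
E[z^{T_1^{(\mu)}}]=T_\mu(z)=\frac{A_\mu(z)-A_\mu(z)A_{\mu-1}(z)}{1-A_\mu(z)}.
\end{equation}
So the first step is purely to recall this closed form and to write down $A_\mu(z)=\frac{\lambda_1(z)^\mu-\lambda_2(z)^\mu}{\lambda_1(z)^{\mu+1}-\lambda_2(z)^{\mu+1}}$ in terms of $\lambda_1,\lambda_2$.

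Next I would carry out the asymptotic expansion of $\lambda_1,\lambda_2$ under the scaling $z=e^{-s/n}$ and $\mu\mapsto\sqrt{n}\mu$. Writing $z=e^{-s/n}$ gives $1-z^2\approx \tfrac{2s}{n}$, so $\sqrt{1-z^2}\approx\sqrt{2s}\,n^{-1/2}$ to leading order, whence $\lambda_{1,2}(z)=\frac{1\pm\sqrt{1-z^2}}{z}\approx 1\pm\sqrt{2s/n}$. Raising to the power $\sqrt{n}\mu$ and using $\bigl(1\pm\sqrt{2s/n}\bigr)^{\sqrt n\mu}\to e^{\pm\mu\sqrt{2s}}$, the ratios $\lambda_1^{\sqrt n\mu}$ and $\lambda_2^{\sqrt n\mu}$ should converge to $e^{\pm\mu\sqrt{2s}}$. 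The key observation is that the combination of hyperbolic exponentials $\frac{e^{\mu\sqrt{2s}}-e^{-\mu\sqrt{2s}}}{e^{(\mu+1)\sqrt{2s}/\cdots}-\cdots}$ collapses, in the limit, to ratios of $\sinh$ and $\cosh$; tracking the $\mu$ versus $\mu+1$ exponents carefully is what produces the $\tanh(\mu\sqrt{2s})$ factor. I would substitute these expansions into the formula for $T_\mu$, collect the $n^{-1/2}$ coefficient, and verify it equals $\sqrt{2s}\tanh(\mu\sqrt{2s})$.

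The main obstacle — and the step requiring the most care — is the bookkeeping of the expansion to the correct order. Since the leading term is $1$ and the relevant information lives in the $n^{-1/2}$ coefficient, I must expand $\lambda_{1,2}$ not merely to leading order but retain enough terms that the $O(n^{-1/2})$ contribution to $T_\mu(e^{-s/n})$ is captured exactly, while confirming that the $O(n^{-1})$ remainder is genuinely uniform in the stated sense. A delicate point is that the denominator $1-A_\mu(z)$ itself tends to a nonzero limit under the scaling (one should check $A_\mu\to\tfrac{1}{2}$ heuristically for the unscaled case, but under $\mu\mapsto\sqrt n\mu$ the behaviour is governed by the $\tanh$), so I must confirm the denominator stays bounded away from zero so that the quotient expansion is valid. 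Concretely, I would write $A_{\sqrt n\mu}(e^{-s/n})=\coth(\mu\sqrt{2s})\cdot\tfrac{1}{\cdots}$-type leading form and then Taylor-expand the full quotient. Once the leading constant and the $n^{-1/2}$ coefficient are pinned down, the result follows; I expect no conceptual difficulty beyond this careful two-term matched expansion, and the $\tanh$ emerges naturally from the difference of the $\mu$ and $\mu+1$ exponents in $A_\mu$.
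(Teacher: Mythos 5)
Your overall route is the same as the paper's: substitute $z=e^{-s/n}$ into the explicit generating function $T_\mu(z)=A_\mu(z)+\frac{B_\mu(z)}{1-B_\mu(z)}C_\mu(z)$ from Proposition~\ref{prop:pgfT}, expand $\lambda_{1,2}$ near $z=1$, raise to the power $\sqrt{n}\mu$, and collect the $n^{-1/2}$ coefficient; you also correctly flag that the expansion of $\lambda_{1,2}$ must be carried beyond leading order to survive cancellations. However, there is a concrete error at precisely the step you single out as delicate. You assert that the denominator $1-A_\mu(z)$ ``tends to a nonzero limit under the scaling'' and that you would ``confirm the denominator stays bounded away from zero.'' This is false: writing $\lambda_{1,2}=e^{\pm t}$ with $t\sim\sqrt{2s/n}$, one has $A_M=\sinh(Mt)/\sinh((M+1)t)$, so under $M=\sqrt{n}\mu$, $z=e^{-s/n}$,
\begin{equation*}
A_{\sqrt{n}\mu}(e^{-s/n})=\frac{\sinh\left(\mu\sqrt{2s}\right)}{\sinh\left(\mu\sqrt{2s}+\sqrt{2s/n}\right)}+\cdots=1-\sqrt{2s}\coth\left(\mu\sqrt{2s}\right)n^{-1/2}+\mathcal{O}(n^{-1}),
\end{equation*}
which is exactly (\ref{Hyp2}); the denominator $1-A_{\sqrt{n}\mu}(e^{-s/n})$ therefore vanishes at rate $n^{-1/2}$. (Your heuristic $A_\mu\to\frac12$ is the unscaled value $A_\mu(1)=\mu/(\mu+1)$ at $\mu=1$; after the substitution $\mu\mapsto\sqrt{n}\mu$ this tends to $1$.)

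This vanishing is not a technical nuisance to be ruled out but the crux of the computation. One finds $C_{\sqrt{n}\mu}(e^{-s/n})=\mathcal{O}(n^{-1})$ (e.g.\ via $\sinh(a-h)\sinh(a+h)=\sinh^2a-\sinh^2h$), so the Type~II term $\frac{B}{1-B}\,C$ is a ratio of an $\mathcal{O}(n^{-1})$ numerator and an $\mathcal{O}(n^{-1/2})$ denominator, hence contributes $\sqrt{2s}\left(\coth(\mu\sqrt{2s})-\tanh(\mu\sqrt{2s})\right)n^{-1/2}$. Only after adding this to the $-\sqrt{2s}\coth(\mu\sqrt{2s})\,n^{-1/2}$ coming from $A_{\sqrt{n}\mu}$ does the claimed $-\sqrt{2s}\tanh(\mu\sqrt{2s})\,n^{-1/2}$ appear; the $\tanh$ does not come from ``the difference of the $\mu$ and $\mu+1$ exponents in $A_\mu$'' alone, which by itself produces $\coth$. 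As written, your plan would either stall at the (false) boundedness claim or lead you to discard the Type~II contribution as negligible because $C\to0$; you need the matched orders $1-A\sim n^{-1/2}$ and $C\sim n^{-1}$ to obtain the correct first-order coefficient.
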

\begin{proof}
Slightly lengthy, but still elementary asymptotic expansions:\footnote{The detailed calculations can be performed by hand in a few pages but can be also performed (and checked) automatically with a computer algebra system.} using the expansion of $\sqrt{1-z^2}$ for $z \to 1$ obtain the expansion of $\lambda_1(z)$ namely:
$$\lambda_1(z)=1+\sqrt{2}(1-z)^{1/2}+(1-z)+\frac{3}{2\sqrt{2}}(1-z)^{3/2}+\mathcal{O}(1-z)^2.$$
Inserting this into the power series for $\log{(1+x)}$ at $x=0$ yields
$$\log\lambda_1(z)=\sqrt{2}(1-z)^{1/2}+\frac{5}{6\sqrt{2}}(1-z)^{3/2}+\mathcal{O}(1-z)^2,$$
and a similar calculation yields
$$\log\lambda_2(z)=-\sqrt{2}(1-z)^{1/2}-\frac{5}{6\sqrt{2}}(1-z)^{3/2}+\mathcal{O}(1-z)^2.$$
Now we expand $e^{-s/n}$ for $n\to \infty$ using the exponential series, and the elementary formula
$$\lambda_1(e^{-s/n})^{\mu\sqrt{n}}=\exp(\mu \sqrt{n}\log\lambda_1(e^{-s/n}))$$
and combine the asymptotic expansions, similarily for $\lambda_2$. If we consider enough terms to account for cancellations in the differences in (\ref{gfa}) and (\ref{gfc}) we obtain expansion for $A_{{\mu}\sqrt{n}}(e^{-\frac{s}{n}})$ which coincides with $B_{{\mu}\sqrt{n}}(e^{-\frac{s}{n}})$, and we find also the expansion of $C_{{\mu}\sqrt{n}}+(e^{-\frac{s}{n}})$ as $n \to \infty.$ Finally we get a qutient of exponentials, which can be expressed as hyperbolic tangent.
\end{proof}

The following limit results involve the hyperbolic tangent $\tanh$, the
hyperbolic cosecant $\csch$, the hyperbolic secant $\sech$ and the hyperbolic
cotangent $\coth$.

\begin{proposition}[Hyperbolic function table]
We have pointwise for $n\rightarrow\infty$
the asymptotic relations
\begin{eqnarray}
&&  E[e^{-\frac{s}{n}T_1^{\mu\sqrt{n}}}]=
1-\sqrt{2s}\tanh(\mu\sqrt{2s})\cdot n^{-\frac{1}{2}}
+\mathcal{O}(n^{-1}),\label{Hyp1}
\\
&&  E[e^{-\frac{s}{n}T_1^{\mu\sqrt{n}}};S(T_1^{\mu\sqrt{n}})>0]
=1-\sqrt{2s}\coth(\mu\sqrt{2s})\cdot n^{-\frac{1}{2}}
+\mathcal{O}(n^{-1}),\label{Hyp2}
\\
&&  E[e^{-\frac{s}{n}T_1^{\mu\sqrt{n}}};S(T_1^{\mu\sqrt{n}})\leq0]
=2\sqrt{2s}\csch(2\mu\sqrt{2s})
+\mathcal{O}(n^{-1/2}),\label{Hyp3}
\\
&&  E[e^{-\frac{s}{n}\tau_D({\mu\sqrt{n}})}]
=\sech(2\mu\sqrt{2s})^{2}
+\mathcal{O}(n^{-1/2}).\label{Hyp4}
\end{eqnarray}
\end{proposition}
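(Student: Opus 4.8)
The plan is to obtain all four transforms from generating functions already in hand and then run them through the asymptotic scheme of Proposition~\ref{LStransformT1}. From \eqref{firstType1} we have $E[z^{T_1};S(T_1)>0]=A_\mu(z)$ and $E[z^{T_1};S(T_1)\leq0]=\frac{B_\mu(z)}{1-B_\mu(z)}C_\mu(z)$, and Proposition~\ref{prop:pgfT} gives their sum $E[z^{T_1}]$. For the first Type~II trade I would record that the path up to $\tau_D$ is a (possibly empty) run of Type~I excursions followed by one Type~II excursion, i.e. $\SEQ(\mathscr{A}_\mu)\times\SEQ_{\geq1}(\mathscr{B}_\mu)\times\mathscr{C}_\mu$; by \cite[Theorem~I.1, P.27]{FS} this gives
\[
E[z^{\tau_D}]=\frac{1}{1-A_\mu(z)}\,E[z^{T_1};S(T_1)\leq0]=\frac{A_\mu(z)\,C_\mu(z)}{(1-A_\mu(z))^2}.
\]
Each relation \eqref{Hyp1}--\eqref{Hyp4} then follows by setting $z=e^{-s/n}$, $\mu\mapsto\lfloor\mu\sqrt n\rfloor$, and expanding as $n\to\infty$.

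The algebraic step that makes the hyperbolic functions appear is to exploit $\lambda_1(z)\lambda_2(z)=1$. Writing $\phi=\log\lambda_1(z)$ and $\alpha=m\phi$ with $m=\lfloor\mu\sqrt n\rfloor$, one has $\lambda_1^{m}-\lambda_2^{m}=2\sinh\alpha$, hence the exact identity
\[
A_m(z)=\frac{\sinh(m\phi)}{\sinh((m+1)\phi)}=\frac{\sinh\alpha}{\sinh(\alpha+\phi)}.
\]
The difference $C_m=A_m-A_{m-1}$ then collapses through $\sinh(\alpha+\phi)\sinh(\alpha-\phi)=\sinh^2\alpha-\sinh^2\phi$ to $C_m(z)=\sinh^2\phi/\big(\sinh\alpha\,\sinh(\alpha+\phi)\big)$, so that every quantity is a rational expression in $\sinh\alpha$, $\sinh(\alpha\pm\phi)$ and $\sinh\phi$ only.

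The asymptotics are then mechanical. The expansions of $\log\lambda_1,\log\lambda_2$ from Proposition~\ref{LStransformT1} together with $1-e^{-s/n}=s/n+\mathcal O(n^{-2})$ give $\phi=\sqrt{2s}\,n^{-1/2}+\mathcal O(n^{-3/2})$ and $\alpha=\mu\sqrt{2s}+\mathcal O(n^{-1})$. From $\sinh(\alpha+\phi)-\sinh\alpha=\cosh\alpha\,\phi+\mathcal O(\phi^2)$ one gets $A_{m}=1-\sqrt{2s}\coth(\mu\sqrt{2s})\,n^{-1/2}+\mathcal O(n^{-1})$, which is \eqref{Hyp2}; and $\sinh^2\phi=2s\,n^{-1}+\mathcal O(n^{-2})$ gives $C_{m}=2s\,n^{-1}/\sinh^2(\mu\sqrt{2s})+\mathcal O(n^{-3/2})$. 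Assembling $E[z^{T_1};S(T_1)\leq0]=A_mC_m/(1-A_m)$ and using $\sinh2x=2\sinh x\cosh x$ turns $1/(\sinh^2 x\coth x)$ into $2\csch(2x)$, giving the coefficient $2\sqrt{2s}\csch(2\mu\sqrt{2s})$ of $n^{-1/2}$, i.e. \eqref{Hyp3}. Dividing once more by $1-A_m$ and using $2\csch(2x)/\coth x=\sech^2 x$ yields the hyperbolic secant squared of \eqref{Hyp4}. Finally \eqref{Hyp1} is either Proposition~\ref{LStransformT1} verbatim or the sum of the previous two, a convenient internal check since $\coth x-2\csch 2x=\tanh x$.

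The difficulty is entirely in the order bookkeeping, not in any isolated estimate. The four transforms sit at different scales: \eqref{Hyp1} and \eqref{Hyp2} are $1+\mathcal O(n^{-1/2})$, \eqref{Hyp3} is genuinely $\mathcal O(n^{-1/2})$ (consistent with $P[S(T_1)\leq0]=1/(\mu+1)\to0$, so it is cleanest to display the factor $n^{-1/2}$ explicitly), while \eqref{Hyp4} emerges as a quotient of two quantities each of order $n^{-1/2}$, so both leading coefficients must be computed exactly. The delicate point is the cancellation in $C_\mu=A_\mu-A_{\mu-1}$: the $\sinh^2\alpha$ terms cancel identically and the surviving contribution is the second-order term $\sinh^2\phi\sim\phi^2$, so one must carry the expansion of $\phi$ one order beyond what \eqref{Hyp2} requires. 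One should also check that the floor $\lfloor\mu\sqrt n\rfloor$ only shifts $\alpha$ by $\mathcal O(\phi)=\mathcal O(n^{-1/2})$ and is absorbed into the error terms, and verify with care the exact hyperbolic argument in \eqref{Hyp4}, since the double-angle identities are precisely where a spurious factor of two can slip in.
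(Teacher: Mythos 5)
Your proof is correct and follows the same route as the paper's own (the paper's proof consists essentially of the remark that everything follows from ``elementary asymptotic calculations'' as in Proposition~\ref{LStransformT1}); what you add is the exact hyperbolic reformulation via $\lambda_1(z)\lambda_2(z)=1$, namely $A_m=\sinh(m\phi)/\sinh((m+1)\phi)$ and $C_m=\sinh^2\phi/\bigl(\sinh(m\phi)\sinh((m+1)\phi)\bigr)$, which makes all four expansions transparent and even yields closed forms, e.g. $A_mC_m/(1-A_m)^2=\cosh^2(\phi/2)/\cosh^2((m+\tfrac12)\phi)$ exactly. Your decomposition $\SEQ(\mathscr{A}_\mu)\times\SEQ_{\geq1}(\mathscr{B}_\mu)\times\mathscr{C}_\mu$ for the path up to $\tau_D$ is the right one (note that the formula for $E[z^D]$ displayed in the preceding lemma of the paper cannot be correct as written, since it does not equal $1$ at $z=1$); for $\mu=1$ your $E[z^{\tau_D}]=z^2/(2-z)^2$ reproduces $P[\tau_D=2]=P[\tau_D=3]=1/4$, $P[\tau_D=4]=3/16$, which one can confirm by direct enumeration.

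You should, however, be more assertive about the two places where your computation disagrees with the statement, because in both your computation is the correct one. First, the leading term of (\ref{Hyp3}) must carry the factor $n^{-1/2}$: the left-hand side is bounded by $P[S(T_1^{\mu\sqrt n})\leq0]=1/(\lfloor\mu\sqrt n\rfloor+1)\to0$, and only with that factor does (\ref{Hyp2})$\,+\,$(\ref{Hyp3})$\,=\,$(\ref{Hyp1}) hold, via $\coth x-2\csch(2x)=\tanh x$. Second, the argument in (\ref{Hyp4}) should be $\mu\sqrt{2s}$, not $2\mu\sqrt{2s}$: from the exact formula above, $E[e^{-\frac{s}{n}\tau_D}]=(1+\cosh\phi)/(1+\cosh((2m+1)\phi))\to 2/(1+\cosh(2\mu\sqrt{2s}))=\sech^2(\mu\sqrt{2s})$, which is consistent with $E[\tau_D^{(m)}]=2m(m+1)\sim2m^2$ read off from the generating function, whereas $\sech^2(2\mu\sqrt{2s})$ would force $E[\tau_D^{(m)}]\sim8m^2$. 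So do not merely ``verify with care'': the stated (\ref{Hyp4}) carries a spurious factor of $2$ in its argument, and your derivation corrects it.
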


\begin{proof}
Elementary asymptotic calculations are performed, similarly as in the proof of Proposition \ref{LStransformT1}. 
Clearly the entries (\ref{Hyp2}) and (\ref{Hyp3}), add up to (\ref{Hyp1}) by the law of total probability. This corresponds in the limit to the elementary identity
\begin{equation}
\tanh\left(  \frac{z}{2}\right)  =\coth(z)-\csch(z),
\end{equation}
see \cite[4.5.30, P.84]{AS}. 
\end{proof}

\begin{lemma}\label{lemma11}
Let \begin{equation}
h(x)=\frac{1}{\sqrt{2\pi}}\cdot x^{-3/2}+2\sum_{k\geq1}\left[  \frac{1}%
{\sqrt{2\pi}}\cdot x^{-3/2}-2\sqrt{\frac{2}{\pi}}k^{2}\mu^{2}\cdot
x^{-5/2}\right]  \exp\left(  -\frac{2k^{2}\mu^{2}}{x}\right)  ,
\end{equation}
then we have \begin{equation}
\int_{0}^{\infty}(1-e^{-\lambda x})h(x)dx=\sqrt{2\lambda}\tanh\left(  \mu
\sqrt{2\lambda}\right)  .
\end{equation}
\end{lemma}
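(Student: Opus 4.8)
The plan is to evaluate $\int_0^\infty(1-e^{-\lambda x})h(x)\,dx$ termwise, exploiting that every summand in $h$ is an exact derivative. First I would set $a_k=2k^2\mu^2$ (so $a_0=0$) and observe that the leading term $\frac{1}{\sqrt{2\pi}}x^{-3/2}$ is exactly the $k=0$ instance of the bracketed expression; this lets me write $h$ as a single series $\sum_{k\ge0}c_k\,b_k(x)$ with weights $c_0=1$ and $c_k=2$ for $k\ge1$, where
\begin{equation}
b_k(x)=\frac{1}{\sqrt{2\pi}}\bigl(x^{-3/2}-2a_k x^{-5/2}\bigr)e^{-a_k/x}.
\end{equation}
The elementary identity that makes the whole computation collapse is
\begin{equation}
b_k(x)=-\frac{2}{\sqrt{2\pi}}\,\frac{d}{dx}\bigl(x^{-1/2}e^{-a_k/x}\bigr),
\end{equation}
verified by a single differentiation.

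Next I would integrate each term by parts against $1-e^{-\lambda x}$. The boundary contributions vanish: at $x\to0^+$ the factor $e^{-a_k/x}$ kills everything for $k\ge1$, while $1-e^{-\lambda x}\sim\lambda x$ handles $k=0$, and at $x\to\infty$ the antiderivative $x^{-1/2}e^{-a_k/x}\to0$. What remains is
\begin{equation}
\int_0^\infty(1-e^{-\lambda x})b_k(x)\,dx=\frac{2\lambda}{\sqrt{2\pi}}\int_0^\infty x^{-1/2}e^{-\lambda x-a_k/x}\,dx.
\end{equation}
The inner integral is the classical one (a $K_{1/2}$ Bessel integral, or obtained directly by the Cauchy--Schl\"omilch substitution $u=\sqrt{\lambda x}-\sqrt{a_k/x}$),
\begin{equation}
\int_0^\infty x^{-1/2}e^{-\lambda x-a_k/x}\,dx=\sqrt{\frac{\pi}{\lambda}}\,e^{-2\sqrt{\lambda a_k}},
\end{equation}
and since $\sqrt{\lambda a_k}=k\mu\sqrt{2\lambda}$ each term evaluates to the clean value $\sqrt{2\lambda}\,e^{-2k\mu\sqrt{2\lambda}}$. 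For $k=0$ this recovers $\sqrt{2\lambda}$, consistently with the identity for $g$ quoted just before the statement.

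Finally I would reassemble the series. Writing $u=\mu\sqrt{2\lambda}$, the weighted sum of the term values is $\sqrt{2\lambda}$ times a series in $e^{-2u}$, which I would match against the expansion $\tanh u=1+2\sum_{k\ge1}(-1)^k e^{-2ku}$ to obtain $\sqrt{2\lambda}\tanh(\mu\sqrt{2\lambda})$. The step I expect to need the most care is the bookkeeping of the signs and weights in this series: precisely this pattern is what distinguishes the answer $\tanh$ from the superficially similar $\coth=\frac{1+e^{-2u}}{1-e^{-2u}}$, reflecting the \emph{alternating} image charges of a Dirichlet (rather than periodic) heat kernel on the interval of width $2\mu$. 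Alongside this I would justify the termwise integration by dominated convergence, using that for large $k$ the factor $e^{-a_k/x}$ gives the tail of the series a Gaussian-type decay and makes it uniformly integrable against $(1-e^{-\lambda x})$ on $(0,\infty)$; this is where the only genuine analytic (as opposed to bookkeeping) work lies, but it is routine.
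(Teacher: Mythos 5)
Your per-term computation is correct, and it is the right way to flesh out the paper's one-line proof (``justify termwise integration and sum the resulting series''): the identity $b_k(x)=-\tfrac{2}{\sqrt{2\pi}}\tfrac{d}{dx}\bigl(x^{-1/2}e^{-a_k/x}\bigr)$ holds, the boundary terms do vanish, the Cauchy--Schl\"omilch integral is right, and indeed $\int_0^\infty(1-e^{-\lambda x})b_k(x)\,dx=\sqrt{2\lambda}\,e^{-2k\mu\sqrt{2\lambda}}$. The genuine gap sits exactly at the step you yourself flag as delicate, and it is not mere bookkeeping: the weights supplied by the stated $h$ are $c_0=1$ and $c_k=+2$ for every $k\ge1$, so your termwise sum is $\sqrt{2\lambda}\bigl(1+2\sum_{k\ge1}e^{-2ku}\bigr)=\sqrt{2\lambda}\coth(u)$ with $u=\mu\sqrt{2\lambda}$, whereas the expansion you correctly quote, $\tanh u=1+2\sum_{k\ge1}(-1)^k e^{-2ku}$, requires alternating signs that the displayed $h$ does not contain. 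Your own remark about the ``alternating image charges of a Dirichlet heat kernel'' identifies precisely what is missing: the kernel whose integral is $\sqrt{2\lambda}\tanh(\mu\sqrt{2\lambda})$ is $b_0+2\sum_{k\ge1}(-1)^k b_k$, i.e.\ the lemma's formula for $h$ needs a factor $(-1)^k$ inside the sum. With that factor your argument closes (justify the interchange by pairing consecutive terms or by Abel summation); without it the claimed identity is false as written, so the ``matching'' step cannot be carried out.

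A second, related problem is that for the non-alternating series the dominated-convergence justification you propose cannot work. The positive part of $b_k$ lives on $x>2a_k$, where $\int_{2a_k}^\infty b_k(x)\,dx=\tfrac{2}{\sqrt{2\pi}}(2a_k)^{-1/2}e^{-1/2}\asymp k^{-1}$, so $\sum_k\int_0^\infty(1-e^{-\lambda x})|b_k(x)|\,dx=\infty$ and Fubini--Tonelli fails. This is not a removable technicality: Poisson summation turns the stated $h$ into $\frac{\pi^2}{\mu^3}\sum_{k\ge1}k^2e^{-\pi^2k^2x/(2\mu^2)}$, and integrating that (now absolutely convergent) series gives
\begin{equation}
\int_0^\infty(1-e^{-\lambda x})h(x)\,dx=\sqrt{2\lambda}\coth\bigl(\mu\sqrt{2\lambda}\bigr)-\frac{1}{\mu},
\end{equation}
which differs from the naive termwise value $\sqrt{2\lambda}\coth(\mu\sqrt{2\lambda})$ by the constant $1/\mu$ --- the interchange genuinely changes the answer. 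So you should (i) insert the $(-1)^k$ into $h$, after which both the signs and the convergence issues resolve and your computation yields $\sqrt{2\lambda}\tanh(\mu\sqrt{2\lambda})$, and (ii) replace the appeal to dominated convergence by an argument adapted to a conditionally convergent alternating series.
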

\begin{proof}
We justify termwise integration and sum the resulting series.
\end{proof}

\begin{theorem}The Laplace transform of the scaled full avalanche length for the simple symmetric random walk satisfies:
\begin{equation} \label{eqT3}
\lim_{n\to\infty} E[e^{-\frac{\lambda}{n}L_{\mu \sqrt{n},\varepsilon n}^*}]= \frac{\int_{\varepsilon}^{\infty}h(x)dx}{\int_{0}^{\varepsilon
}(1-e^{-\lambda x})h(x)dx+\int_{\varepsilon}^{\infty}h(x)dx}.
\end{equation}
\end{theorem}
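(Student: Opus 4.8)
The plan is to repeat, in structure, the argument that produced the simplified Brownian avalanche length, but with the first passage time $R$ replaced by the full intratrading time $T_1^{(\mu\sqrt n)}$ and the excursion density $g$ replaced by the density $h$ of Lemma~\ref{lemma11}. First I would take the exact generating function of Theorem~\ref{thm:full}, equation~(\ref{eq:full}), put $z=e^{-\lambda/n}$ and insert the diffusive scaling $\mu\mapsto\mu\sqrt n$, $\varepsilon\mapsto\lfloor\varepsilon n\rfloor$, obtaining
\begin{equation}
E[e^{-\frac\lambda n L^{*}_{\mu\sqrt n,\lfloor\varepsilon n\rfloor}}]
=\frac{P[T_1^{(\mu\sqrt n)}>\lfloor\varepsilon n\rfloor]}
{E[1-e^{-\frac\lambda n T_1^{(\mu\sqrt n)}};T_1^{(\mu\sqrt n)}\leq\lfloor\varepsilon n\rfloor]+P[T_1^{(\mu\sqrt n)}>\lfloor\varepsilon n\rfloor]}.
\end{equation}
Multiplying numerator and denominator by $\sqrt n$, the theorem reduces to the two limit statements
\begin{align}
\sqrt n\,P[T_1^{(\mu\sqrt n)}>\lfloor\varepsilon n\rfloor]&\longrightarrow\int_\varepsilon^\infty h(x)\,dx,\\
\sqrt n\,E\bigl[(1-e^{-\frac\lambda n T_1^{(\mu\sqrt n)}})\,;\,T_1^{(\mu\sqrt n)}\leq\lfloor\varepsilon n\rfloor\bigr]&\longrightarrow\int_0^\varepsilon(1-e^{-\lambda x})h(x)\,dx.
\end{align}

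The tool for both is the vague convergence $\sqrt n\,\nu_n\to H$ on $(0,\infty)$, where $\nu_n=\Law(\tfrac1n T_1^{(\mu\sqrt n)})$ and $H$ is the measure with density $h$. This is the exact analogue of the proposition proved for $\Law(\tfrac1n R)$ in the simplified case, and I would establish it the same way: Proposition~\ref{LStransformT1} gives $\sqrt n\,(1-E[e^{-\frac sn T_1^{(\mu\sqrt n)}}])\to\sqrt{2s}\tanh(\mu\sqrt{2s})$, Lemma~\ref{lemma11} identifies this limit with $\int_0^\infty(1-e^{-sx})h(x)\,dx$, and Lemma~\ref{lem:vaglim} together with Proposition~\ref{prop:vaglim} then convert the convergence of these Laplace functionals into the vague limit. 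The factor $1-e^{-sx}$, which vanishes linearly at the origin, is precisely what absorbs the non-integrable singularity $h(x)\sim\frac1{\sqrt{2\pi}}x^{-3/2}$ at $x=0$ and thereby controls the accumulation of arbitrarily short excursions near time zero.

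It then remains to pass from vague convergence to convergence of the truncated integrals. The second limit is the easier one: the integrand $(1-e^{-\lambda x})\mathbb{I}_{(0,\varepsilon]}(x)$ is $O(x)$ near $0$, hence $h$-integrable, is continuous except at $\varepsilon$ where $H$ carries no atom, and can be squeezed between functions of $C_c((0,\infty))$, so the vague limit applies. The first limit needs the one new ingredient, namely uniform control of the mass near infinity, and here the saturation of $\tanh$ helps: the limiting Laplace exponent satisfies $\frac{d}{ds}\bigl(\sqrt{2s}\tanh(\mu\sqrt{2s})\bigr)\big|_{s=0}=2\mu<\infty$, so that $\tfrac1{\sqrt n}E[T_1^{(\mu\sqrt n)}]\to 2\mu$ and the Markov inequality yields $\sqrt n\,\nu_n((M,\infty))\leq 2\mu/M+o(1)$ uniformly in $n$. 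This forbids any escape of mass to $+\infty$, so that combining the vague limit on $(\varepsilon,M]$ with the uniform tail bound gives $\sqrt n\,\nu_n((\varepsilon,\infty))\to H((\varepsilon,\infty))=\int_\varepsilon^\infty h(x)\,dx$. Dividing the two limits produces~(\ref{eqT3}).

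I expect the genuine obstacle to be exactly this upgrade from vague to truncated convergence, because it must simultaneously tame the singularity at $0$ (through the factor $1-e^{-\lambda x}$ and the finiteness of $\int_0^1 x\,h(x)\,dx$) and the tail at $\infty$ (through the finite first moment $\int_0^\infty x\,h(x)\,dx=2\mu$). Both features are special to the full avalanche: in the simplified case $R$ has infinite mean and the tail had to be pinned down by the explicit first-passage asymptotics $P[R>\varepsilon n]\sim\sqrt{2/\pi}\,(\varepsilon n)^{-1/2}$, whereas here the finiteness of the mean of $H$, inherited from the boundedness of $\tanh$, makes a soft uniform-integrability argument available instead. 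A subsidiary technical point is justifying $\tfrac1{\sqrt n}E[T_1^{(\mu\sqrt n)}]\to2\mu$, which follows once one checks that the expansion of Proposition~\ref{LStransformT1} may be differentiated in $s$ at $s=0$; for fixed $\mu$ the walk is confined to a band of width less than $\mu$ until the first trade, so $T_1^{(\mu)}$ has exponential tails and finite mean, legitimising the manipulation.
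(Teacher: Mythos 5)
Your proposal is correct in outline and assembles exactly the ingredients the paper combines: the exact generating function of Theorem~\ref{thm:full} under the scaling $z=e^{-\lambda/n}$, $\mu\mapsto\mu\sqrt n$, $\varepsilon\mapsto\lfloor\varepsilon n\rfloor$, the expansion of Proposition~\ref{LStransformT1}, the identification of $\sqrt{2s}\tanh(\mu\sqrt{2s})$ with $\int_0^\infty(1-e^{-sx})h(x)\,dx$ in Lemma~\ref{lemma11}, and Lemma~\ref{lem:vaglim}. The one place you genuinely diverge is the step you call the ``genuine obstacle'', the upgrade from vague convergence to convergence of the truncated integrals. In the paper this is not a separate step: the conclusions (\ref{lapmu}) and (\ref{resla}) of Lemma~\ref{lem:vaglim} are literally the two limits you reduce the theorem to, and the lemma obtains them by renormalising $\mu_n$ with the weight $1-e^{-\lambda x}$ into probability measures, applying the continuity theorem for Laplace transforms, and then the continuous mapping theorem with the bounded function $\hat\nu(\lambda)(1-e^{-\lambda x})^{-1}I_{x>\varepsilon}$; this single normalisation tames both the $x^{-3/2}$ singularity at the origin and the tail at infinity, so no separate uniform-integrability argument is required. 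Your alternative tail control via Markov's inequality and $\tfrac1{\sqrt n}E[T_1^{(\mu\sqrt n)}]\to2\mu$ also works, but it costs an extra moment asymptotic, and the justification you sketch for it is false: the walk is \emph{not} confined to a band of width $\mu$ before the first trade, since by Proposition~\ref{prop:decomp} a Type~II excursion concatenates arbitrarily many $\mathscr{B}_\mu$ blocks and can reach arbitrarily negative levels. The finiteness (indeed exponential decay) of the tails of $T_1^{(\mu)}$ instead follows from the generating function (\ref{gfT1}) having radius of convergence strictly greater than $1$ (the singularities of $A_\mu$ lie at $1/\cos(\pi/(\mu+1))>1$ and $B_\mu(1)=\mu/(\mu+1)<1$); or one simply sidesteps the whole issue by invoking Lemma~\ref{lem:vaglim} as stated.
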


\begin{proof}
We combine the limit results for the first trade in Proposition \ref{LStransformT1}, Lemma \ref{lemma11} and Lemma \ref{lem:vaglim}.
\end{proof}

\begin{remark}
Integrating the series for $h$ term by term, we find series representations for the integrals in the numerator and denominator in (\ref{eqT3}). The terms for $\int_{0} ^{\varepsilon}(1-e^{-\lambda x})h(x)dx$ can be expressed in terms of the error and complementary error functions. Termwise integration of $\int_{\varepsilon}^{\infty}h(x)dx$ yields thet there exists the series representation which can be recognize as a series representation of a  Jacobi theta function. 
We have 
\begin{equation}
\lim_{n\rightarrow\infty}P\left[  \frac{1}{n}T_{1}^{(\mu\sqrt{n})}%
>\varepsilon\right]  =\vartheta_{4}(0,e).
\end{equation}
\end{remark}

\newpage
\section{An initially empty order book}\label{SectionInitiallyEmpty}
Let us assume that we start with an order book that is
initially empty, i.e.
\begin{equation}
V(0,u)=0, \quad u\in\mathbb{Z}. 
\end{equation}

Furthermore, as in the case when the initially full limit order book is considered, the dynamics involves with respect to the equation (\ref{dynamics}). In order to be precise, specifically trading times $\{\widetilde{\tau}_{i}:i\geq0\}$ and intertrading times $(\widetilde{{T}}_{i})_{i\geq1}$ corresponding to the model when the limit order book is initially empty are  defined by

\begin{equation}
{\widetilde{\tau}}_{0}=0,\quad{\widetilde{\tau}}_{i}=\inf\{n>\widetilde{\tau}_{i-1}:V(n,S_{n})>0\},\quad  \widetilde{T}_{i}
={\widetilde{\tau}}_{i}-{\widetilde{\tau}}_{i-1},\quad i\geq1.
\end{equation}

The first trade (either Type~I or Type~II trade) will occur at smallest time $\widetilde{T}_1$ for which it is satisfied: 
$$|\min{(S(0),S(1),...,S(\widetilde{T}_1-1))}-S(\widetilde{T}_1)|=\mu.$$

\begin{remark}
Due to the fact that the limit order book model dynamics follow (\ref{dynamics}), note that if we start with an initially empty order book after occurring the first trade (either Type~I or Type~II trade) the analysis of the avalanche length is equivalent as we have started with initially full order book.
\end{remark}

\begin{figure}[h]
\begin{center}
\includegraphics[scale=1]{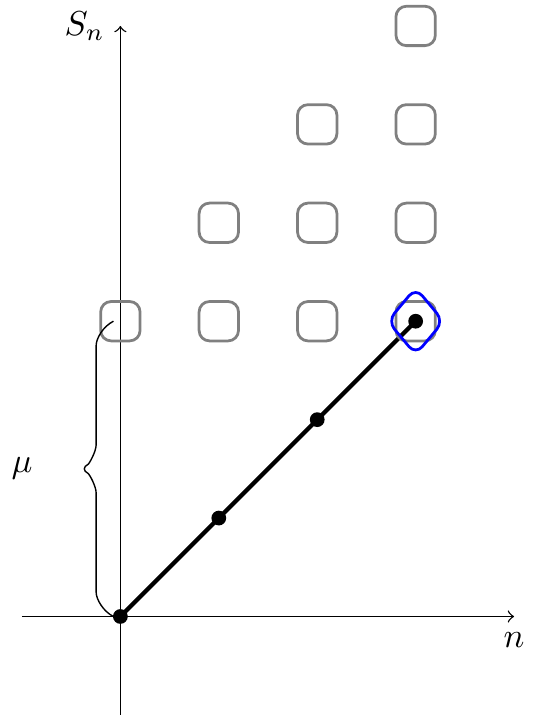}
\caption{Example of the path that depicts the trivial case for the first Type~I trade. }
\label{figIntTI}
\end{center}
\end{figure}

\subsection{The first Type~I trade in an initially empty book}
If the first Type~I trade in an initially empty book occurs at the price level $k$, for $k \in \{{1,2,...,\mu}\}$, then  the path $(S(0),S(1),...,S(\widetilde{T_1}))$ goes down by $\mu-k$ steps, but not below, and then goes up to the level $k$, as illustrated in Figure \ref{figIntTI2}. 
The trivial case when $k=\mu$ is depicted at Figure \ref{figIntTI}, the new order is placed at the start at distance $S_0+\mu=\mu$ and the first trade is occured at level $\mu$, after exactly $\mu$ steps up.

For $n\geq 1$, $\mu\ge 1$ and $k \in \{{1,2,...,\mu}\}$ we introduce the paths:

\begin{equation}\label{Gpath}
\mathscr{G}_{n,\mu,k}=\left\{s\in U_{n}:\mbox{$s_0=0$,
$k-\mu<s_j < \mu$ for $1\leq j\leq n-1$ and $s_n=k-\mu$}\right\},
\end{equation}

\begin{equation}\label{Fpath}
\mathscr{F}_{n, \mu,k}=\left\{s\in U_{n}:\mbox{$s_0=k-\mu$,
$k-\mu-1<s_j < k$ for $1\leq j\leq n-1$ and $s_n=k$}\right\}.
\end{equation}

\begin{figure}[h]
\begin{center}
\includegraphics[scale=1]{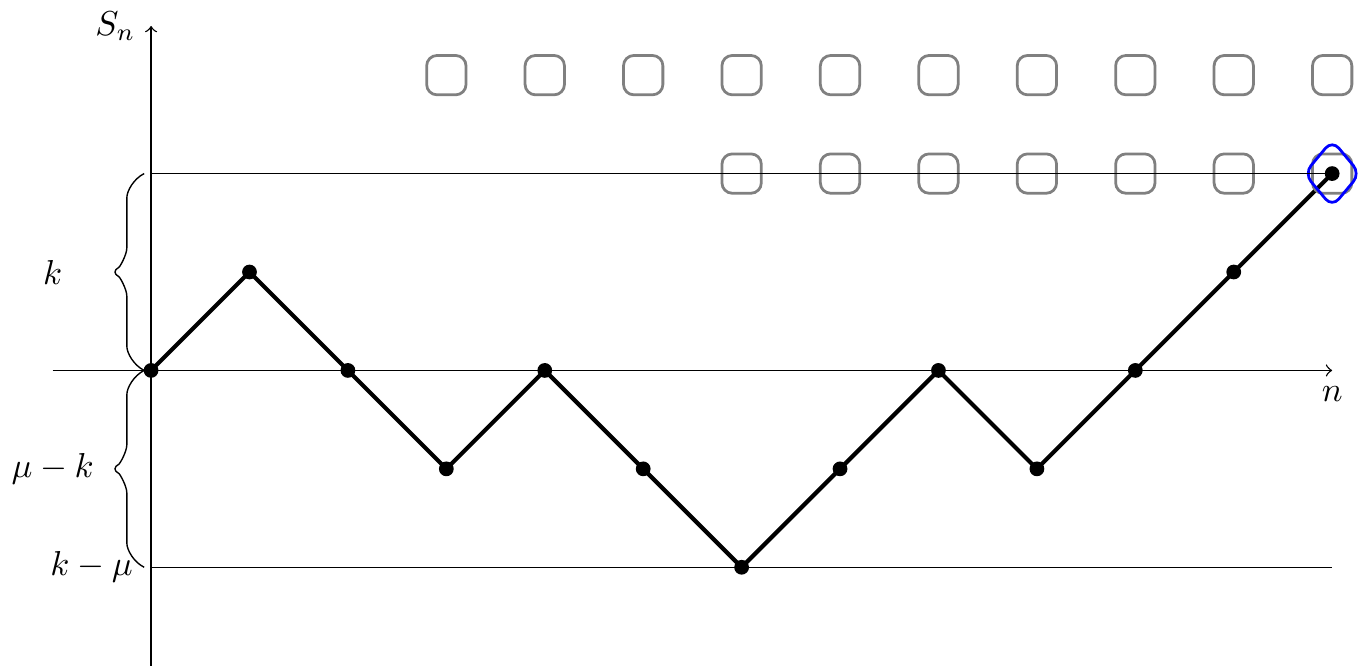}
\caption{The path of the first Type~I trade in the initially empty order book can be seen as a concatenation of two paths: first one which is starting at 0, forbidden level is $\mu$ and absorbing at level $k-\mu$; second path which is starting at $k-\mu$,  forbidden level is  $k-\mu-1$ and absorbing at level $k$, where $k \in \{{1,2,...,\mu}\}$}.
\label{figIntTI2}
\end{center}
\end{figure}

Furthermore, define the corresponding sets with arbitrary, finite path length by:
\begin{equation}
\mathscr{G}_{\mu,k}=\bigcup_{n\geq1}\mathscr{G}_{n,\mu,k},\quad\mathscr{F}_{\mu,k}=\bigcup_{n\geq1}\mathscr{F}_{n,\mu,k}
\end{equation}

\begin{lemma}\label{FirstTIempty}
For $k \in \{{1,2,...,\mu}\}$ denote by $G_{\mu,k}, F_{\mu,k}$ 
the probability generating functions for the combinatorial classes $\mathscr{G}_{\mu,k}, 
\mathscr{F}_{\mu,k}$ respectievly. We have
\begin{eqnarray}
G_{\mu,k}(s)&=&\frac{\lambda_{1}(s)^{\mu}-\lambda_{2}(s)^{\mu}}{\lambda
_{1}(s)^{2\mu-k}-\lambda_{2}(s)^{2\mu-k}},\label{gfg}\\
F_{\mu,k}(s)&=&\frac{\lambda_{1}(s)-\lambda_{2}(s)}{\lambda
_{1}(s)^{\mu+1}-\lambda_{2}(s)^{\mu+1}},\label{gff}
\end{eqnarray}
\end{lemma}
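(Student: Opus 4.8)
The computation of $G_{\mu,k}$ and $F_{\mu,k}$ should follow the same pattern as the proof of Lemma~\ref{GFABC}: reduce each combinatorial class to a gambler's-ruin absorption problem and quote the generating function from \cite[XIV.4, (4.12), P.351]{Fel1}. The key observation is that both $\mathscr{G}_{\mu,k}$ and $\mathscr{F}_{\mu,k}$ consist of paths confined strictly between two barriers until they are absorbed at one of them at the final step; Feller's formula gives the generating function for the first-passage time to an absorbing barrier in a symmetric random walk with two absorbing barriers, expressed through $\lambda_1(s)$ and $\lambda_2(s)$.

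For $G_{\mu,k}$ I would argue as follows. A path in $\mathscr{G}_{n,\mu,k}$ starts at $0$, stays strictly below the upper forbidden level $\mu$ and strictly above $k-\mu$ for $1\le j\le n-1$, and is absorbed at the lower level $k-\mu$ at step $n$. Shifting the path upward by $\mu-k$ (so that the absorbing level $k-\mu$ becomes $0$) turns this into absorption at $0$ at the $n$-th trial in a symmetric gambler's-ruin problem with absorbing barriers at $0$ and $\mu$, started from the initial position $\mu-k$. Applying Feller's formula with $p=q=1/2$, initial position $z=\mu-k$ and barrier spacing $a=\mu$ should yield $(\lambda_1^{\mu}-\lambda_2^{\mu})/(\lambda_1^{2\mu-k}-\lambda_2^{2\mu-k})$ after simplification; here the numerator exponent $\mu$ comes from the barrier width and the denominator exponent $2\mu-k=\mu+(\mu-k)$ from combining the barrier width with the distance of the start to the far barrier. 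For $F_{\mu,k}$ the path starts at $k-\mu$, is confined between the forbidden level $k-\mu-1$ and the upper level $k$, and is absorbed at $k$; the two barriers are $k-\mu-1$ and $k$, a spacing of $\mu+1$, with the start at distance $1$ from the lower barrier, which after the same shift and Feller substitution ($z=1$, $a=\mu+1$) produces $(\lambda_1-\lambda_2)/(\lambda_1^{\mu+1}-\lambda_2^{\mu+1})$.

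The main obstacle is matching the index conventions precisely: Feller's boundary definitions (whether the endpoints are ``inside'' or ``absorbing'', and whether the final absorbing step is counted) must be lined up with the half-open confinement conditions $-\mu<s_k\le 0$ versus the strict-inequality conditions used in \eqref{Gpath} and \eqref{Fpath}, and one must verify the boundary cases $k=\mu$ (where $\mathscr{G}$ degenerates, since $k-\mu=0$ and the first factor must collapse to the trivial length-zero path) and the overall consistency with $A_\mu$ from Lemma~\ref{GFABC}. Once the dictionary between the path description and the gambler's-ruin parameters $(z,a)$ is fixed, substituting $p=q=1/2$ into \cite[XIV.4, (4.12), P.351]{Fel1} and simplifying the resulting expressions in $\lambda_1,\lambda_2$ is routine algebra, which I would not grind through here.
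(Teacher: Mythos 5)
Your overall strategy---shift each class to a symmetric gambler's-ruin problem with two barriers and quote the duration-of-game generating function from Feller XIV.4---is exactly the paper's, and your treatment of $\mathscr{F}_{\mu,k}$ (barriers at $k-\mu-1$ and $k$, spacing $\mu+1$, start at distance $1$ from the forbidden barrier, hence $z=1$, $a=\mu+1$) coincides with the paper's proof of (\ref{gff}).

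For $\mathscr{G}_{\mu,k}$, however, your parameter identification is wrong and the step as written would fail. A path in $\mathscr{G}_{n,\mu,k}$ is confined strictly between the levels $k-\mu$ and $\mu$, so the two barriers are $k-\mu$ (absorbing, hit at the last step) and $\mu$ (never reached), and their spacing is $\mu-(k-\mu)=2\mu-k$, not $\mu$. After the shift by $\mu-k$ the barriers sit at $0$ and $2\mu-k$ and the walk starts at $z=\mu-k$; this is what the paper feeds into Feller's formula ($z=\mu-k$, $a=2\mu-k$), giving numerator exponent $a-z=\mu$ and denominator exponent $a=2\mu-k$, i.e.\ (\ref{gfg}). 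Any correct application of the two-barrier formula with spacing $a$ produces a denominator $\lambda_{1}(s)^{a}-\lambda_{2}(s)^{a}$, so with your choice $a=\mu$ one gets $\bigl(\lambda_{1}(s)^{k}-\lambda_{2}(s)^{k}\bigr)/\bigl(\lambda_{1}(s)^{\mu}-\lambda_{2}(s)^{\mu}\bigr)$ rather than (\ref{gfg}); no ``simplification'' recovers the stated result from those parameters. Your verbal account of the exponents is also inverted: in Feller's formula the denominator exponent is the distance between the two barriers and the numerator exponent is the distance from the starting point to the non-absorbing barrier. On the positive side, your remark about the boundary case $k=\mu$, where $G_{\mu,\mu}(s)=1$ must be read as the contribution of the empty path, flags a genuine edge case that the paper itself glosses over.
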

\begin{proof}
	
	The generating function $G_{\mu,k}$ is dervied following the formula \cite[XIV.4, (4.11), P.351]{Fel1}, by shifting a path from $\mathscr{G}_{n,\mu,k}$ by $\mu-k$. Thus, we obtain a path corresponding to absorption at zero
at the $n$-th trial in a symmetric game of gamblers ruin with initial position $\mu-k$ and absorbing barriers
at $0$ and $2\mu-k$.  The
formula \cite[XIV.4, (4.11), P.351]{Fel1} for $p=1/2$, $q=1/2$, $z=\mu-k$, $a=2\mu-k$ yields (\ref{gfg}).

The generating function $F_{\mu}$ is derived following the formula \cite[XIV.4, (4.12), P.351]{Fel1} by shifting a path from $\mathscr{F}_{n,\mu,k}$ by $\mu-k+1$.  Therefore, the formula is obtained for $p=1/2$, $q=1/2$, $z=1$, $a=\mu+1$. 

\end{proof}

Since the generating function $F_{\mu,k}$ does not depend on $k$,  from now on we can omit subscript $k$ and write $F_{\mu}.$

\subsection{The first Type~II trade in initially empty book}
The path of the first Type~II trade in the initially empty order book needs to down by $\mu$ or more, and then to go up by $\mu$ steps.
Therefore, if the first trade in initially empty order book is a Type~II trade, then the path $(S(0),\ldots,S(\widetilde{T_1}))$ is a concatenation of $K\geq1$ elements from $\mathscr{B}_\mu$ and
 one element from $\mathscr{C}_\mu$. The proof is same as in the Proposition~\ref{prop:decomp}.
 
 \subsection{The first trade in initially empty book}
 \begin{proposition}
If the order book is initially empty, the generating function of the time to the first trade is given by
\begin{equation}
\label{pgfTempty}
E[z^{\widetilde{T_1}}]=\sum_{k=o}^{\mu}G_{\mu-k}(z)F_{\mu}(z)+\frac{B_{\mu}(z)}{1-B_{\mu}(z)}\cdot C_{\mu}(z).
\end{equation}
\end{proposition}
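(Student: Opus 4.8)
The plan is to split the event that the first trade occurs at time $\widetilde{T}_1$ into two disjoint and exhaustive cases according to the type of that trade, to compute the contribution of each to the generating function $E[z^{\widetilde{T}_1}]$ separately, and then to add them by the law of total probability (which on the level of generating functions amounts to adding the contributions of disjoint events). The first summand in \eqref{pgfTempty} accounts for the first trade being a Type~I trade, while the last term accounts for a Type~II trade; I read the first summand as $\sum_{k=1}^{\mu} G_{\mu,k}(z)\,F_{\mu}(z)$, with $G_{\mu,k}$ and $F_\mu$ as in Lemma~\ref{FirstTIempty}.

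For the Type~I contribution, I would condition on the level $k\in\{1,\ldots,\mu\}$ at which the first Type~I trade occurs, these being the only possible levels since the initial order sits at $\mu$. As explained around Figure~\ref{figIntTI2}, the path $(S(0),\ldots,S(\widetilde{T}_1))$ must first descend from $0$ to its running minimum $k-\mu$ (without touching level $\mu$, where the initial order sits, and without any premature trade) and then climb from $k-\mu$ up to $k$, where the order placed at the minimum is finally hit. I would make this precise by exhibiting a bijection between the paths realizing ``first trade is Type~I at level $k$'' and the concatenations $\mathscr{G}_{\mu,k}\times\mathscr{F}_{\mu,k}$: the descending part belongs to $\mathscr{G}_{\mu,k}$ (first hit of $k-\mu$, staying strictly below $\mu$), and the ascending part belongs to $\mathscr{F}_{\mu,k}$ (from $k-\mu$ up to $k$, staying strictly below $k$ and never dropping below $k-\mu$). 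Since concatenation of combinatorial classes multiplies ordinary generating functions, and the symmetric Bernoulli substitution $z\mapsto z/2$ preserves this multiplicativity, the probability generating function of a Type~I trade at level $k$ is $G_{\mu,k}(z)\,F_{\mu,k}(z)$; by Lemma~\ref{FirstTIempty} the factor $F_{\mu,k}=F_\mu$ is independent of $k$, so summing over the disjoint levels yields $\sum_{k=1}^{\mu}G_{\mu,k}(z)\,F_\mu(z)$.

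For the Type~II contribution, I would argue that the first Type~II trade in an initially empty book has exactly the same path structure as in the initially full book. Indeed, a Type~II first trade keeps the price at or below $0$ throughout (as in Lemma~\ref{down}), and at and below level $0$ the two books carry an identical, purely dynamically generated order structure; the initial orders at positive levels are never reached during such an excursion and hence are irrelevant. Therefore Proposition~\ref{prop:decomp} applies verbatim, the excursion decomposes as a sequence of $K\geq1$ blocks from $\mathscr{B}_\mu$ followed by one block from $\mathscr{C}_\mu$, i.e. $\SEQ_{\geq1}(\mathscr{B}_\mu)\times\mathscr{C}_\mu$, and by \cite[Theorem~I.1, P.27]{FS} its generating function is $\frac{B_\mu(z)}{1-B_\mu(z)}\,C_\mu(z)$, precisely the second term of \eqref{gfT1}.

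The main obstacle is the bijection in the Type~I step: one must verify that the two open-barrier conditions defining $\mathscr{G}_{\mu,k}$ and $\mathscr{F}_{\mu,k}$ encode exactly the requirements ``no trade strictly before $\widetilde{T}_1$'' and ``the first trade is Type~I at level $k$.'' Concretely, I would check that while the price stays in $(k-\mu,\mu)$ no order is ever executed (the only order reachable from above is the one at $\mu$, which is excluded), that the running minimum $k-\mu$ determines the level $k$ that is eventually traded, and that on the ascending part the constraints $k-\mu\le s_j<k$ rule out both an earlier execution at $k$ and a descent that would create a lower tradable level or trigger a Type~II trade first. Everything else — multiplicativity under concatenation, the explicit forms of $G_{\mu,k}$, $F_\mu$, $B_\mu$, $C_\mu$, and the $\SEQ_{\geq1}$ formula — is already supplied by Lemma~\ref{FirstTIempty}, Lemma~\ref{GFABC}, Proposition~\ref{prop:decomp} and \cite{FS}.
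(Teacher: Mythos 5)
Your proposal is correct and follows essentially the same route as the paper: decompose by trade type, identify the Type~I paths with the concatenations $\mathscr{G}_{\mu,k}\times\mathscr{F}_{\mu,k}$ summed over the level $k$, reuse Proposition~\ref{prop:decomp} for the Type~II term, and translate via the symbolic method of \cite{FS}. Your reading of the first summand as $\sum_{k=1}^{\mu}G_{\mu,k}(z)F_{\mu}(z)$ is the intended one (the paper's indexing there is a typo), and your explicit verification of the barrier conditions only fills in details the paper leaves implicit.
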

\begin{proof}
Let us define the set $\widetilde{\mathscr{T}}_{\mu,n}$ of all paths where the first trade in an initially empty order book occurs at step $n$.
and $\widetilde{\mathscr{T}}_\mu=\bigcup_{n\geq1}\widetilde{\mathscr{T}}_{\mu,n}$.
This set contains elements that are either Type~I trades  or Type~II trades. By Lemma~\ref{FirstTIempty} the path of the Type~I trade is the concatenation of an element from the class $\mathscr{G}_{\mu,k}$ and an element from the class $\mathscr{F}_{\mu,k}$.  The path of the Type~II trade corresponds bijectivly to the Cartesian product
of the set of finite sequences of length $1$ or more of elements in $\mathscr{B}_\mu$ times
the class $\mathscr{C}_\mu$), see Proposition~\ref{prop:decomp}. Using the symbolic notation from \cite[Section~I]{FS} this is written more clearly as
\begin{equation}
\widetilde{\mathscr{T}}_\mu=\sum_{k=o}^{\mu}\mathscr{G}_{\mu-k}(z)\mathscr{F}_{\mu}(z)\cup
\SEQ_{\geq1}(\mathscr{B}_\mu)\times\mathscr{C}_\mu.
\end{equation}
From \cite[Theorem~I.1, P.27]{FS} and the section on {\em restricted constructions}, in particular
$\SEQ_{\geq k}$, in \cite[P.30]{FS},  the probability generating function (\ref{pgfTempty}) is obtained.
\end{proof}

\section{Auxiliary material}

\subsection{Some results of excursion theory}
\label{sec:excursion}
We recall some results of excursion theory, in particular we refer to Revuz and Yor\cite[XII.2, P.480]{RY}. Let $\left(
U,\mathcal{U}\right)  $  be the measurable space of Brownian excursions, and let
$\left(  e_{t},t>0\right)  $ be the excursion process. 
Since the 
Define by $U_{\delta}=U\cup\left\{  \delta\right\}  $ the set enhanced  by the
zero-excursion (which is set equal to $\delta$) on the set where the local
time at zero is strictly increasing. Further, this set is equipped with the $\sigma
$-algebra $\mathcal{U}_{\delta}=\sigma\left(  \mathcal{U},\left\{
\delta\right\}  \right)  $. 

For a measurable subset $\Gamma$ of $\mathcal{U}%
_{\delta}$, the function
\begin{equation}
N_{t}^{\Gamma}\left(  \omega\right)  =\sum_{0<u\leq t}\mathbf{1}_{\Gamma
}\left(  e_{u}\left(  \omega\right)  \right)  
\end{equation}
is measurable.

The \emph{Ito measure} $n$ is the $\sigma$-finite measure defined on
$\mathcal{U}$ by%
\begin{equation}
n\left(  \Gamma\right)  :=E\left[  N_{1}^{\Gamma}\right]
\end{equation}
and extended to $\mathcal{U}_{\delta}$ by $n\left(  \delta\right)=0$.

Since these functions graphs are
either entirely above or below the $t$-axis, we denote by $U^+$ and $U^-$  the
subsets of the set U. Further, $n^+$ and $n^-$ are upper and lower Ito measures, i.e. restrictions of $n$ to $U^+$ and $U^-$ respectievly.

It turns out that the excursion process is a Poisson Point Process, and hence
the Ito measure is its characteristic measure. An important consequence of
this is the Master Formula, see \cite[XII, Proposition1.10]{RY} for a
general version, which states that for a positive $\mathcal{U}_{\delta}%
$-measurable function $H$ defined on $U_{\delta}$ we have
\begin{equation}\label{MasterFormula}
E\left[  H\left(  e(\omega\right)  )\right]  =\int_{U}H(u)\,n(du).
\end{equation}
\subsection{On convergence in distribution and vague convergence}
\begin{lemma}
\label{lem:vaglim}
Suppose we are given a real number $\varepsilon>0$,
a sequence of probability measures $(\mu_n)_{n\geq1}$ on $\mathcal{B}((0,\infty))$ and
a non-negative measure $\nu$ on $\mathcal{B}((0,\infty))$, not the zero measure, such that
\begin{equation}
\int_0^\infty(1\wedge x)\nu(dx)<\infty.\label{wedgenu}
\end{equation}
If $\epsilon$ is a continuity point for $\nu$ and
\begin{equation}\label{asymu}
\int_0^\infty e^{-sx}\mu_n(dx)=1-\hat\nu(s)\cdot n^{-1/2}+\mathcal{O}(n^{-1})\quad n\to\infty
\end{equation}
pointwise for all $s\geq0$, where
\begin{equation}
\hat\nu(s)=\int_0^\infty(1-e^{-sx})\nu(dx),\label{hatnu}
\end{equation}
then we have
\begin{equation}
\lim_{n\to\infty}n^{1/2}\label{lapmu}
\int_0^\varepsilon(1-e^{-sx})\mu_n(dx)=
\int_0^\varepsilon(1-e^{-sx})\nu(dx)
\end{equation}
and
\begin{equation}\label{resla}
\lim_{n\to\infty}n^{1/2}
\int_\varepsilon^\infty\mu_n(dx)=
\int_\varepsilon^\infty\nu(dx)
\end{equation}
for all $\lambda\geq0$.
\end{lemma}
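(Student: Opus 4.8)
The plan is to extract the claimed limits directly from the asymptotic hypothesis (\ref{asymu}) by subtracting off the leading constant and then isolating the two integrals via elementary transform manipulations. First I would observe that since each $\mu_n$ is a probability measure, $\int_0^\infty \mu_n(dx)=1$, so combining this with (\ref{asymu}) gives
\begin{equation}
\int_0^\infty(1-e^{-sx})\mu_n(dx)=\hat\nu(s)\cdot n^{-1/2}+\mathcal{O}(n^{-1}),\quad n\to\infty,
\end{equation}
and multiplying by $n^{1/2}$ yields $\lim_{n\to\infty} n^{1/2}\int_0^\infty(1-e^{-sx})\mu_n(dx)=\hat\nu(s)$ for every $s\geq0$. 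This is the full-line analogue of (\ref{lapmu}); the work is to split the integration region at $\varepsilon$ and show the two pieces converge separately to the corresponding pieces of $\hat\nu$.

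Next I would set up a vague-convergence argument on $(0,\infty)$. The idea is that the measures $n^{1/2}(1\wedge x)\mu_n(dx)$, or more conveniently the sub-probability-type weightings encoded by the functionals $s\mapsto n^{1/2}\int_0^\infty(1-e^{-sx})\mu_n(dx)$, converge to the corresponding functional of $\nu$. Because the family $\{1-e^{-sx}:s\geq0\}$ together with the integrability condition (\ref{wedgenu}) is rich enough to determine a measure on $(0,\infty)$ up to its behaviour near $0$ (where $1-e^{-sx}\sim sx$ controls the $x\wedge 1$ mass), I would argue that $n^{1/2}\mu_n\to\nu$ vaguely on $(0,\infty)$. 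Concretely, I would take a continuous function $f$ with compact support in $(0,\infty)$, approximate it uniformly by finite linear combinations of the $1-e^{-sx}$ (using a Stone--Weierstrass or Laplace-transform-uniqueness argument restricted to a compact interval bounded away from $0$), and pass to the limit, so that $\lim_{n\to\infty} n^{1/2}\int f\,d\mu_n=\int f\,d\nu$ for all such $f$.

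Once vague convergence on $(0,\infty)$ is in hand, both (\ref{lapmu}) and (\ref{resla}) follow by applying it to suitable test functions and using that $\varepsilon$ is a continuity point of $\nu$, i.e. $\nu(\{\varepsilon\})=0$. For (\ref{resla}) I would approximate the indicator $\mathbf{1}_{(\varepsilon,\infty)}$ from above and below by compactly supported continuous functions and use $\nu(\{\varepsilon\})=0$ to squeeze the limit, noting that mass cannot escape to $+\infty$ because the tail is controlled by the full-line limit $\hat\nu(s)$ and (\ref{wedgenu}). For (\ref{lapmu}) I would write $\int_0^\varepsilon(1-e^{-sx})\mu_n(dx)=\int_0^\infty(1-e^{-sx})\mu_n(dx)-\int_\varepsilon^\infty(1-e^{-sx})\mu_n(dx)$ and handle the second integral with the same continuity-at-$\varepsilon$ argument together with $e^{-sx}\to$ known limits on $(\varepsilon,\infty)$.

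The main obstacle I anticipate is the behaviour near $x=0$: the limiting measure $\nu$ (here the excursion-length measure with density $\propto x^{-3/2}$) has infinite total mass, so $n^{1/2}\mu_n$ does not converge weakly, only vaguely, and the mass accumulating near $0$ must be prevented from contaminating the integrals over $(0,\varepsilon)$ and $(\varepsilon,\infty)$. The factor $(1-e^{-sx})\sim sx$ in (\ref{lapmu}) is precisely what tames this singularity through the integrability condition (\ref{wedgenu}), so the delicate step is to justify the uniform approximation of test functions by exponentials on regions bounded away from $0$ while controlling the error contribution from a neighbourhood of $0$ uniformly in $n$ — this is where I expect the estimate to require the most care.
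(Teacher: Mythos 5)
Your plan is correct in substance but takes a route that is essentially the reverse of the paper's. The paper does not prove vague convergence first: it tilts $\mu_n$ by the factor $1-e^{-\lambda x}$ and normalizes, defining the \emph{probability} measures $\gamma_n(dx)=\frac{1-e^{-\lambda x}}{1-\tilde\mu_n(\lambda)}\mu_n(dx)$ and $\nu_\lambda(dx)=\frac{1-e^{-\lambda x}}{\hat\nu(\lambda)}\nu(dx)$; the hypothesis (\ref{asymu}) then gives pointwise convergence of the Laplace transforms of $\gamma_n$ to that of $\nu_\lambda$, so Feller's continuity theorem yields $\gamma_n\to\nu_\lambda$ weakly, and (\ref{lapmu}) and (\ref{resla}) drop out by evaluating the distribution function at the continuity point $\varepsilon$, applying the continuous mapping theorem to $f(x)=\frac{\hat\nu(\lambda)}{1-e^{-\lambda x}}I_{x>\varepsilon}$, and multiplying by $n^{1/2}(1-\tilde\mu_n(\lambda))\to\hat\nu(\lambda)$. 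You instead aim to establish $n^{1/2}\mu_n\to\nu$ vaguely on $(0,\infty)$ directly by Stone--Weierstrass approximation with the family $\{1-e^{-sx}\}$, and to read off both limits from that; note that this vague convergence is exactly the content of the paper's Proposition~\ref{prop:vaglim}, which the paper derives \emph{from} the lemma rather than the other way around. Both routes work, and both ultimately rest on the same two facts you isolate: the full-line limit $n^{1/2}\int(1-e^{-sx})\mu_n(dx)\to\hat\nu(s)$ and the taming of the singularity at $0$ by the factor $1-e^{-sx}$. The paper's tilting trick buys you all the hard analysis for free from the classical continuity theorem; your route is more self-contained but leaves you to execute the delicate step yourself. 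Concretely, for your approximation argument it is not enough to approximate $f$ uniformly on its compact support: you need a global bound of the form $|f(x)-g(x)|\leq\delta\,(1\wedge x)$ on all of $(0,\infty)$ (obtainable by approximating $f(x)/(1-e^{-x})$ by polynomials in $e^{-x}$ and multiplying back), combined with $\sup_n n^{1/2}\int(1\wedge x)\mu_n(dx)<\infty$, which follows from (\ref{asymu}) at $s=1$ since $1-e^{-x}\geq(1-e^{-1})(1\wedge x)$. Likewise your tightness-at-infinity claim for (\ref{resla}) needs the explicit estimate $n^{1/2}\mu_n((M,\infty))\leq(1-e^{-sM})^{-1}n^{1/2}\int(1-e^{-sx})\mu_n(dx)$ with $s=1/M$, together with $\hat\nu(s)\to0$ as $s\to0$ from (\ref{wedgenu}). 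You flag both of these as the points requiring care, and they can indeed be carried out, so I see no genuine gap --- only work left to do that the paper's tilting argument sidesteps.
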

\begin{proof}
Assumption (\ref{wedgenu}) implies that the integral in (\ref{hatnu}) is finite for all $s\geq0$.
Let us denote the integral on the left hand side of (\ref{lapmu}), which is simply the Laplace transform 
of $\mu_n$ by $\tilde\mu(s)$. The case $\lambda=0$ is trivial, so fix $\lambda>0$.

Since we assumed that $\mu_n$ lives on $(0,\infty)$ we have $0\leq\tilde\mu_n(\lambda)<1$ for 
$\lambda>0$.
Since we assumed that $\nu$ is not the zero measure, we have also $\hat\nu(\lambda)>0$.
Let us define another new measure $\nu_\lambda$ by
\begin{equation}
\nu_\lambda(dx)=\frac{1-e^{-\lambda x}}{\hat\nu(\lambda)}\nu_n(dx).
\end{equation}
Note that $\nu_\lambda$ is a probability measure. Denote by $\hat\nu_\lambda(s)$ its Laplace transform.
The asymptotics (\ref{asymu}) imply 
\begin{equation}
\lim_{n\to\infty}\tilde\gamma_n(s)=\hat\nu_\lambda(s)
\end{equation}
pointwise for all $s\geq0$. By the continuity theorem for Laplace transforms, e.g. \cite[Theorem~XIII.1.2a, P.433]{Fel2},
it follows that $\gamma_n\to\nu_\lambda$ weakly as $n\to\infty$. As $\varepsilon$ is also a continuity point
for the limit distribution $\nu_\lambda$, it follows 
\begin{equation}
\lim_{n\to\infty}\label{limdf}
\int_0^\varepsilon\gamma_n(dx)=
\int_0^\varepsilon\nu_\lambda(dx).
\end{equation}
Relation (\ref{asymu}) implies also
\begin{equation}
\lim_{n\to\infty}n^{1/2}(1-\tilde\mu_n(\lambda))=\hat\nu(\lambda).\label{limnu}
\end{equation}
Combining (\ref{limdf}) and (\ref{limnu}) yields (\ref{resla}).
No consider the function
\begin{equation}
f(x)=\frac{\hat\nu(\lambda)}{1-e^{-\lambda x}}I_{x>\varepsilon},\quad x>0.
\end{equation}
It is bounded and continuous except for the point $x=\varepsilon$. By the continuous mapping
theorem, as formulated for example in \cite[Theorem~3.2.4, P.101]{Dur} we get
\begin{equation}
\lim_{n\to\infty}
\int_\varepsilon^\infty f(x)\gamma_n(dx)=
\int_\varepsilon^\infty f(x)\nu_\lambda(dx).
\end{equation}
Using the definitions of $\gamma_n$ and $\nu_\lambda$ and (\ref{limnu}) we obtain (\ref{lapmu}).
\end{proof}
\begin{proposition}\label{prop:vaglim}
In the setting of Lemma~\ref{lem:vaglim} we have
\begin{equation}
\lim_{n\to\infty}n^{1/2}\mu_n=\nu
\end{equation}
vaguely on $(0,\infty)$.
\end{proposition}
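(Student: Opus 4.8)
The plan is to derive vague convergence from the interval mass--convergence furnished by Lemma~\ref{lem:vaglim}, by approximating an arbitrary compactly supported continuous test function with step functions whose break points avoid the atoms of $\nu$. Recall that the assertion means $n^{1/2}\int_0^\infty f\,d\mu_n\to\int_0^\infty f\,d\nu$ for every $f\in C_c((0,\infty))$, that is, for every continuous $f$ vanishing outside some $[a,b]$ with $0<a<b<\infty$.

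First I would note that hypothesis (\ref{asymu}) does not involve $\varepsilon$, and that $c>0$ is a continuity point of $\nu$ if and only if it is one of $\nu_\lambda$, because $\nu_\lambda(\{c\})=\frac{1-e^{-\lambda c}}{\hat\nu(\lambda)}\,\nu(\{c\})$ with $1-e^{-\lambda c}>0$. Hence conclusion (\ref{resla}) remains valid with $\varepsilon$ replaced by any continuity point $c$ of $\nu$, giving
\begin{equation}
\lim_{n\to\infty}n^{1/2}\mu_n((c,\infty))=\nu((c,\infty)).
\end{equation}
Since (\ref{wedgenu}) forces $\nu$ to be finite on every interval bounded away from $0$, it has at most countably many atoms in $(0,\infty)$, so its continuity points are dense. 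Subtracting the limit above at two continuity points $a<b$ yields
\begin{equation}\label{plan:interval}
\lim_{n\to\infty}n^{1/2}\mu_n((a,b])=\nu((a,b])\qquad\text{for all continuity points }a<b.
\end{equation}

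Next I would fix $f\in C_c((0,\infty))$ with $f\equiv0$ off $[a,b]$, $0<a<b<\infty$, and choose continuity points $a'<a$ and $b'>b$, so that $f$ vanishes outside $(a',b']$. Given $\eta>0$, uniform continuity of $f$ lets me pick a partition $a'=t_0<\dots<t_m=b'$ with continuity-point break points (possible by density) and mesh so small that $h:=\sum_{i=1}^m f(t_i)\,I_{(t_{i-1},t_i]}$ satisfies $\|f-h\|_\infty<\eta$. Then (\ref{plan:interval}) gives
\begin{equation}
n^{1/2}\!\int h\,d\mu_n=\sum_{i=1}^m f(t_i)\,n^{1/2}\mu_n((t_{i-1},t_i])\ \longrightarrow\ \sum_{i=1}^m f(t_i)\,\nu((t_{i-1},t_i])=\int h\,d\nu,
\end{equation}
whereas $f-h$ is supported in $(a',b']$ and $\mu_n\ge0$, so
\begin{equation}
\Bigl|\,n^{1/2}\!\int (f-h)\,d\mu_n\Bigr|\le\eta\,n^{1/2}\mu_n((a',b'])\ \longrightarrow\ \eta\,\nu((a',b']),
\end{equation}
and likewise $|\int(f-h)\,d\nu|\le\eta\,\nu((a',b'])$. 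Combining the three estimates through the triangle inequality bounds $\limsup_{n\to\infty}\bigl|n^{1/2}\int f\,d\mu_n-\int f\,d\nu\bigr|$ by $2\eta\,\nu((a',b'])$; since $\nu((a',b'])<\infty$, letting $\eta\to0$ finishes the proof.

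I expect the only delicate points to be organisational: one must ensure every endpoint and break point is a continuity point of $\nu$, so that (\ref{plan:interval}) applies and no mass escapes at the cuts, and one must control the approximation error \emph{uniformly in} $n$ --- which succeeds precisely because $n^{1/2}\mu_n((a',b'])$ converges to the finite number $\nu((a',b'])$ rather than diverging with the total mass $n^{1/2}$. The conceptual point worth flagging is that the divergence of $\nu$ near $0$ (e.g.\ the density $x^{-3/2}$ in the application) is exactly why only vague, and not weak, convergence can hold on the open half--line. Alternatively, one could take (\ref{plan:interval}) as input and simply invoke the standard equivalence between vague convergence and convergence of all relatively compact intervals with continuity-point endpoints.
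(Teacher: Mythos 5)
Your proof is correct, but it takes a genuinely different route from the paper's. The paper's proof is really a continuation of the proof of Lemma~\ref{lem:vaglim}: it reuses the tilted probability measures $\gamma_n$ and $\nu_\lambda$ constructed there, tests the weak convergence $\gamma_n\to\nu_\lambda$ (obtained from the Laplace continuity theorem) against the function $g(x)=\hat\nu(\lambda)f(x)/(1-e^{-\lambda x})$ --- which is bounded and continuous precisely because $f$ has compact support in $(0,\infty)$ and so vanishes near $0$ --- and then undoes the tilt using (\ref{limnu}). You instead treat the lemma as a black box: you observe that the hypothesis (\ref{asymu}) does not involve $\varepsilon$, so conclusion (\ref{resla}) holds at every continuity point of $\nu$, upgrade this to convergence of the masses of half-open intervals with continuity-point endpoints, and finish with a step-function approximation of $f$, the error being controlled because $n^{1/2}\mu_n((a',b'])$ converges to the finite limit $\nu((a',b'])$. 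Your argument is longer but more elementary and self-contained relative to the \emph{statement} of the lemma (it essentially reproves the standard equivalence between vague convergence and convergence on relatively compact continuity intervals), whereas the paper's is a two-line corollary of the machinery inside the lemma's proof. The only points to tidy up are cosmetic: fix a convention for whether $\int_\varepsilon^\infty\mu_n(dx)$ includes the endpoint (the distinction washes out in the limit since weak convergence of $\gamma_n$ gives both one-sided versions at continuity points), and note that your remark about continuity points of $\nu_\lambda$ versus $\nu$ is not actually needed, since the lemma's hypothesis is phrased in terms of $\nu$ alone.
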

\begin{proof}
Suppose $f$ is a continuous function with compact support in $(0,\infty)$.
Define $g(x)=\hat\nu(\lambda)f(x)/(1-e^{-\lambda x}$ for $x>0$.
Since $f$ vanishes is some neighbourhood of $x=0$, it follows that $g$ is a bounded continuous function.
Thus we have from weak convergence
\begin{equation}
\lim_{n\to\infty}
\int_0^\infty g(x)\gamma_n(dx)=
\int_0^\infty g(x)\nu(dx),
\end{equation}
which can be rewritten with $(\ref{limnu})$ as
\begin{equation}
\lim_{n\to\infty}
\int_0^\infty f(x)n^{1/2}\mu_n(dx)=
\int_0^\infty f(x)\nu(dx),
\end{equation}
thus showing vague convergence.
\end{proof}

\section{Acknowledgment}
The authors would like to thank Thorsten Rheinl\"ander and Sabine Sporer,
whose work motivated the present paper.


\bibliographystyle{alpha}

\newcommand{\etalchar}[1]{$^{#1}$}

\end{document}